\documentclass[11pt]{article}
\usepackage[T1]{fontenc}
\usepackage{lmodern}
\usepackage{amsmath}
\usepackage{amsfonts}
\usepackage{amssymb}
\usepackage{amsthm}
\usepackage{graphicx}
\usepackage{etoolbox,xstring,mfirstuc,textcase}
\usepackage{empheq}
\usepackage{indentfirst}
\usepackage{cite} 
\usepackage{mathrsfs}
\usepackage{cases}
\usepackage{graphics}
\usepackage{xcolor}  
\usepackage{bm}
\usepackage{changes}
\usepackage{lineno}
\usepackage{cases}
\usepackage{enumerate}
\usepackage{enumitem}
\definechangesauthor[name={R1 cusse}, color=orange]{r1}
\definechangesauthor[name={Ty cusse}, color=purple]{ty}
\usepackage[colorlinks=true,linkcolor=blue,citecolor = blue]{hyperref}
\newtheorem{theorem}{\textbf{Theorem}}[section]
\newtheorem{lemma}{\textbf{Lemma}}[section]
\newtheorem{proposition}{\textbf{Proposition}}[section]

\newtheorem{remark}{\textbf{Remark}}[section]

\allowdisplaybreaks[4] 
\numberwithin{equation}{section}

\def \u {\boldsymbol{u}}
\def \D {\mathcal{D}}
\def \v {\boldsymbol{v}}

\def \V {\boldsymbol{V}_{\sigma}}
\def \H {\bm{H}}

\def \d {\: \mathrm{d}}
\def \dt {\: \mathrm{d}t}
\def \dx {\: \mathrm{d}x}
\def \G {\mathcal{G}}
\def \tot {\mathrm{tot}}

\begin{document}
\title{Global Weak Solutions of a Navier--Stokes--Cahn--Hilliard System for Incompressible Two-phase Flows with Thermo-induced Marangoni Effects
\footnote{Published as L.-X. Chen and H. Wu, Global weak solutions of a Navier--Stokes--Cahn--Hilliard system for incompressible two-phase flows with thermo-induced Marangoni effects, Advances in Differential Equations, \textbf{31}(11\&12) (2026), 989--1046.}}
	
\author{
{Lingxi Chen}\footnote{School of Mathematical Sciences,
Fudan University, Handan Road 220, Shanghai 200433, P. R. China.
Email:  \texttt{chenlingxi@gmail.com}}
\ \ and \ \
{Hao Wu}\footnote{Corresponding author. School of Mathematical Sciences,
Fudan University, Handan Road 220, Shanghai 200433, P. R. China.
Email:  \texttt{haowufd@fudan.edu.cn}}
}
	
\date{}
	
\maketitle

\begin{abstract}
\noindent
We study a diffuse-interface model that describes the dynamics of two-phase incompressible flows driven by the thermo-induced Marangoni effect. The hydrodynamic system consists of the Navier--Stokes equations for the fluid velocity, the convective Cahn--Hilliard equation for the phase-field variable, and a convective heat equation for the (relative) temperature. For the initial-boundary value problem in two and three dimensions with variable viscosity, mobility, thermal diffusivity, and a physically relevant singular potential, we establish the existence of global weak solutions. The proof relies on an implicit-explicit time discretization scheme that preserves the $L^\infty$-bounds of both the phase-field variable and the temperature. When the spatial dimension is two, we prove the uniqueness of weak solutions for the case with matched densities under suitable assumptions on the initial temperature, mobility, and thermal diffusivity.
\medskip \\
\noindent
\textbf{Keywords:} Two-phase flow, Marangoni effect, thermo capillarity, Navier--Stokes equations, Cahn--Hilliard equation, global weak solution, existence, uniqueness.
\medskip \\
\noindent
\textbf{AMS Subject Classifications:} 35Q35; 35K35; 35D30; 35A01; 35A02.
\end{abstract}

\section{Introduction}
\label{Introduction}

The Marangoni effect, named after Carlo Marangoni,   describes the fluid motion generated along an interface due to gradients in surface tension \cite{Marangoni}. This inhomogeneity of surface tension typically arises from spatial variations in temperature (thermocapillarity) or solute concentration (solutocapillarity) \cite{Marangoni-coffee,Marangoni_turbulence}. When thermal effects are considered, for most fluids, the surface tension decreases with increasing temperature \cite{Marangoni_review}, and consequently, the resulting tangential stress induces a convective flow from the warmer regions (lower surface tension) to the cooler regions (higher surface tension). This ubiquitous phenomenon was recognized as a primary driving mechanism that contributes to fluid motion alongside buoyancy effects in the classic B\'{e}nard experiments \cite{Benard1901}. Marangoni convection can influence the spreading behavior near interfaces and lead to the appearance of dynamic organized patterns. It has become a fundamental subject of research, with diverse applications in crystal growth/welding, electron beam melting, nanotechnology, and biology (see, e.g., \cite{Marangoni_crystal,Marangoni_welding,Marangoni_electron,NB20}).

In this study, we consider the evolution of a non-isothermal fluid mixture with two incompressible immiscible viscous Newtonian fluids under the influence of thermally induced surface tension gradients. In the sharp interface formulation (see, e.g., \cite{sharp1,sharp2}), the transition region between immiscible constituents is idealized as a zero-thickness hypersurface separating two distinct bulk domains. Although this treatment provides a clear mathematical representation of phase boundaries, it becomes challenging during topological changes such as droplet pinch-off and coalescence, where vanishing length scales can generate singular interfacial dynamics, see \cite{Eggers1997,Charles1960}.
This difficulty has motivated alternative interfacial descriptions that can regularize the sharp-boundary idealization and provide a more flexible framework for describing interfacial deformation and topological transitions. Phase-field models have emerged as an efficient method for studying interfacial dynamics and phase transitions in multi-component fluids, see \cite{phasefield1,phasefield4} and the references therein. In this approach, the sharp interface is regularized as a thin transition layer where physical variables vary continuously but steeply, i.e., the so-called diffuse interface. The diffuse interface regularization avoids explicit interface tracking while naturally incorporates thermodynamic driving forces, facilitating both the mathematical analysis and numerical implementation for multi-phase flows \cite{phasefield1,phasefield2,phasefield3}.

Let $\Omega \subset \mathbb{R}^d$ ($d = 2$ or $3$) be a bounded domain with a $C^3$-boundary $\partial \Omega$. We consider the following hydrodynamic phase-field system:
\begin{align}
	&\partial_t(\rho(\phi)\u) + \mathrm{div}(\u \otimes(\rho(\phi)\u +\mathbf{J})) - \mathrm{div}\,(2 \nu(\phi,\theta) D \u) + \nabla p
\notag \\
&\qquad = - \mathrm{div}\, \boldsymbol{\sigma} + \bm{f}_{\mathrm{b}}(\phi, \theta),
	\label{NSCHM}\\
	&\mathrm{div}\, \u = 0,
	\label{INCOMPRESS}\\
	&\partial_t \phi +\u \cdot \nabla \phi = \mathrm{div} (m(\phi,\theta) \nabla \mu),
	\label{CH1}\\
	&\mu = -\Delta \phi + W^{\prime}(\phi),
	\label{CH2}\\
	&\partial_t \theta +\u \cdot \nabla \theta=  \mathrm{div}\,(\kappa(\phi,\theta) \nabla \theta),
	\label{BOUSSINESQ}
\end{align}	
in $\Omega \times (0,\infty)$, subject to the boundary and initial conditions:
\begin{alignat}{4}
    &\u=\mathbf{0}, \quad
	&&\partial_{\mathbf{n}} \phi = \partial_{\mathbf{n}} \mu =0, \quad
	&&\theta=\theta_{\mathrm{b}}, \quad
	&&\text{on} ~ \partial \Omega \times (0,\infty),
	\label{boundary}\\
	&\u|_{t=0}=\u_0, \quad
	&&\phi|_{t=0}=\phi_0, \quad
	&&\theta|_{t=0} = \theta_0, \quad
	&&\text{in} ~ \Omega.
	\label{initial}
\end{alignat}
In \eqref{boundary}, $\mathbf{n}=\mathbf{n}(x)$ denotes the outward unit normal vector on $\partial\Omega$ and $\partial_{\mathbf{n}}$ denotes the outward normal derivative on the boundary, $\theta_{\mathrm{b}}=\theta_{\mathrm{b}}(x)$ is a given function on $\partial\Omega$.
The state variables of the system \eqref{NSCHM}--\eqref{BOUSSINESQ} are denoted by $(\u, p, \phi, \mu, \theta)$. More precisely, $\u:\Omega\times(0,\infty)\to\mathbb{R}^d$ denotes the volume-averaged velocity of the binary fluid mixture and $p:\Omega\times(0,\infty)\to\mathbb{R}$ the pressure, which are governed by a modified Navier--Stokes system  \eqref{NSCHM}--\eqref{INCOMPRESS}; $\phi:\Omega\times(0,\infty)\to [-1,1]$ is the phase-field variable and $\mu:\Omega\times(0,\infty)\to\mathbb{R}$ the chemical potential, together forming the convective Cahn--Hilliard equation \eqref{CH1}--\eqref{CH2}; and $\theta:\Omega\times(0,\infty)\to\mathbb{R}$ denotes the relative temperature (with respect to a given constant ambient temperature), which satisfies the convective heat transport equation \eqref{BOUSSINESQ}.

The coupled system \eqref{NSCHM}--\eqref{initial} presents a nontrivial coupling between fluid dynamics, phase separation process, and thermal convection, leading to complex interfacial phenomena \cite{Marangoni_interface,Marangoni_turbulence}. It not only generalizes the phase-field model for thermo-induced Marangoni effects derived in \cite{EnVarA1,EnVarA2} via the energetic variational approach, but also incorporates the thermodynamically consistent framework proposed in \cite{AGGmodel} for the general scenario with unmatched densities, based on the utilization of a volume-averaged velocity $\bm{u}$ that keeps the binary fluid mixture incompressible (cf. \eqref{INCOMPRESS}). The phase-field variable $\phi$ serves as a conserved order parameter that represents the difference in volume fractions of the fluid mixture, such that $\{\phi=-1\}$ represents fluid $1$ and $\{\phi=1\}$ represents
fluid $2$. Then the average density $\rho$ is assumed to be the typical linear form
\begin{align}
\rho(\phi)=\frac{\rho_2-\rho_1}{2}\phi+\frac{\rho_1+\rho_2}{2},
\label{density}
\end{align}
where the positive constants $\rho_1$, $\rho_2$ denote the homogeneous positive density of the unmixed components of the fluid. For simplicity, the density variation with respect to the temperature is explicitly considered in the buoyancy force $\bm{f}_{\mathrm{b}}$ and assumed to satisfy the linearized thermal expansion approximation (see e.g., \cite{EnVarA2}), that is,
\begin{align}
\bm{f}_{\mathrm{b}}(\phi, \theta) = -\rho(\phi)(1-\alpha\theta)g \mathbf{e}_d.
\label{bouy}
\end{align}
Here, $\alpha$ is the coefficient of thermal expansion, $g$ is the gravitational acceleration, and $\mathbf{e}_d$ denotes the unit upward vector, i.e., $\mathbf{e}_2=(0,1)^T$, $\mathbf{e}_3=(0,0,1)^T$. In the equation of momentum balance \eqref{NSCHM}, the matrix-valued function $D \u := \frac{1}{2}(\nabla \u + \nabla \u^T)$ denotes the symmetric gradient of $\u$, while the relative flux related to the diffusion of the fluid components is given by
\begin{align}
\mathbf{J}=-\frac{\rho_2-\rho_1}{2}m(\phi,\theta)\nabla \mu.
\label{Jflux}
\end{align}
The Cauchy stress tensor $\boldsymbol{\sigma}$ is defined as
\begin{align}
\label{Cauchy_stress_tensor}
	\boldsymbol{\sigma} =  \lambda(\theta) ( \nabla \phi \otimes \nabla \phi ) +  \lambda(\theta)
	\left( \frac{1}{2}|\nabla \phi|^2 + W(\phi)\right)  \mathbb{I}_d,
\end{align}
where $\mathbb{I}_d$ denotes the $d$-dimensional unit matrix, and the temperature-dependent surface tension coefficient $\lambda$ follows the empirical E\"{o}tv\"{o}s law:
\begin{align}
	\lambda(\theta) = \lambda_0(a - b \theta), \label{Eotvos}
\end{align}
with constant coefficients $\lambda_0,a,b>0$. Since the surface tension depends on temperature, its spatial variation along the interface induces tangential stresses that drive interfacial fluid motion, i.e., the Marangoni convection. The mixing energy of binary fluids is given by the classical Ginzburg--Landau type free energy (cf. \cite{phasefield1,Cahn1958})
\begin{align}
\label{mix}
E_{\mathrm{mix}}(\phi)= \int_\Omega \frac12|\nabla \phi|^2+ W(\phi)\,\mathrm{d}x,
\end{align}
in which the gradient term contributes to the free-energy excess of the interfacial region and the nonlinear function $W=W(\phi)$ denotes the bulk energy density with a double well structure. Here, we set the width of the diffuse interface to $1$ for simplicity, since we do not consider the sharp interface limit in this study. A physically relevant example of $W$ is the logarithmic (Flory--Huggins) potential \cite{Cahn1958}:
\begin{align}
\label{Wphi}
	W(\phi) = \frac{A}{2} \left[ (1+\phi)\ln(1+\phi) + (1-\phi)\ln(1-\phi) \right] - \frac{A_c}{2} \phi^2, \quad \phi \in (-1,1),
\end{align}
with constant coefficients $A, A_c$ satisfying $0 < A < A_c$. The chemical potential $\mu$ in the Cahn--Hilliard equation \eqref{CH1}--\eqref{CH2} is defined as the variational derivative of the mixing energy $E_{\mathrm{mix}}$ with respect to $\phi$ (subject to the homogeneous Neumann boundary condition $\partial_\mathbf{n}\phi=0$). Its gradient provides a driving force for the diffusion mechanism \cite{phasefield2,CHreview}. The heat equation \eqref{BOUSSINESQ} gives a simplification of the thermal energy equation. In general, temperature dependent extensions of the Cahn--Hilliard equation for phase separation processes are rather involved, since different transport properties of the temperature and relations of the free energy can lead to different dynamical models, see \cite{GL2015,Liuextend,Wang2020,Sun2024} and the references therein.

In this study, we allow physical coefficients such as fluid viscosity $\nu$, diffusion mobility $m$ and thermal diffusivity $\kappa$ to depend both on the order parameter $\phi$ and temperature $\theta$. The temperature dependence of the fluid viscosity and thermal diffusivity is physically important in the investigation of the detailed motion in certain non-isothermal flows, see, for instance, \cite{LB96} and the references therein. On the other hand, for multi-phase fluids, the dependence of structural coefficients on the mixture composition is physically meaningful and should be taken into account in order to accurately model the complex fluid dynamics, since different components may have distinct physical properties, see, e.g., \cite{EnVarA1,viscosity-phi}.

Several simplified versions of the coupled system \eqref{NSCHM}--\eqref{initial} have been studied in the literature. For instance, assuming matched densities, constant mobility, and only temperature dependence of the viscosity and thermal diffusivity, in \cite{me} the author proved the existence and uniqueness of global weak/strong solutions to the initial boundary value problem in two dimensions. When the Marangoni effect is neglected, still assuming matched densities and constant coefficients, problem \eqref{NSCHM}--\eqref{initial} reduces to the so-called Cahn--Hilliard--Boussinesq system, which has been extensively analyzed. We refer to \cite{Zhao_regularity,MBoussinesq,Zhao_longtime1,Zhao_longtime2} for results on well-posedness, regularity, and longtime behavior of solutions. Research on related models such as the Cahn--Hilliard--Oberbeck--Boussinesq system can be found in \cite{Oberbeck}, where weak and very weak solutions in two dimensions were investigated. When the Cahn--Hilliard equation is replaced by a second order Allen--Cahn equation (with a regular potential like $W(\phi)=\frac14(\phi^{2}-1)^2$), existence of global weak solutions, existence and uniqueness of strong solutions and long-time behavior of globally bounded solutions have been analyzed under suitable assumptions on the structural data, see \cite{HWUXX2013,HW2017,lopes-AC,lopes18} for detailed discussions. In addition, the local-in-time existence of weak solutions was established in \cite{lopes} for the Allen--Cahn variant with a temperature-dependent interfacial thickness. Recently, the authors in \cite{Abels2025} investigated a non-isothermal Navier--Stokes--Allen--Cahn system for  incompressible two-phase flows of equal densities. Despite the high nonlinearity in the thermodynamically consistent heat equation, they established local well-posedness of the initial-boundary value problem and the existence of global ``entropic'' weak solutions. In addition, they analyzed the sharp interface limit, showing that an entropic weak solution to the phase-field model converges to a distributional (or BV) solution to a non-isothermal Navier--Stokes/mean curvature flow.

All of the aforementioned works assumed matched densities for the fluid components and sometimes adopted the Boussinesq approximation for the thermally induced variation of the density. On the other hand, when the thermal effect is neglected, problem \eqref{NSCHM}--\eqref{initial} reduces to the Abels--Garcke--Gr\"{u}n system (AGG for short, see \cite{AGGmodel}), which is a thermodynamically consistent and frame-indifferent diffuse interface model for isothermal viscous incompressible two-phase flows with unmatched densities. We refer to \cite{JMFM} for the existence of global weak solutions in two and three dimensions for a non-degenerate mobility $m=m(\phi)$. The corresponding result in the case of degenerate mobility was established in \cite{AGG-P}. Concerning local strong well-posedness in three dimensions, we refer to \cite{AGG3dstrong}, see also \cite{AW21} for the case with a regular potential. When the spatial dimension is two, the existence of strong solutions locally in time for bounded domains and globally in time for periodic boundary conditions was obtained in \cite{AGG2dstrong}. Finally, we refer to \cite{longtime_ann} for global regularity and asymptotic stabilization of global weak solutions in three dimensions as well as the existence of global strong solutions in two-dimensional bounded domains. Extensions to multi-component systems can be found in the recent work \cite{mobility-multi-phase}.

To the best of our knowledge, there is no analytical result on the hydrodynamic system \eqref{NSCHM}--\eqref{initial}, which extends the isothermal AGG model by coupling with the energy transport equation \eqref{BOUSSINESQ} and incorporating thermal effects on both density (buoyancy forces) and surface tension (Marangoni effects). In this study, we establish the following results for problem \eqref{NSCHM}--\eqref{initial} in the general scenario with unmatched densities and variable viscosity, mobility, as well as thermal diffusivity:
\begin{itemize}
 \item[(1)] the existence of global weak solutions in both two and three dimensions, which are uniformly bounded in time (see Theorem \ref{weaksolution-3d});
 \item[(2)] the uniqueness of global weak solutions in two dimensions, under suitable assumptions on the structural data (see Theorem \ref{weaksolution-2d}).
\end{itemize}

The problem \eqref{NSCHM}--\eqref{initial} satisfies two fundamental properties, i.e., mass conservation and energy balance, which serve as the basis for the subsequent analysis. Integrating \eqref{CH1} over $\Omega$, using the boundary conditions \eqref{boundary} and the incompressibility condition \eqref{INCOMPRESS}, after integration by parts, we obtain
\begin{align}
\frac{\mathrm{d}}{\mathrm{d}t} \int_\Omega\phi \,\mathrm{d}x =0,\quad \forall\, t>0,\label{mass}
\end{align}
which implies that the mass of the binary fluids is conserved for all time.
Next, for sufficiently smooth solutions, multiplying \eqref{NSCHM} by $\bm{u}$, \eqref{CH1} by $\lambda_0 a \mu$ and \eqref{BOUSSINESQ} by $\theta$, integrating over $\Omega$ (assuming for simplicity here $\theta_\mathrm{b}=0$), and adding the resultants together, we formally arrive at the following energy identity
\begin{align}
&\frac{\mathrm{d}}{\mathrm{d} t} \mathcal{E}_{\mathrm{tot}} (\bm{u}, \phi, \theta) + \int_\Omega \big( 2\nu(\phi, \theta)|D\bm{u}|^2 + \lambda_0 am(\phi,\theta)|\nabla \mu|^2 + \kappa(\phi, \theta)|\nabla \theta|^2\big)\,\mathrm{d}x
\notag \\
&\quad = \int_\Omega \mathrm{div}\big[\lambda_0 b\theta(\nabla \phi\otimes \nabla \phi)\big]\cdot \bm{u}\,\mathrm{d}x
+ \int_\Omega \bm{f}_{\mathrm{b}}(\phi, \theta)\cdot \bm{u}\,\mathrm{d}x,\quad \forall\, t>0,
\label{energy-a}
\end{align}
where the total energy $\mathcal{E}_{\mathrm{tot}}$ is given by
\begin{align}
\label{energy-b}
\mathcal{E}_{\mathrm{tot}} (\bm{u}, \phi, \theta)
=\int_\Omega \left[\frac12\rho(\phi)|\bm{u}|^2 + \lambda_0 a\left(\frac12|\nabla \phi|^2+ W(\phi)\right) + \frac12 |\theta|^2\right]\,\mathrm{d}x.
\end{align}
We observe that the Marangoni-driven capillary term $\mathrm{div} (\lambda(\theta) \nabla \phi \otimes \nabla \phi)$ introduces a highly nonlinear coupling between the temperature and the phase-field variable, which significantly complicates the analysis. In view of \eqref{energy-a}, the Marangoni effect and the buoyancy force cause the loss of a standard energy dissipation structure for the system, compared to the corresponding isothermal case \cite{AGGmodel}. This poses significant difficulties in obtaining global weak solutions that are expected to be uniformly bounded in time. In addition, though physically meaningful, the nonhomogeneous Dirichlet boundary condition for the temperature introduces additional obstacles to the derivation of the necessary estimates. To achieve our goal, we adopt an implicit-explicit time-discretization scheme that maintains the uniform boundedness of the temperature along time steps (see \eqref{time-discretization-u}--\eqref{time-discretization-theta}). This discretization scheme is inspired by \cite{JMFM} for the isothermal AGG model. By working directly with a singular potential like \eqref{Wphi}, it also keeps the physical range of the phase-field variable, i.e., $\phi\in [-1,1]$. The uniform $L^{\infty}$-bounds of the approximate solutions $(\phi,\theta)$ help us to handle some troublesome terms that are highly nonlinear. Furthermore, by exploring the coupling structure of the system, we derive a suitable discrete energy inequality that enables us to control the energy growth and obtain uniform-in-time estimates for the approximate solutions (see \eqref{discrete-energy-inequality}). Our analysis does not require any assumption of smallness on the initial temperature, thereby improving the conditions in \cite{HWUXX2013,HW2017} for the related Navier--Stokes--Allen--Cahn system (with matched densities). In the two-dimensional setting, we obtain further regularity property of $\theta$ using the H\"older estimate \eqref{GiorginiHolder} (cf. \cite{me}). Based on this improved regularity property, we can establish the uniqueness of global weak solutions by assuming that the fluid mixture has matched densities, the mobility depends only on the concentration, and the thermal diffusivity depends solely on the temperature. Whether uniqueness holds under more general assumptions remains an open question; we refer the readers to Remarks \ref{uniqueness-AGG} and \ref{weaksolution-2d'} for detailed discussions.

Building on the existence of global weak solutions, it is natural to study the long-time behavior. We expect that in three dimensions, results similar to those in \cite{longtime_ann} can be achieved (i.e., the longtime separation property, eventual regularity, weak-strong uniqueness, and convergence to a single equilibrium), provided that the mobility is assumed to be a positive constant. Moreover, in light of the recent contribution \cite{GP2025}, we can apply the method therein to prove that every global weak solution converges to a single equilibrium as $t \to \infty$ in the general case of a non-degenerate and non-constant mobility. These issues will be addressed in a forthcoming study.

\smallskip
\textit{Plan of the paper.} In Section \ref{main-res}, we present the notation, mathematical tools, and main results. Section \ref{section-of-weak-3d} is devoted to the proof of the existence of a global weak solution in both two and three dimensions. In Section \ref{section-of-weak-2d}, we prove the existence and uniqueness of global weak solutions with improved regularity for the temperature in two dimensions.

\section{Main Results}
\label{main-res}

\subsection{Notation}
\label{Preliminaries}

We first introduce the notation and conventions used throughout this work.

Let $B$ be a Banach space whose norm is denoted by $\|\cdot\|_{B}$. We denote its dual by $B^{\prime}$ and the dual product by $\langle \cdot, \cdot \rangle_{B}$. If $B$ is a Hilbert space, we write $(\cdot,\cdot)_B$ for the associated inner product. The bold letter $\bm{B}$ denotes the generic space of vectors or matrices, with each component belonging to $B$.
Let $I \subset [0, \infty]$ be an open set. We denote by $L^p(I; B)$ the space that consists of Bochner measurable $p$-integrable functions (if $p \in [1,\infty)$) or essentially bounded functions (if $p =\infty$). In the case $I=(t_1, t_2)$, we simply write $L^p(t_1,t_2;B)$.
Moreover, $L^p_{\mathrm{uloc}}([t_0,\infty);B)$ denotes the uniformly local variant of $L^p(t_0,\infty;B)$ consisting of all strongly measurable functions $f:[t_0,\infty)\to B$ such that
$$
\|f\|_{L^p_{\mathrm{uloc}}([t_0,\infty);B)}=\sup_{t\geqslant t_0}\|f\|_{L^p(t,t+1;B)}<\infty.
$$
For any $T\in (t_0,\infty)$, we simply have $L^p_{\mathrm{uloc}}([t_0,T);B)= L^p(t_0,T;B)$. For $p\in [1,\infty]$, $T\in (0,\infty)$, $f\in W^{1,p}(0,T;B)$ if and only if $f$, $\frac{\mathrm{d}}{\mathrm{d}t} f\in  L^{p}(0,T;B)$, where $\frac{\mathrm{d}}{\mathrm{d}t} f$ denotes the vector-valued distributional derivative of $f$. The uniformly local space
$W^{1,p}_{\mathrm{uloc}}([t_0,\infty);B)$ is defined by replacing $L^p(0,T; B)$ with
$L^p_{\mathrm{uloc}}([t_0,\infty);B)$. If $p=2$, we simply set $H^1(0,T;B) = W^{1,2}(0,T;B)$ and
$H^1_{\mathrm{uloc}}([t_0,\infty);B) = W^{1,2}_{\mathrm{uloc}}([t_0,\infty);B)$. Let $I=[0,T]$ if $T \in (0,\infty)$ or $I=[0,\infty)$ if $T=\infty$. Then $C(I;B)$ denotes the Banach space of all continuous functions $f:I \to B$ equipped with the supremum norm. We also denote by $C_{w}(I;B)$ the topological vector space of all weakly continuous functions $f:I \to B$. In addition, we denote by $C^\infty_0(0, T; B)$ the abstract vector space of all smooth functions $f:(0, T)\to B$ with $\mathrm{supp} f\subset\subset (0,T)$. Given any two matrices $X=(X_{ij})_{i,j=1}^{d}$, $Y=(Y_{ij})_{i,j=1}^{d}\in \mathbb{R}^{d\times d}$, we have
$(XY)_{ik}=\sum_{j=1}^dX_{ij}Y_{jk}$. Then we denote the Frobenius inner product by
$X:Y=\mathrm{trace}(Y^TX)=\sum_{i,k=1}^dX_{ik}Y_{ik}$ and its associated norm $|X|=\sqrt{X:X}$.

Let $\Omega \subset \mathbb{R}^d$, $d\in \{2,3\}$, be a bounded domain with sufficiently smooth boundary $\partial \Omega$. We write $C(\overline{\Omega})$ (or $C^{\alpha}(\overline{\Omega})$ with $\alpha\in (0,1)$) for the set of continuous (or $\alpha$-H\"{o}lder continuous) functions defined in the closure of $\Omega$. The symbol $C_0^{\infty}(\Omega)$ refers to the space of functions that are differentiable infinitely many times
and compactly supported in $\Omega$.
For the standard Lebesgue and Sobolev spaces in $\Omega$, we use the notation $L^{p}(\Omega)$, $W^{k,p}(\Omega)$ for any $p \in [1,\infty]$ and $k\in \mathbb{N}$,
equipped with the corresponding norms $\|\cdot\|_{L^{p}(\Omega)}$, $\|\cdot\|_{W^{k,p}(\Omega)}$, respectively. Besides, $W^{k,p}_0(\Omega)$ represents the closure of $C^\infty_0(\Omega)$ in $W^{k,p}(\Omega)$, and $W^{-k,p^\prime}(\Omega) = (W^{k,p}_0(\Omega))^\prime$
refers to the corresponding dual space, where $p^\prime$ is the dual exponent of $p$.
When $p = 2$, these spaces are Hilbert spaces and we use the standard conventions $H^{k}(\Omega) := W^{k,2}(\Omega)$, $H^{k}_0(\Omega) := W^{k,2}_0(\Omega)$ and $H^{-1}(\Omega)=(H^1_0(\Omega))'$.
For $s\geqslant 0$ and $p \in [1,\infty)$, we denote by $H^{s,p}(\Omega)$ the Bessel-potential spaces and by $W^{s,p}(\Omega)$ the Slobodeckij spaces. We have $H^{s,2}(\Omega) = W^{s,2}(\Omega)$
for all $s$, but for $p\neq 2$ the identity $H^{s,p}(\Omega) = W^{s,p}(\Omega)$ is only true if
$s\in \mathbb{N}$. In addition, for $s\in \mathbb{N}$, $H^{s,p}(\Omega)$ and $W^{s,p}(\Omega)$  coincide with the usual Sobolev spaces. The corresponding function spaces on the
boundary $\partial\Omega$ are defined using local charts. For convenience, we define
$$
H_{\mathbf{n}}^2(\Omega) =
\{ f \in H^2(\Omega) : \partial_{ \mathbf{n}} f = 0 \text{ on } \partial \Omega \},
$$
and denote the inner product and norm of $L^2(\Omega)$ by $(\cdot, \cdot)$ and $\|\cdot\|$ without ambiguity.

For every $f\in (H^1(\Omega))^{\prime}$, its generalized mean value over $\Omega$ is defined as
$\overline{f}=|\Omega|^{-1}\langle f,1\rangle_{H^1(\Omega)}$. If $f\in L^1(\Omega)$, then the mean value is given by $\overline{f}=|\Omega|^{-1}\int_\Omega f \,\mathrm{d}x$. For any $M\in \mathbb{R}$, we set
$$
 L^2_{(M)}(\Omega):= \left\{f\in L^2(\Omega):\overline{f} =M\right\}.
$$
The orthogonal projection $P_0: L^2(\Omega)\to L^2_{(0)}(\Omega)$ is defined as $P_0 f= f- \overline{f}$. Besides, we introduce the linear spaces
\begin{align*}
&
V_{(0)} := \left\{f \in H^1(\Omega) : \overline{f} = 0	\right\},
\quad
V_{(0)}^{\prime} := \{f \in (H^1(\Omega))^{\prime} : \overline{f} =0\},
\end{align*}
and recall the well-known Poincar\'{e}--Wirtinger inequality:
\begin{equation*}
\left\|f-\overline{f}\right\|\leqslant C_{PW} \|\nabla f\|,\quad \forall\,
f\in H^1(\Omega),
\end{equation*}
where the positive constant $C_{PW}$ depends only on $\Omega$. Consider the Neumann problem
\begin{align*}
	\begin{cases}
		-\Delta u = f, \quad &\text{in}~\Omega, \\
		\partial_\mathbf{n} u = 0, \quad &\text{on}~\partial \Omega.
	\end{cases}
\end{align*}
We define the solution operator $\G : V_{(0)}^{\prime} \to V_{(0)}$ in the following sense:
for every $f \in V_{(0)}^{\prime}$, $\G f \in V_{(0)}$ is the unique function satisfying
\begin{align}\label{Pre-G}
	\langle f , v \rangle_{H^1(\Omega)} = ( \nabla \G f, \nabla v), \quad \forall\, v \in V_{(0)}.
\end{align}
For $f \in V_{(0)}^{\prime}$, it is easy to check that $\| \nabla \G f \|$ is a norm on $V_{(0)}^{\prime}$ equivalent to the standard one. Thus, we also write it as $\|\cdot\|_{V_{(0)}^{\prime}}$. In order to handle the nonconstant mobility,  we also consider the following Neumann problem
\begin{align}\label{Gq}
	\begin{cases}
		-\mathrm{div} ( m(q) \nabla u) = f, \quad &\text{in}~\Omega, \\
		m(q) \partial_\mathbf{n} u = 0, \quad &\text{on}~\partial \Omega.
	\end{cases}
\end{align}
Here, $q: \Omega \to [-1,1]$ is a measurable function.
Define the solution operator $\G_q$ as
\begin{align}\label{Pre-Gq}
	\langle f , v \rangle_{H^1(\Omega)} = ( m(q) \nabla \G_q f, \nabla v), \quad \forall\, v \in V_{(0)}.
\end{align}
As it has been shown in \cite[Section 2]{New_Results_Mobility}, for $f \in V_{(0)}^{\prime}$, $\| \nabla \G_q f \|$ and $\| \nabla \G f \|$ are equivalent norms to the $(H^1(\Omega))^{\prime}$-norm. Further properties of the operator $\G_q$ can be found in, e.g., \cite{New_Results_Mobility}.
Suppose that $m \in C^1([-1,1])$ and $q \in H^2(\Omega)$. Then for any $f \in L^2_{(0)}(\Omega)$, we find $\G_q f\in H_{\mathbf{n}}^2(\Omega)$ and the following estimate when the spatial dimension is two (see \cite[Equation (2.15)]{New_Results_Mobility}):
\begin{align}\label{GqH2}
	\| \G_q f \|_{H^2(\Omega)} \leqslant
	C \big( \| \nabla q \| \| q \|_{H^2(\Omega)} \| \nabla \G_q f \| + \| f \| \big), \quad d=2.
\end{align}
Here, the positive constant $C$ depends only on $\Omega$ and the function $m$.

Next, we recall some function spaces for the Navier--Stokes equations. Let $\bm{C}^\infty_{0,\sigma}(\Omega)$ be the space of divergence-free vector fields in $(C^\infty_0(\Omega))^d$. We define $\bm{H}_{\sigma}$ and $\bm{V}_{\sigma}$ as the closure of $\bm{C}^\infty_{0,\sigma}(\Omega)$ with respect to the $\bm{L}^2$ and $\bm{H}^1$ norms, respectively. The space $\bm{V}_{\sigma}$ is equipped with the scalar product
 $(\bm{u},\bm{v})_{\bm{V}_{\sigma}}:=(\nabla \bm{u},\nabla \bm{v})$ for all $\bm{u},\, \bm{v}  \in \bm{V}_{\sigma}$ and the norm $\|\bm{u}\|_{\bm{V}_{\sigma}}=\|\nabla \bm{u}\|$. We recall the classical Korn's inequality
\begin{align}
 \|\nabla \bm{u}\| \leqslant \sqrt{2} \|D\bm{u}\|\leqslant \sqrt{2}\|\nabla \bm{u}\|,\quad \forall\, \bm{u}\in \bm{V}_{\sigma}.
 \label{Korn}
\end{align}
Besides, Poincar\'e's inequality entails that
\begin{align}
 \|\bm{u}\| \leqslant C_P \|\nabla \bm{u}\|,\quad \forall\, \bm{u}\in \bm{V}_{\sigma},
 \label{Poin}
\end{align}
where the positive constant $C_P$ depends only on $\Omega$.
It is well known that $\bm{L}^2(\Omega)$ can be decomposed into $\bm{H}_{\sigma}\oplus\bm{G}(\Omega)$, where $\bm{G}(\Omega):=\{\bm{f}\in\bm{L}^2(\Omega): \exists\, g\in H^1(\Omega),\ \bm{f}=\nabla g\}$. Then we introduce the Helmholtz--Leray projection in the space of divergence-free functions $\bm{P}:\bm{L}^2(\Omega)\to \bm{H}_{\sigma}$. Recall  the Stokes operator $\bm{S}: \bm{V}_{\sigma}\cap\bm{H}^2(\Omega)\to\bm{H}_{\sigma}$ such that
\begin{equation}
(\bm{S}\bm{u},\bm{\zeta})=(\nabla \bm{u},\nabla\bm{\zeta}),\quad  \forall\, \bm{\zeta} \in \bm{V}_{\sigma},
\nonumber
 \end{equation}
 with the domain $\mathcal{D}(\bm{S})= \bm{V}_{\sigma}\cap\bm{H}^2(\Omega)$ (see, e.g., \cite[Chapter III]{Sohr}). The operator $\bm{S}$ is a canonical isomorphism from $\bm{V}_{\sigma}$ to $\bm{V}_{\sigma}^{\prime}$ with the inverse $\bm{S}^{-1}:\bm{V}_{\sigma}^{\prime}\to\bm{V}_{\sigma}$. For any $\bm{f}\in \bm{V}_{\sigma}^{\prime}$, there is a unique $\bm{u}=\bm{S}^{-1}\bm{f}\in\bm{V}_{\sigma}$ such that
\begin{equation}
(\nabla(\bm{S}^{-1}\bm{f}),\nabla \bm{\zeta})=\langle\bm{f},\bm{\zeta}\rangle_{\bm{V}_{\sigma}},
\quad \forall\, \bm{\zeta} \in \bm{V}_{\sigma},
\nonumber
\end{equation}
and $\|\nabla(\bm{S}^{-1}\bm{f})\|=\langle\bm{f},\bm{S}^{-1}\bm{f} \rangle_{\bm{V}_{\sigma}}^{\frac{1}{2}}$ is an equivalent norm on $\bm{V}_{\sigma}^{\prime}$.
In addition, there exists a positive constant $C$ such that $\|\bm{u}\|_{\bm{H}^2(\Omega)}\leqslant C\|\bm{S} \bm{u}\|$ for any $\bm{u}\in \mathcal{D}(\bm{S})$.

Throughout this paper, the symbols $C$, $C_i$, $i\in \mathbb{N}$, denote generic positive constants that may depend on coefficients of the system, norms of the initial data, $\Omega$ and time. Their values may change from line to line, and specific dependencies will be pointed out when necessary.

%

\subsection{Statement of results}

First, we introduce the assumptions used in the subsequent analysis.

\begin{itemize}
\item[$\mathbf{(A0)}$] $\Omega \subset \mathbb{R}^d$, $d\in\{2,3\}$, is a bounded domain with $C^3$-boundary $\partial \Omega$.

\item[$\mathbf{(A1)}$] The viscosity $\nu: \mathbb{R}^2 \to \mathbb{R}^+$ belongs to $C^1(\mathbb{R}^2)$ and
\begin{align*}
			\text{there exists}\ \underline{\nu} >0\ \text{such that}\ \  \nu(s_1,s_2)\geqslant \underline{\nu},  \quad  \forall\, (s_1,s_2) \in \mathbb{R}^2.
\end{align*}
\item[$\mathbf{(A2)}$] The mobility $m:\mathbb{R}^2\to\mathbb{R}^+$ belongs to $C^1(\mathbb{R}^2)$ and
\begin{align*}
			\text{there exists}\ \underline{m}>0\ \text{such that}\ \ m(s_1,s_2)\geqslant \underline{m}, \quad  \forall\,  (s_1,s_2) \in \mathbb{R}^2.
\end{align*}
\item[$\mathbf{(A3)}$] The thermal diffusivity $\kappa:\mathbb{R}^2\to\mathbb{R}^+$  belongs to $C^2(\mathbb{R}^2)$ and
\begin{align*}
			\text{there exists}\ \underline{\kappa}>0\ \text{such that}\ \ \kappa(s_1,s_2)\geqslant \underline{\kappa}, \quad  \forall\,  (s_1,s_2) \in \mathbb{R}^2.
\end{align*}
\item[$\mathbf{(A4)}$]
The singular potential $W$ belongs to the class of functions $C([-1,1])\cap C^{2}(-1,1)$ and it can be decomposed into the following form
	\begin{equation}
	W(s)=W_1(s)+W_2(s).\nonumber
	\end{equation}
The singular convex part $W_1$ fulfills
	\begin{equation}
	\lim_{s\to \pm 1} W_1^{\prime}(s)=\pm \infty ,\quad \text{and}\ \  W_1^{\prime \prime}(s)\geqslant c_0,\quad \forall\, s\in (-1,1),\nonumber
	\end{equation}
for some strictly positive constant $c_0$. Besides, we make the extension $W_1(s)=+\infty$ for any $s\notin[-1,1]$. Concerning the regular (possibly concave) part $W_2$, we assume that
$$
\text{there exists}\ c_W >0 \ \ \text{such that}\ \ |W^{\prime \prime}_2(s)|\leqslant
        c_W,\quad \forall\, s\in\mathbb{R}.
$$
\item[$\mathbf{(A5)}$] The averaged density $\rho$ is given by \eqref{density}, the relative flux $\mathbf{J}$ satisfies \eqref{Jflux} and the buoyancy force $\bm{f}_{\mathrm{b}}$ takes the form of \eqref{bouy}. The Cauchy stress tensor $\bm{\sigma}$ is defined as in \eqref{Cauchy_stress_tensor} with the surface tension coefficient $\lambda$ determined by \eqref{Eotvos}. $\rho_1$, $\rho_2$, $\lambda_0$, $a$, $b$, $\alpha$, $g$ are given positive constants.
\end{itemize}

\begin{remark}\label{rem:W}\rm
It is straightforward to check that the logarithmic potential \eqref{Wphi} with continuous extension to the interval $[-1,1]$ satisfies the structural assumption $\mathbf{(A4)}$.
Moreover, for any function $W$ that satisfies $\mathbf{(A4)}$, we can write
\begin{align}
W(s)= F(s)- c_W s^2,\label{modif-W}
\end{align}
with $F(s)= W_1(s)+W_2(s) + c_W s^2$.
Then $F\in C([-1,1])\cap C^{2}(-1,1)$ satisfies
\begin{equation}
	\lim_{s\to \pm 1} F^{\prime}(s)=\pm \infty ,\quad \text{and}\ \  F^{\prime \prime}(s)\geqslant c_0 + c_W > c_W,\quad \forall\, s\in (-1,1).\nonumber
	\end{equation}
We also have $F(s)=+\infty$ for any $s\notin[-1,1]$.
In the subsequent analysis, we shall always use the reformulation \eqref{modif-W} for any given potential $W$ that satisfies  $\mathbf{(A4)}$.
\end{remark}

\begin{remark}\rm
Since both the phase-field variable $\phi$ and the temperature $\theta$ are bounded as in the definition of solutions, in $\mathbf{(A1)}$--$\mathbf{(A3)}$, we can alternatively assume that (cf. \cite{LB99,HW2017})
$$
\nu(s_1,s_2)>0, \quad m(s_1,s_2)>0,\quad  \kappa(s_1,s_2)>0, \quad  \forall\, (s_1,s_2) \in \mathbb{R}^2.
$$
\end{remark}

We are now in a position to state the main results.

\begin{theorem}\label{weaksolution-3d}
	Let $d=2,3$. Suppose that the assumptions $\mathbf{(A0)}$--$\mathbf{(A5)}$ are satisfied. For any initial data
    $\u_0 \in \bm{H}_{\sigma}$, $\phi_0 \in H^{1}(\Omega)$, $\|\phi_0\|_{L^{\infty}(\Omega)} \leqslant 1$, $|\overline{\phi_0}|<1$,
	$\theta_0 \in L^{\infty}(\Omega)$ and the boundary datum $\theta_{\mathrm{b}}\in H^\frac52(\partial \Omega)$, problem \eqref{NSCHM}--\eqref{initial} admits a global weak solution $(\u, \phi, \mu, \theta)$ on $[0,\infty)$ such that
	\begin{align*}
		&\u \in L^{\infty}(0, \infty ; \bm{H}_{\sigma})
		\cap
		L^2_{\mathrm{uloc}}([0,\infty) ; \V), \\
		&\phi \in L^{\infty}(0, \infty ; H^1(\Omega))
		\cap
		L^4_{\mathrm{uloc}}([0, \infty); H^2_\mathbf{n}(\Omega))
		\cap
		L^2_{\mathrm{uloc}}([0, \infty) ; W^{2, p}(\Omega))
		\cap
		H^1_{\mathrm{uloc}}([0, \infty) ; (H^1(\Omega))^{\prime}), \\
		&\phi \in L^{\infty}(\Omega \times(0, \infty)),\ \text { with } |\phi(x, t)|<1
		\text { a.e. in } \Omega \times(0, \infty),\\
		&\mu \in L^2_{\mathrm{uloc}}([0, \infty) ; H^1(\Omega)), \quad F'(\phi)\in L^2_{\mathrm{uloc}}([0,\infty); L^p(\Omega)), \\
		&\theta \in L^{\infty}( \Omega \times (0, \infty) )
		\cap
		L^2_{\mathrm{uloc}} ([0, \infty) ; H^1(\Omega))
		\cap
		H^1_{\mathrm{uloc}} ([0, \infty) ; H^{-1}(\Omega)),
	\end{align*}
	where $p \in [2,\infty)$ if $d=2$, and $p=6$ if $d=3$.
	The solution fulfills the following weak formulations
	\begin{align}
		&-\int_0^\infty (\rho \u, \partial_t\v )\,\mathrm{d}t -
		\int_0^\infty [(\rho \u\otimes\u, \nabla \v) +  (\u\otimes \mathbf{J}, \nabla \v)]\,\mathrm{d}t
        +\int_0^\infty (2\nu(\phi,\theta) D \u, D \v) \,\mathrm{d}t \notag \\
		&\quad = \int_0^\infty (\lambda_0 a \mu \nabla \phi, \v)\,\mathrm{d}t
        - \int_0^\infty (\lambda_0 b \theta (\nabla \phi \otimes \nabla \phi), \nabla \v)\,\mathrm{d}t
        + \int_0^\infty (\bm{f}_{\mathrm{b}}(\phi, \theta), \v )\,\mathrm{d}t,
		\label{weaku} \\
		&-\int_0^\infty ( \phi, \partial_t \zeta )\,\mathrm{d}t -
		\int_0^\infty  (\phi \u)\cdot \nabla \zeta\,\mathrm{d}t +
		\int_0^\infty (m(\phi,\theta) \nabla \mu, \nabla \zeta)\,\mathrm{d}t
        = 0,
		\label{weakphi} \\
		&-\int_0^\infty ( \theta, \partial_t \xi )\,\mathrm{d}t -
		\int_0^\infty (\theta \u )\cdot \nabla \xi\,\mathrm{d}t +
		\int_0^\infty(\kappa(\phi,\theta) \nabla \theta, \nabla \xi)\,\mathrm{d}t
        = 0,
		\label{weaktheta}
	\end{align}
	for all test functions $\v \in C_{0}^{\infty}(0,\infty; \bm{C}^\infty_{0,\sigma}(\Omega))$,
	$\zeta\in C_{0}^{\infty}(0,\infty;C^\infty(\overline{\Omega}))$,
    and $\xi \in C_{0}^{\infty}(0,\infty;C^\infty_0(\Omega))$.
    Moreover, it holds
	\begin{align}\label{weakmu}
		\mathbf{J}=-\frac{\rho_2-\rho_1}{2}m(\phi,\theta)\nabla \mu\quad\text{and}\quad
        \mu = - \Delta \phi + W^{\prime}(\phi), \quad \text{ a.e. in } \Omega\times (0,\infty).
	\end{align}
	The initial conditions \eqref{initial} are satisfied almost everywhere in $\Omega$ and the boundary condition $\theta=\theta_{\mathrm{b}}$ is satisfied almost everywhere on $\partial\Omega\times (0,\infty)$. Furthermore, for almost all $t\geqslant 0$, the following estimate holds for $\theta$:
	\begin{align}\label{maximumprinciple}
	\min\Big\{\operatorname*{ess\,inf}_{\Omega}\theta_0,\,\min_{\partial\Omega}\theta_{\mathrm{b}}\Big\} \leqslant \theta(x,t) \leqslant \max\Big\{\operatorname*{ess\,sup}_{\Omega}\theta_0,\,\max_{\partial\Omega}\theta_{\mathrm{b}}\Big\},\quad \text{a.e. in}\ \overline{\Omega}.
	\end{align}
\end{theorem}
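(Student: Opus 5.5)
We construct the solution through an implicit--explicit time discretization in the spirit of \cite{JMFM}, and then pass to the limit as the time step $h\to0$. First we lift the boundary datum: let $\theta_{\mathrm{b}}$ also denote a harmonic extension into $\Omega$. It lies in $H^3(\Omega)$, since $\theta_{\mathrm{b}}\in H^{5/2}(\partial\Omega)$ and $\partial\Omega$ is $C^3$. By the classical maximum principle it obeys the same bounds as its boundary trace. Given $(\u_k,\phi_k,\theta_k)$, we define $(\u_{k+1},\phi_{k+1},\mu_{k+1},\theta_{k+1})$ in two steps. First we solve the linear heat equation
\[
\frac{\theta_{k+1}-\theta_k}{h}+\u_k\cdot\nabla\theta_{k+1}=\mathrm{div}(\kappa(\phi_k,\theta_k)\nabla\theta_{k+1}),\qquad \theta_{k+1}|_{\partial\Omega}=\theta_{\mathrm{b}},
\]
with the velocity and the coefficient taken explicitly. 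Then we solve the coupled Navier--Stokes/Cahn--Hilliard step of \cite{JMFM}, with
\begin{itemize}
\item the coefficients $\rho(\phi_k)$, $\nu(\phi_k,\theta_{k+1})$, $m(\phi_k,\theta_{k+1})$ frozen;
\item the Marangoni term $-\lambda_0 b\,\mathrm{div}(\theta_{k+1}\nabla\phi_k\otimes\nabla\phi_k)$ and the buoyancy $\bm f_{\mathrm b}(\phi_k,\theta_{k+1})$ treated explicitly;
\item the capillary term $\lambda_0 a\mu_{k+1}\nabla\phi_k$ kept semi-implicit, with $F'(\phi_{k+1})$ implicit and $c_W\phi$ explicit or averaged, so that the convex--concave splitting gives an energy inequality.
\end{itemize}
The heat step is a linear elliptic problem. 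Since $\mathrm{div}\,\u_k=0$, Stampacchia truncation with test functions $(\theta_{k+1}-M)_+$ and $(\theta_{k+1}-m_0)_-$ gives the discrete maximum principle. Here $M$ and $m_0$ are the bounds in \eqref{maximumprinciple}, so $\theta_k$ obeys \eqref{maximumprinciple} uniformly in $k$ and $h$. Existence for the second step follows as in \cite{JMFM} from a Leray--Schauder argument with a maximal monotone operator for $F'$. This also yields $|\phi_{k+1}|<1$ a.e. and mass conservation.

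Next comes the discrete energy estimate, which we expect to be the main obstacle. We test the momentum step with $\u_{k+1}$ and the Cahn--Hilliard step with $\lambda_0 a\mu_{k+1}$ and $\lambda_0 a(\phi_{k+1}-\phi_k)/h$. The terms from $\lambda_0 a\mu\nabla\phi$ cancel. The remaining forcing is
\[
\lambda_0 b\big(\theta_{k+1}\nabla\phi_k\otimes\nabla\phi_k,\nabla\u_{k+1}\big)+(\bm f_{\mathrm b},\u_{k+1}).
\]
Because $\theta_{k+1}$ is bounded in $L^\infty$ independently of $h$, the first term is at most $C\|\nabla\phi_k\|_{L^4}^2\|\nabla\u_{k+1}\|$. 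This is a problem: $\|\nabla\phi\|_{L^4}^2$ is not controlled by the energy alone. We plan to interpolate, in 3D,
\[
\|\nabla\phi\|_{L^4}^2\le C\|\nabla\phi\|^{1/2}\|\phi\|_{H^2}^{3/2}+C.
\]
We then use the elliptic estimate for $-\Delta\phi+F'(\phi)=\mu+2c_W\phi$ from \cite{JMFM}, which gives $\|\phi\|_{H^2}\le C(1+\|\nabla\mu\|)$ once mean values are handled. Mass conservation together with $|\overline{\phi_0}|<1$ gives $|\overline\mu|\le C(1+\|\nabla\mu\|)$, after which we need $\|\phi\|_{H^2}^2\le C(1+\|\nabla\phi\|^2)(1+\|\nabla\mu\|)$. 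If this is not enough, we will use the product structure instead: $\|\nabla\phi\|_{L^4}^2\le C\|\phi\|_{L^\infty}\|\phi\|_{H^2}\le C\|\phi\|_{H^2}$, since $|\phi|\le1$. This bound is the key use of the $L^\infty$ bound. With it, Young's inequality gives
\[
C\|\phi\|_{H^2}\|\nabla\u\|\le\tfrac{\underline\nu}{2}\|\nabla\u\|^2+C(1+\|\nabla\mu\|^2),
\]
so the constant in front of $\|\nabla\mu\|^2$ is not small and must be handled.

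Our remedy is to weight the energy. We use $\mathcal E=\int\tfrac12\rho|\u|^2+K\lambda_0 a(\tfrac12|\nabla\phi|^2+W)$, multiplying the CH equations by $K$ and the Marangoni coupling inversely. Equivalently, we estimate $\|\phi\|_{H^2}\le C\|\nabla\mu\|^{1/2}(\ldots)$ via $\|\Delta\phi\|^2\le\|\nabla\mu\|\|\nabla\phi\|+\ldots$, which follows from testing $\mu$ with $-\Delta\phi$ and using the monotonicity of $F'$. This gives $\|\Delta\phi\|^2\le\|\nabla\mu\|\|\nabla\phi\|+C\|\nabla\phi\|^2$, so the Marangoni term is bounded by $\varepsilon(\|\nabla\u\|^2+\|\nabla\mu\|^2)+C\|\nabla\phi\|^2+C$. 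The buoyancy term is bounded by $C\|\u\|\le\varepsilon\|\nabla\u\|^2+C$. The result is the discrete inequality
\[
\mathcal E_{k+1}+h\,\mathcal D_{k+1}\le\mathcal E_k+Ch(1+\mathcal E_{k})
\]
with dissipation $\mathcal D$. To obtain bounds uniform in time rather than exponential growth, we add a damping term. Poincaré and Korn give $\|\u\|^2\le C\|\nabla\u\|^2$. Also
\[
\|\nabla\phi\|^2=(\mu-\overline\mu,\phi-\overline\phi)-(F'(\phi)-2c_W\phi,\phi-\overline\phi)\le C\|\nabla\mu\|+C,
\]
because $(F'(\phi),\phi-\overline\phi)\ge c\,\|F'(\phi)\|_{L^1}-C$ when $|\overline\phi|<1$. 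Hence $\mathcal D\ge c\,\mathcal E-C$, and we reach $\mathcal E_{k+1}(1+ch)\le\mathcal E_k+Ch$, which gives a uniform bound in $k$. The same telescoping over windows of length $1$ gives the $L^2_{\mathrm{uloc}}$ bounds. For $\theta$, testing with $\theta_{k+1}-\theta_{\mathrm b}$ gives $L^2_{\mathrm{uloc}}(H^1)$.

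Finally, we interpolate piecewise constant and piecewise linear in time and pass to the limit. The estimates above give $\phi\in L^4_{\mathrm{uloc}}(H^2)$, and $F'(\phi)\in L^2_{\mathrm{uloc}}(L^p)$ by the elliptic estimates of \cite{JMFM}. Difference quotients are bounded in $(H^1)'$ for $\phi$, in $H^{-1}$ for $\theta$, and, as in \cite{JMFM}, in a negative norm for $\mathbf P(\rho\u)$. Aubin--Lions compactness, including the shifted-in-time variant for $\u$, gives strong convergence of $\u$ in $L^2_{loc}(L^2)$, of $\phi$ in $L^2_{loc}(W^{1,q})$ with $q<6$, and of $\theta$ in $L^2_{loc}(L^2)$. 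Combined with uniform boundedness, this lets us pass to the limit in $\nu(\phi,\theta)$, $m$, $\kappa$, in $\theta\nabla\phi\otimes\nabla\phi$, and in $\u\otimes\mathbf J$. The identification $\mu=-\Delta\phi+W'(\phi)$ follows from the maximal monotonicity of $F'$. The maximum principle \eqref{maximumprinciple} and the boundary trace of $\theta$ pass to the limit by weak-$*$ closedness, and the initial data are attained through the weak continuity of the limit.
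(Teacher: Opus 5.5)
Your splitting (a linear heat step driven by $\bm{u}_k$, followed by the step of \cite{JMFM} with the Marangoni stress and buoyancy as a given forcing) is a legitimate alternative to the paper's fully coupled scheme. The estimates you collect are essentially the paper's: $\|\nabla\phi\|_{L^4}^2\le C\|\phi\|_{H^2}\|\phi\|_{L^\infty}$, $\|\Delta\phi\|^2\lesssim\|\nabla\mu\|\|\nabla\phi\|+\ldots$ from testing with $-\Delta\phi$, and $\|\nabla\phi\|^2\le C\|\nabla\mu\|+C$ from testing with $\phi-\overline{\phi}$.

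However, the way you close the energy estimate fails as written. From $\mathcal{E}_{k+1}+h\mathcal{D}_{k+1}\le\mathcal{E}_k+Ch(1+\mathcal{E}_k)$ and $\mathcal{D}\ge c\,\mathcal{E}-C$ you only get $(1+ch)\mathcal{E}_{k+1}\le(1+Ch)\mathcal{E}_k+Ch$. Here $C$ is the Young constant in front of $\|\nabla\phi\|^2$, of size $\varepsilon^{-1}$, while $c$ is a fixed small rate. This gives exponential growth, so you lose exactly the uniform-in-time classes ($L^\infty(0,\infty;\cdot)$, $L^2_{\mathrm{uloc}}$) asserted in the theorem.

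What you need is that $\|\nabla\phi\|^2\le C\|\nabla\mu\|+C$ is sublinear in the dissipation, so any multiple of it is harmless: $C\|\nabla\phi\|^2\le\delta\|\nabla\mu\|^2+C_\delta$. Apply this to the growth term itself. The Marangoni contribution is then bounded by $\varepsilon\|\nabla\bm{u}\|^2+\delta\|\nabla\mu\|^2+C$, with no energy-dependent remainder. This is what the paper does by testing the $\mu$-equation with $2C_3(\phi-\overline{\phi})$; that step both absorbs $C_3\|\nabla\phi\|^2$ and produces the damping in \eqref{discrete-energy-inequality}.

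Your weighting by $K$ is not a valid alternative. The relative weights of the momentum test and the Cahn--Hilliard test are fixed by the cancellation of $\lambda_0 a\mu\nabla\phi_k$. Rescaling therefore cannot change the ratio between the Marangoni forcing and the $\mu$-dissipation.

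Two further issues are specific to your scheme. First, because $\nabla\phi_k\otimes\nabla\phi_k$ is explicit, the $H^2$-bound you need is that of $\phi_k$. That bound involves $\|\nabla\mu_k\|$, the dissipation of the previous step, not $\|\nabla\mu_{k+1}\|$ as your write-up tacitly uses. This is repairable but must be done: work with $\mathcal{E}_k+\delta h\|\nabla\mu_k\|^2$, and for $k=0$ use that $h\|\phi^0\|_{H^2}^2$ is bounded independently of $h$. That holds for the heat-flow regularization $\phi^0=\varphi(h)$.

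Second, you never regularize the data, and your scheme needs it:
\begin{itemize}
\item with $\phi_0\in H^1(\Omega)$, the first forcing $\mathrm{div}(\theta_1\nabla\phi_0\otimes\nabla\phi_0)$ is not in $\bm{V}_\sigma'$;
\item with $\bm{u}_0\in\bm{H}_\sigma$, the form $\int_\Omega(\bm{u}_0\cdot\nabla\theta)\xi\,\mathrm{d}x$ is not bounded on $H^1_0(\Omega)$;
\item with $\theta_0\in L^\infty(\Omega)$ only, $\theta_1$ is merely $H^1$. Then $m(\phi_0,\theta_1)$ and the next coefficient $\kappa(\phi_1,\theta_1)$ lack the regularity that gives $\mu\in H^2_{\mathbf{n}}(\Omega)$ in \cite{JMFM}, and $\theta_k\in H^2(\Omega)$ at later steps.
\end{itemize}
You need bound-preserving regularizations as in Step 1 of the paper, plus one for $\bm{u}_0$.

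With these repairs your route works, and the trade-off is as follows. Decoupling makes the heat step linear with an immediate maximum principle. It also turns the NS/CH step into exactly that of \cite{JMFM} with a forcing in $\bm{V}_\sigma'$, so you avoid the paper's fixed-point analysis of the Marangoni nonlinearity in $\phi^{k+1}$ (for instance, the continuity of $F(\phi)\nabla\theta^k$). The paper's implicit treatment of $\nabla\phi$ and of $\bm{u}\cdot\nabla\vartheta$ costs a larger Leray--Schauder problem, but in exchange gives a lag-free one-step energy inequality.
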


\begin{remark}\rm
Indeed, we have
$$
\u \in C_w([0, \infty) ; \bm{H}_{\sigma}),\quad \phi\in C_w([0, \infty) ; H^1(\Omega)), \quad \theta\in C([0, \infty) ; L^2(\Omega))
$$
so that the initial data can be attained.
\end{remark}

\begin{remark}\rm
In the weak formulation \eqref{weaku}, the vectorial term $\mathrm{div} ( \lambda(\theta) ( \frac{1}{2}|\nabla \phi|^2 + W(\phi))  \mathbb{I}_d )$ is absorbed into the pressure, and thus vanishes after being tested by the divergence-free function $\v$.
Furthermore, we rewrite the term $\int_{\Omega} \lambda(\theta) (\nabla \phi \otimes \nabla \phi): \nabla \v \dx $ using the definition \eqref{Eotvos} and the fact $\mathrm{div}\,\v=0$ such that
	\begin{align*}
		\int_{\Omega} \lambda(\theta) (\nabla \phi \otimes \nabla \phi): \nabla \v \dx
        &=
		\lambda_0 a (\nabla \phi \otimes \nabla \phi, \nabla \v) - \lambda_0 b (\theta \nabla \phi \otimes \nabla \phi, \nabla \v)
        \\
		&= -\lambda_0 a \left(\nabla \phi \Delta \phi + \nabla \frac{|\nabla \phi|^2}{2}, \v\right)
		- \lambda_0 b (\theta \nabla \phi \otimes \nabla \phi, \nabla \v)
        \\
		&= \lambda_0 a (-\Delta \phi \nabla \phi + W^{\prime}(\phi) \nabla \phi, \v)
		- \lambda_0 b (\theta \nabla \phi \otimes \nabla \phi, \nabla \v)
        \\
		&= \lambda_0 a (\mu \nabla \phi, \v)
		- \lambda_0 b (\theta \nabla \phi \otimes \nabla \phi, \nabla \v).
	\end{align*}
On the right-hand side of the above equality, the first term corresponds to the usual Korteweg force (see, e.g., \cite{phasefield1}), and the second term arises due to the thermo-induced Marangoni effect (see, e.g., \cite{EnVarA1,EnVarA2}).
\end{remark}

\begin{remark}\rm
We note that the regularity assumption on the boundary data $\theta_{\mathrm{b}}$ can be relaxed. Here, we assume $\theta_{\mathrm{b}} \in H^{\frac52}(\partial \Omega)$ for the purpose of the estimate in \eqref{need-H5/2}, and this condition can be weakened to $\theta_{\mathrm{b}} \in H^{\frac32}(\partial \Omega)$ by a standard density argument. This is possible because the $C^3$-regularity of the boundary $\partial \Omega$ ensures that any $\theta_{\mathrm{b}} \in H^{\frac32}(\partial \Omega)$ is the limit of a sequence $\{\theta_{\mathrm{b}}^k\} \subset H^{\frac52}(\partial \Omega)$ (see, e.g., \cite{Sobolev-manifold}). By solving the extension problem \eqref{Theta_b} for each approximation $\theta_{\mathrm{b}}^k$ and then passing to the limit, all subsequent arguments follow with straightforward adaptations.
\end{remark}
\medskip

Next, in the two-dimensional case, we establish the uniqueness of global weak solutions under the assumptions that the fluid mixture has matched densities, the thermal diffusivity depends only on the temperature, the mobility depends only on the concentration, and the initial temperature is more regular.

\begin{theorem}\label{weaksolution-2d}
Let $d=2$. Suppose that the assumptions $\mathbf{(A0)}$--$\mathbf{(A5)}$ are satisfied.
For any initial data
	$\u_0 \in \H_\sigma$,
	$\phi_0 \in H^{1}(\Omega)$,
	$\|\phi_0\|_{L^{\infty}(\Omega)} \leqslant 1$,
	$|\overline{\phi_0}|<1$,
	$\theta_0 \in C^{\gamma}(\overline{\Omega}) \cap H^1(\Omega)$ with $\gamma \in (0,1)$ and $\theta_0|_{\partial\Omega}=\theta_{\mathrm{b}}\in H^\frac32(\partial \Omega)$, problem \eqref{NSCHM}--\eqref{initial} admits a global weak solution $(\u, \phi, \mu, \theta)$ on $[0,\infty)$ satisfying the additional regularity properties (cf. Theorem \ref{weaksolution-3d}):
	\begin{align*}
		\theta \in L^{\infty}(0, \infty ; H^1(\Omega) \cap C^{\beta}(\overline{\Omega}) )
		\cap
		L^2_{\mathrm{uloc}} ([0, \infty) ; H^2(\Omega))
		\cap
		H^1_{\mathrm{uloc}} ([0, \infty) ; L^2(\Omega)),
	\end{align*}
	for some $ \beta \in (0, \gamma]$. Assume, in addition,
	$$ \rho_1=\rho_2, \quad
	m(\phi, \theta) \equiv m(\phi), \quad
	\kappa(\phi, \theta) \equiv \kappa(\theta).
	$$
Then the global weak solution is unique.
\end{theorem}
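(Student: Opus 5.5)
The proof has two parts: existence with the improved temperature regularity, and uniqueness under the extra structural assumptions.

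\textbf{Existence with improved regularity.} I would reuse the time-discretization approximation behind Theorem \ref{weaksolution-3d} and add estimates for $\theta$ that are uniform in the step size. The key step is a H\"older bound. Since $\theta$ is bounded by \eqref{maximumprinciple}, I would regard \eqref{BOUSSINESQ} as a linear divergence-form parabolic equation
\[
\partial_t\theta - \mathrm{div}\big(\kappa\nabla\theta - \u\theta\big)=0,
\qquad
\kappa=\kappa(\phi,\theta)\ \text{bounded and measurable}.
\]
In two dimensions, $\u\in L^\infty(0,T;\H_\sigma)\cap L^2(0,T;\V)$ gives $\u\theta\in L^{q}(0,T;L^{r}(\Omega))$ with $2/q+2/r<1$ for suitable $(q,r)$. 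The De Giorgi--Nash--Moser type H\"older estimate \eqref{GiorginiHolder}, together with $\theta_0\in C^\gamma(\overline\Omega)$ and the compatible boundary data, then gives $\theta\in C^\beta$ up to the boundary for some $\beta\le\gamma$, uniformly on unit time intervals.

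Next I would test the equation for $\theta-\Theta_b$, where $\Theta_b$ is the extension from \eqref{Theta_b}, by $-\Delta(\theta-\Theta_b)$, or equivalently by $\partial_t\theta$ in Kirchhoff form. Writing $K(\theta)=\int_0^\theta\kappa$ when $\kappa$ depends on $\theta$ only, or handling the general case through elliptic $H^2$ regularity for $-\mathrm{div}(\kappa\nabla\theta)$ with continuous coefficient, should give $\theta\in L^\infty H^1\cap L^2H^2\cap H^1L^2$. The H\"older continuity is what lets us treat $\kappa(\phi,\theta)$ as a continuous coefficient in $\theta$. The terms involving $\nabla\phi$ would be controlled using $\phi\in L^4H^2$, and the convection term via Ladyzhenskaya's inequality:
\[
\|\u\cdot\nabla\theta\|\le \|\u\|_{L^4}\|\nabla\theta\|_{L^4}\le C\|\u\|^{1/2}\|\nabla\u\|^{1/2}\|\nabla\theta\|^{1/2}\|\theta\|_{H^2}^{1/2}.
\]
A Gronwall argument on unit intervals, passed to the limit in the scheme, yields the stated regularity.

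\textbf{Uniqueness.} With $\rho_1=\rho_2$ we have $\mathbf{J}$ out of the momentum equation and $\rho$ constant. Take two solutions with the same data and set $\u=\u_1-\u_2$, $\phi=\phi_1-\phi_2$, $\theta=\theta_1-\theta_2$, noting $\overline{\phi}=0$. I would estimate the following energy:
\begin{itemize}
\item $\u$ in $\V'$ via $\bm{S}^{-1}$, or in $\H_\sigma$;
\item $\phi$ in $V_{(0)}'$ using $\G_{\phi_1}$, to deal with $m(\phi)$ and with \eqref{GqH2} available;
\item $\theta$ in $H^{-1}$ or $L^2$.
\end{itemize}
For the temperature difference, since $\kappa=\kappa(\theta)$, I would work with the Kirchhoff transform $K(\theta_1)-K(\theta_2)$ and test with $(-\Delta_D)^{-1}(\theta_1-\theta_2)$. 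This is where $\kappa=\kappa(\theta)$ is essential. The Marangoni term $b(\theta\nabla\phi\otimes\nabla\phi,\nabla\v)$ produces differences such as $\theta\nabla\phi_1\otimes\nabla\phi_1$, which need $\theta$ in $L^2$ against $\nabla\phi_1\in L^\infty$-type bounds; these come from $\phi_1\in L^2W^{2,p}$. The viscosity difference $(\nu(\phi_1,\theta_1)-\nu(\phi_2,\theta_2))D\u_2$ requires $\phi,\theta$ in $L^\infty$-like norms, which I would get by interpolation, using the $H^2$ regularity of $\theta_i$ and $\phi_i$ from the first part.

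\textbf{Main obstacle.} The hardest step will be closing the differential inequality: the weak norms chosen for the differences must be compatible with the variable viscosity and with the Marangoni coupling, given only $L^2$-in-time integrability of $\nabla\u_i$. I would try the norm $\|\u\|^2+\|\nabla\G_{\phi_1}\phi\|^2+\|\theta\|^2$, and aim for an inequality
\[
\frac{\mathrm{d}}{\mathrm{d}t}Y\le h(t)\,Y\,\big(1+\ln(1/Y)\big)\quad\text{or}\quad \frac{\mathrm{d}}{\mathrm{d}t}Y\le h(t)\,Y,\qquad h\in L^1(0,T).
\]
This would conclude by Gronwall, or by Osgood's lemma if the logarithmic form arises from a Brezis--Gallouet--Wainger bound.
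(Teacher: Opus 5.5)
The uniqueness part does not close with the functional you finally commit to, $\|\u\|^2+\|\nabla\G_{\phi_1}\phi\|^2+\|\theta\|^2$. Your treatment of $\phi$ via $\G_{\phi_1}$ matches the paper, which uses the weighted form $\int_\Omega m(\phi_1)|\nabla\G_{\phi_1}\phi|^2\,\mathrm{d}x$. The problem is the $\u$ and $\theta$ components.

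Testing the velocity difference by $\u$ gives only $\underline{\nu}\|\nabla\u\|^2$ as dissipation. The viscosity difference produces $\int_\Omega(\nu(\phi_1,\theta_1)-\nu(\phi_2,\theta_2))D\u_2:D\u\,\mathrm{d}x$, and $\u_2$ is merely in $L^2(0,T;\bm{V}_\sigma)$. So this term needs $\|\phi\|_{L^\infty}$ and $\|\theta\|_{L^\infty}$ against $\|\nabla\u_2\|$. In 2D neither is bounded by $Y$ times an integrable function. Interpolating with the bounded (not small) $H^2$-norms of $\phi_i,\theta_i$ gives only sublinear powers of $Y$, or dissipative terms multiplied by $\|\nabla\u_2\|^2$.

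What works is the first option on your list, $\|\nabla(\bm{S}^{-1}\u)\|^2$. Its dissipation is $\int_\Omega\nu(\phi_1,\theta_1)|\u|^2\,\mathrm{d}x$ after commuting $\nu$ through the Stokes operator; the resulting pressure term and terms with $\nabla\phi_1,\nabla\theta_1$ also need $\theta_1\in L^2_{\mathrm{uloc}}([0,\infty);H^2(\Omega))$. The $\theta$-part of the viscosity difference is then bounded by $\|\theta\|_{L^4}\|D\u_2\|\|D\bm{S}^{-1}\u\|_{L^4}$. The $\phi$-part is handled by the logarithmic product estimate $\|\phi\,D(\bm{S}^{-1}\u)\|\leqslant C\|\phi\|_{H^1}\|\nabla\bm{S}^{-1}\u\|\ln^{1/2}\big(e\|\nabla\bm{S}^{-1}\u\|_{H^1}/\|\nabla\bm{S}^{-1}\u\|\big)$ of \cite{Giorgini2019}. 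This estimate is the actual source of the $\mathcal{Y}\ln(C/\mathcal{Y})$ structure and of Osgood's lemma.

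For the same reason the temperature difference must be estimated in $L^2$, not in $H^{-1}$ via the Kirchhoff transform. Both the term above and the Marangoni difference $\lambda_0 b\,\theta\,\nabla\phi_1\otimes\nabla\phi_1$ need $\|\theta\|_{L^4}$, i.e. the $\|\nabla\theta\|^2$ dissipation. With only $\|\theta\|^2$ you would need, e.g., $\nabla\phi_1\in L^4(0,T;\bm{L}^\infty(\Omega))$, which is not available. In the $L^2$ estimate, the terms $(\kappa(\theta_1)-\kappa(\theta_2))\nabla\theta_1\cdot\nabla\theta$ and $(\u\cdot\nabla\theta_2)\theta$ are controlled through $\theta_1,\theta_2\in L^2_{\mathrm{uloc}}([0,\infty);H^2(\Omega))$. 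That, not the Kirchhoff transform, is where the improved regularity and $\kappa=\kappa(\theta)$ enter. A $\phi$-dependence would give $\|\phi\|_{L^4}\|\nabla\theta_1\|_{L^4}\|\nabla\theta\|$ and require $\theta_1\in L^4_{\mathrm{uloc}}H^2$, cf. Remark \ref{weaksolution-2d'}.

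The regularity part has gaps in both mechanism and justification. First, \eqref{GiorginiHolder} is not a De Giorgi--Nash--Moser estimate. It is the interpolation inequality $\|\theta\|_{W^{1,4}}\leqslant C\|\theta\|_{C^\beta}^{\xi}\|\theta\|_{H^2}^{1-\xi}$ with $\xi>\frac12$. Its role is to control the quadratic term $-\int_\Omega\partial_2\kappa\,|\nabla\theta|^2\Delta\vartheta\,\mathrm{d}x$ that arises when testing by $-\Delta\vartheta$ with $\kappa=\kappa(\phi,\theta)$: it turns $\|\nabla\theta\|_{L^4}^2$ into $\|\theta\|_{H^2}^{2-2\xi}$ with $2-2\xi<1$, which can be absorbed.

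Treating $\kappa$ as a continuous coefficient does not achieve this. Divergence-form equations with merely continuous coefficients give $W^{1,p}$ bounds, not $H^2$, and expanding $\nabla\kappa$ brings $|\nabla\theta|^2$ back. The Kirchhoff transform only covers $\kappa=\kappa(\theta)$, whereas the existence claim is for general $\kappa(\phi,\theta)$.

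Second, the energy-class velocity in 2D gives only $2/q+2/r=1$ (e.g. $\u\in L^4_{\mathrm{uloc}}L^4$), the critical case, not $<1$. So the H\"older bound does not follow from standard subcritical theory; the paper takes it from \cite[Lemma 3.2]{ZZF2013}.

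Third, these estimates cannot be run on a weak solution, since testing by $-\Delta\vartheta$ is not allowed. Nor can they be run on the implicit--explicit scheme uniformly in $h$: there is no discrete H\"older bound, and $\kappa(\phi^k,\theta^k)$ produces $\nabla\theta^k\cdot\nabla\theta^{k+1}$ terms. The missing step is the paper's decoupling:
\begin{enumerate}
\item freeze $(\u^*,\phi^*)$ from a weak solution given by Theorem \ref{weaksolution-3d};
\item solve \eqref{uphitotheta1}--\eqref{uphitotheta3} for a strong solution satisfying the above estimates;
\item identify that strong solution with $\theta^*$ through the weak--strong uniqueness of Proposition \ref{uphitotheta}(iii), which is an $L^2$ estimate using $\theta_1\in L^2_{\mathrm{uloc}}([0,\infty);H^2(\Omega))$.
\end{enumerate}
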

\smallskip

\begin{remark}\rm \label{uniqueness-AGG}
In the general case of unmatched densities, the uniqueness of weak solutions in two dimensions remains an open question even without the coupling of thermal effects (see \cite{AGG2dstrong,longtime_ann} for the AGG model).
\end{remark}

\begin{remark}\rm \label{weaksolution-2d'}
In the case where mobility and thermal diffusivity depend on both $\phi$ and $\theta$, we need the following additional regularity property to ensure uniqueness:
$$
\theta_1 \in L_{\mathrm{uloc}}^4([0, \infty); H^2(\Omega) ),
$$
where $(\u_1, \phi_1, \mu_1, \theta_1)$, $(\u_2, \phi_2, \mu_2, \theta_2)$ are global weak solutions obtained in Theorem \ref{weaksolution-2d}.
However, it is not known whether $\theta_1$ can satisfy this condition.
\end{remark}

\section{Existence of Global Weak Solutions}
\label{section-of-weak-3d}

In this section, we prove Theorem \ref{weaksolution-3d} on the existence of global weak solutions to problem \eqref{NSCHM}--\eqref{initial}.
The proof is based on an implicit-explicit time-discretization scheme in the spirit of \cite{JMFM,Wangxiaoming}, with suitable modifications according to new structures of the coupled system. Without loss of generality, we present the argument for the three-dimensional case while pointing out necessary changes in two dimensions.

\subsection{Preliminaries}
To begin with, let us recall some properties of subgradients related to the Ginzburg--Landau free energy (see \cite{Abels2009, Abels2007}).
Concerning the convex part of the free energy (cf. Remark \ref{rem:W})
\begin{align}
E(\phi) =
\int_{\Omega}
\left( \frac{1}{2} |\nabla \phi|^{2} + F(\phi) \right) \mathrm{d}x,
\label{G-L}
\end{align}
we define
\begin{align}\label{convex-part-operator}
	\left.\widetilde{E}(\phi)=
	\left\{\begin{array}{ccc}
		E(\phi),
        &\text{for } \phi \in \D(\widetilde{E}),\\
        +\infty,
        &\text{else},
        \end{array}\right.
	\right.
\end{align}
with the effective domain
$$
\D(\widetilde{E}) := \left\{ \phi \in H^1(\Omega)\ :\ -1 \leqslant \phi \leqslant 1 \text{ a.e. in } \Omega \right\}.
$$
Then we define the subgradient
$$
\partial \widetilde{E}(\phi) = - \Delta \phi + F^{\prime}(\phi),
$$
with the domain
$$
\mathcal{D}(\partial\widetilde{E})
=\left\{\phi \in H^{2}_\mathbf{n}(\Omega)\ : \
F^{\prime}(\phi)\in L^{2}(\Omega),\quad F^{\prime\prime}(\phi)|\nabla \phi|^{2}\in L^{1}(\Omega)\right\}.
$$
Similarly, for any given $M\in (-1,1)$, we define
\begin{align}\label{convex-part-operator-M}
	\left.\widetilde{E}_M(\phi)=
	\left\{\begin{array}{ccc}
		E(\phi),
        &\text{for } \phi \in \D(\widetilde{E}_M),\\
        +\infty,
        &\text{else},
        \end{array}\right.
	\right.
\end{align}
with the effective domain
$$
\D(\widetilde{E}_M) := \left\{ \phi \in H^1(\Omega)\cap L^2_{(M)}(\Omega)\ :\ -1 \leqslant \phi \leqslant 1 \text{ a.e. in } \Omega \right\},
$$
as well as the subgradient
$$
\partial \widetilde{E}_M(\phi) = - \Delta \phi + P_0(F^{\prime}(\phi)),
$$
with the domain
$$
\mathcal{D}(\partial\widetilde{E}_M)
=\left\{\phi \in H^{2}_\mathbf{n}(\Omega)\cap L^2_{(M)}(\Omega)\ : \
F^{\prime}(\phi)\in L^{2}(\Omega),\quad F^{\prime\prime}(\phi)|\nabla \phi|^{2}\in L^{1}(\Omega)\right\}.
$$
According to \cite[Theorem 5]{Abels2009},
the following estimates hold
\begin{align}
& \| \phi \|_{H^2(\Omega)}^2 + \| F^{\prime}(\phi) \|^2 +
\int_{\Omega} F^{\prime \prime}(\phi) | \nabla \phi |^2 \dx \leqslant
C \left( \| \partial \widetilde{E}(\phi) \|^2 + \| \phi \|^2 + 1 \right),
\label{es-pE-1}
\end{align}
for all $ \phi\in \D(\partial \widetilde{E})$, and
\begin{align}
& \| \phi \|_{H^2(\Omega)}^2 + \| F^{\prime}(\phi) \|^2 +
\int_{\Omega} F^{\prime \prime}(\phi) | \nabla \phi |^2 \dx \leqslant
C \left( \| \partial \widetilde{E}_M(\phi) \|^2 + \| \phi \|^2 + 1 \right),
\label{es-pE-2}
\end{align}
for all $\phi\in \D(\partial \widetilde{E}_M)$. The positive constant $C$ in \eqref{es-pE-1} (resp. \eqref{es-pE-2}) is independent of $\phi\in \D(\partial\widetilde{E})$ (resp. $\phi\in \D(\partial\widetilde{E}_M)$).

Next, we recall some results for the following Neumann problem with singular nonlinearity, which will be useful in the subsequent analysis (see e.g., \cite{Abels2009,Conti2020,HwuHeleshaw,inequality}):
\begin{align}
\begin{cases}
- \Delta \phi + F^{\prime}(\phi) = f,\quad \text{in}\ \Omega,\\
\partial_{\mathbf{n}} \phi=0,\qquad \qquad\quad \ \text{on}\ \partial\Omega.
\end{cases}
\label{sing-phi}
\end{align}
\begin{lemma}\label{lem:sing-phi}
Let $\Omega$ be a bounded $C^2$-domain in $\mathbb{R}^d$, $d=2,3$. Assume that $F$ is determined by \eqref{modif-W} with the corresponding assumptions.

\begin{enumerate}[label=\textup{(\arabic*)}]
\item For any $f\in L^2(\Omega)$, problem \eqref{sing-phi} admits a unique strong solution $\phi\in H^2_{\mathbf{n}}(\Omega)$ with $F^{\prime}(\phi)\in L^2(\Omega)$, satisfying the equation $- \Delta \phi + F^{\prime}(\phi) = f$ almost everywhere in $\Omega$. In particular, $\|\phi\|_{L^\infty(\Omega)}\leqslant 1$.

\item Assume, in addition, $f\in L^p(\Omega)$ with $p \in [2,\infty)$ if $d=2$ and $p\in [2,6]$ if $d=3$, then we have
\begin{align}
& \|\phi\|_{W^{2, p}(\Omega)}+ \|F^{\prime}(\phi)\|_{L^p(\Omega)} \leqslant C(p)(1+\|f\|_{L^p(\Omega)}),
\label{es-wpphi}
\end{align}
where the positive constant $C(p)$ depends on $\Omega$ and $p$.
\end{enumerate}
\end{lemma}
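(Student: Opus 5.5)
Part (1) is a maximal monotone operator argument, and part (2) bootstraps an $L^p$ bound for $F'(\phi)$ through elliptic regularity; I treat them in turn.

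\textbf{Part (1).} By Remark \ref{rem:W}, $F$ is convex on $[-1,1]$ with $F''\geqslant c_0+c_W>0$ and $F=+\infty$ outside $[-1,1]$. Hence the functional $\widetilde{E}$ in \eqref{convex-part-operator} is proper, convex and lower semicontinuous on $L^2(\Omega)$, and its subgradient $\partial\widetilde{E}$ is maximal monotone. The characterization $\partial \widetilde{E}(\phi)=-\Delta\phi+F'(\phi)$, with the stated domain, is the result of \cite{Abels2009}.

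Solvability of $\partial\widetilde{E}(\phi)=f$ for every $f\in L^2(\Omega)$ follows from coercivity. Since $F''\geqslant c_0>0$, the operator $\partial\widetilde{E}-c_0 I$ is still monotone, so $\partial\widetilde{E}$ is strongly monotone. A strongly monotone maximal monotone operator is surjective. Equivalently, $\phi$ is the unique minimizer of the strictly convex, coercive functional $\widetilde{E}(\phi)-(f,\phi)$, whose existence follows from the direct method.

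Uniqueness: subtract the equations for two solutions and test with $\phi_1-\phi_2$. This gives $\|\nabla(\phi_1-\phi_2)\|^2+c_0\|\phi_1-\phi_2\|^2\leqslant 0$.

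Regularity: estimate \eqref{es-pE-1} gives $\phi\in H^2_{\mathbf n}(\Omega)$ and $F'(\phi)\in L^2(\Omega)$. Finiteness of $\widetilde{E}(\phi)$ gives $|\phi|\leqslant 1$ a.e. The equation then holds a.e. because $-\Delta\phi=f-F'(\phi)\in L^2$.

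\textbf{Part (2).} The key step is an $L^p$ bound on $F'(\phi)$ obtained by a monotone test. Since $F'$ may be unbounded, I first work with a truncation or Yosida approximation $F'_\varepsilon$, which is globally Lipschitz with $F'_\varepsilon(0)$ uniformly bounded. Let $\phi_\varepsilon$ solve the regularized problem. Multiply the equation by $|F'_\varepsilon(\phi_\varepsilon)|^{p-2}F'_\varepsilon(\phi_\varepsilon)$ and integrate by parts. The Laplacian term becomes
\begin{align*}
(p-1)\int_\Omega |F'_\varepsilon(\phi_\varepsilon)|^{p-2}F_\varepsilon''(\phi_\varepsilon)|\nabla\phi_\varepsilon|^2\dx\geqslant 0,
\end{align*}
by convexity, using the Neumann condition so that no boundary term appears. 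This leaves $\|F'_\varepsilon(\phi_\varepsilon)\|_{L^p}^p\leqslant \int_\Omega f\,|F'_\varepsilon(\phi_\varepsilon)|^{p-2}F'_\varepsilon(\phi_\varepsilon)\dx$. Hölder's inequality then gives $\|F'_\varepsilon(\phi_\varepsilon)\|_{L^p(\Omega)}\leqslant \|f\|_{L^p(\Omega)}$. A shift accounts for $F'(0)$, which produces the additive constant in \eqref{es-wpphi}.

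The main technical obstacle is passing to the limit $\varepsilon\to0$. One has to identify the limit with the solution $\phi$ of part (1) and recover the $L^p$ bound for $F'(\phi)$ itself. I would do this via monotonicity (Minty's trick), using strong $H^1$ convergence of $\phi_\varepsilon$, and then apply Fatou's lemma to the $L^p$ norms. Alternatively, one can quote the corresponding results in \cite{Abels2009,Conti2020,HwuHeleshaw,inequality}.

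\textbf{$W^{2,p}$ bound.} With $\|F'(\phi)\|_{L^p(\Omega)}\leqslant C(1+\|f\|_{L^p(\Omega)})$ in hand, $\phi$ solves the Neumann problem $-\Delta\phi+\phi=f-F'(\phi)+\phi$, whose right-hand side lies in $L^p$ with $\|\phi\|_{L^\infty}\leqslant1$. Standard $L^p$ elliptic regularity on a $C^2$ domain (Agmon--Douglis--Nirenberg) then gives $\|\phi\|_{W^{2,p}(\Omega)}\leqslant C(p)(1+\|f\|_{L^p(\Omega)})$. The range $p\leqslant 6$ in three dimensions is not a limitation of this argument; it is only the range needed later, namely $H^1\hookrightarrow L^6$ for $f=\mu$. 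The argument above holds for any finite $p$.
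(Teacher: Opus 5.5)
Your argument is correct. It is essentially the standard proof from the works the paper cites instead of giving one (\cite{Abels2009,Conti2020,HwuHeleshaw,inequality}): maximal monotonicity or minimization plus the characterization of $\partial\widetilde{E}$ for (1), and for (2) the monotone test with $|F'(\phi)|^{p-2}F'(\phi)$ on a regularized problem followed by $L^p$ Neumann regularity. The ``shift'' for $F'(0)$ is unnecessary, since your test already yields $\|F'(\phi)\|_{L^p(\Omega)}\leqslant\|f\|_{L^p(\Omega)}$ with no constant; the additive $1$ in \eqref{es-wpphi} comes only from $\|\phi\|_{L^p(\Omega)}\leqslant|\Omega|^{1/p}$ in the elliptic step.
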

%

\subsection{The implicit-explicit time-discretization scheme}

Let us now present the implicit-explicit time-discretization scheme for problem \eqref{NSCHM}--\eqref{initial}.
In order to handle the nonhomogeneous Dirichlet boundary condition for $\theta$, we choose $\Theta_{\mathrm{b}}$ as the harmonic extension of the boundary datum $\theta_{\mathrm{b}}$, which is inspired by e.g., \cite[Section 2.4]{MBoussinesq} and \cite[Section 2]{LB99}:
\begin{equation}
\begin{cases}
-\Delta \Theta_{\mathrm{b}}=0,\quad \text{in}\ \Omega,\\
\Theta_{\mathrm{b}}=\theta_{\mathrm{b}},\quad\quad \, \text{on}\ \partial\Omega.
\end{cases}
\label{Theta_b}
\end{equation}
Since $\theta_{\mathrm{b}}\in H^\frac{5}{2}(\partial\Omega)$, we find
$$
\Theta_{\mathrm{b}}\in H^3(\Omega)\quad \text{and}\quad
\|\Theta_{\mathrm{b}}\|_{H^3(\Omega)}\leqslant C\|\theta_{\mathrm{b}}\|_{H^\frac{5}{2}(\partial\Omega)},
$$
for some constant $C>0$ depending only on $\Omega$. Introducing the new variable
$$\vartheta=\theta - \Theta_{\mathrm{b}},$$
we can reformulate the heat equation \eqref{BOUSSINESQ} as follows (cf. \cite{LB99}):
\begin{align}
& \partial_t \vartheta + \u \cdot \nabla \vartheta - \mathrm{div}\,(\kappa(\phi,\vartheta+ \Theta_{\mathrm{b}}) \nabla \vartheta)
= -\u \cdot \nabla \Theta_{\mathrm{b}} + \mathrm{div}\,(\kappa(\phi,\vartheta+ \Theta_{\mathrm{b}}) \nabla \Theta_{\mathrm{b}}).
\label{Eq:vartheta}
\end{align}
Moreover, $\vartheta$ satisfies the following homogeneous Dirichlet boundary condition and the initial condition
\begin{align}
\vartheta=0\quad \text{on}\ \partial\Omega\times (0,T),\qquad \vartheta|_{t=0}=\vartheta_0:=\theta_0- \Theta_{\mathrm{b}}\quad \text{in}\ \Omega.
\label{vartheta:bdini}
\end{align}
Due to the Sobolev embedding theorem $H^2(\Omega)\hookrightarrow L^\infty(\Omega)$ for $d=2,3$, we have $\vartheta_0\in L^\infty(\Omega)$ since $\theta_0\in L^\infty(\Omega)$ and $\Theta_{\mathrm{b}}\in H^2(\Omega)$.

For any given positive integer $N$, we take the time step as $h=\frac{1}{N}$.
Next, for every $k \in \mathbb{N}$, let $\u^k\in \H_\sigma$, $\phi^k\in H^2_\mathbf{n}(\Omega)$ with $\phi^k\in [-1,1]$ almost everywhere in $\Omega$, $\overline{\phi^k}\in (-1,1)$, and $\vartheta^k\in H^2(\Omega) \cap  H^1_0(\Omega)$. Define $\rho^k=\rho(\phi^k)$ by \eqref{density}. Then, we look for a quadruple
$$
(\u, \phi, \mu, \vartheta)
=\big(\u^{k+1}, \phi^{k+1}, \mu^{k+1}, \vartheta^{k+1}\big)
\in \bm{V}_\sigma \times \mathcal{D}(\partial\widetilde{E}) \times H^2_{\mathbf{n}}(\Omega) \times ( H^2(\Omega)\cap H^1_0(\Omega))
$$
as a solution of the following time-discrete system at the time step $k+1$:
\begin{align}
	&\left( \frac{\rho\u - \rho^k\u^k}{h} , \v \right) +
	\big(\mathrm{div}(\rho^k \u \otimes \u), \v\big) + (\mathrm{div}(\u\otimes \mathbf{J}), \v)
    \notag \\
	&\qquad  +
	\big(2 \nu(\phi^k, \vartheta^k+ \Theta_{\mathrm{b}}) D \u , D \v\big) - \lambda_0 a (\mu \nabla \phi^k, \v)
    \notag \\
    &\quad = -
	\lambda_0 b \big((\vartheta^k+ \Theta_{\mathrm{b}}) (\nabla \phi \otimes \nabla \phi), \nabla \v\big) +
	\big(\bm{f}_{\mathrm{b}}(\phi^k, \vartheta^k+ \Theta_{\mathrm{b}}), \v\big), \quad &&\forall\, \v \in \V,
\label{time-discretization-u}\\
    & \mathbf{J}= -\frac{\rho_2-\rho_1}{2}m(\phi^k,\vartheta^k+ \Theta_{\mathrm{b}})\nabla \mu, \quad &&\text{a.e. in } \Omega,
    \label{time-discretization-J}\\
	&  \frac{\phi - \phi^k}{h}  +
	 \u \cdot \nabla \phi^k  = \mathrm{div}(m(\phi^k,\vartheta^k+ \Theta_{\mathrm{b}}) \nabla \mu), \quad &&\text{a.e. in } \Omega,
\label{time-discretization-phi} \\
	& \mu  + c_W (\phi+\phi^k) = - \Delta \phi + F^{\prime}(\phi), \quad &&\text{a.e. in } \Omega,
\label{time-discretization-mu} \\
	&  \frac{\vartheta - \vartheta^k}{h}  +
	  \u \cdot \nabla \vartheta - \mathrm{div}  ( \kappa(\phi^k,\vartheta^k+ \Theta_{\mathrm{b}}) \nabla \vartheta)
\notag \\
&\quad =  - \u \cdot \nabla \Theta_{\mathrm{b}} +  \mathrm{div} ( \kappa(\phi^k,\vartheta^k+ \Theta_{\mathrm{b}}) \nabla \Theta_{\mathrm{b}}), \quad &&\text{a.e. in } \Omega.
\label{time-discretization-theta}
\end{align}
\begin{remark}\rm
In the subsequent analysis, we shall use the notation $\theta, \theta^k$ such that
\begin{align}
\theta= \vartheta +\Theta_{\mathrm{b}}\quad \text{and}\quad \theta^k= \vartheta^k +\Theta_{\mathrm{b}}.
\label{notation-theta}
\end{align}
For the initial time step $k=0$, the given data are determined by $$(\u^0,\phi^0,\vartheta^0)=(\u_0,\phi_0^N,\vartheta_0^N),$$
where $\phi_0^N\in H^2_{\mathbf{n}}(\Omega)$ is a suitable regularization of the initial datum $\phi_0 \in \D(\widetilde{E})$ satisfying $\phi_0^N\in [-1,1]$ almost everywhere in $\Omega$  and $\overline{\phi_0^N}=\overline{\phi_0}$, while $\vartheta_0^N \in  H^2(\Omega)\cap H^1_0(\Omega)$ is a suitable regularization of the initial datum $\vartheta_0\in L^\infty(\Omega)$ with an $L^\infty$-bound independent of $N$. Detailed construction is presented in Section \ref{dis-existence}.
\end{remark}

\begin{remark}\rm
Both explicit and implicit formulations are adopted in the discrete approximation \eqref{time-discretization-u}--\eqref{time-discretization-theta}.
Our aim is two-fold: (1) at each time step, the resulting discrete problem can be solved by a Leray--Schauder fixed point argument, (2) the discrete solution satisfies a sufficiently simple energy inequality that can yield uniform-in-time estimates (see  \eqref{discrete-energy-inequality} below). To this end, all variable-dependent ``coefficients'' are treated explicitly to reduce the order of nonlinearity for unknown variables. Moreover, we use the implicit form $\u\cdot \nabla \theta$ in \eqref{time-discretization-theta}, which enables us to maintain an $L^\infty$-estimate for $\theta$ as well as its shifted variant $\vartheta$ (see Lemma \ref{Stampacchia-truncation-method-remark}). Since we work directly with the singular potential $F$, which guarantees the bound $\|\phi^k\|_{L^\infty(\Omega)}\leqslant 1$ for all $k$ in induction, instead of \eqref{time-discretization-mu} (like in \cite{JMFM}) we can also use the well-known convex-splitting formulation such that $\mu  + 2c_W \phi^k = - \Delta \phi + F^{\prime}(\phi)$.
\end{remark}

Applying the Stampacchia truncation method for equation \eqref{time-discretization-theta}, we obtain the following result:
\begin{lemma}\label{Stampacchia-truncation-method-remark}
Suppose that $\phi^k, \theta^k\in L^\infty(\Omega)$. If $(\u,\vartheta)\in \V \times H^1_0(\Omega)$ is a weak solution to \eqref{time-discretization-theta}, then we have $\theta, \vartheta\in L^\infty(\Omega)$ and the following estimate holds
\begin{align}\label{maximumprinciple-discrete}
	\min\Big\{\operatorname*{ess\,inf}_{\Omega}\theta^k,\,\min_{\partial\Omega}\theta_{\mathrm{b}}\Big\} \leqslant \theta(x) \leqslant \max\Big\{\operatorname*{ess\,sup}_{\Omega}\theta^k,\,\max_{\partial\Omega}\theta_{\mathrm{b}}\Big\},\quad \text{a.e. in}\ \overline{\Omega}.
	\end{align}
\end{lemma}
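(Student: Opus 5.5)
The plan is to rewrite \eqref{time-discretization-theta} in terms of $\theta=\vartheta+\Theta_{\mathrm{b}}$ and then apply Stampacchia truncation. Since $\Theta_{\mathrm{b}}$ is time independent, $\frac{\vartheta-\vartheta^k}{h}=\frac{\theta-\theta^k}{h}$. The right-hand side terms involving $\Theta_{\mathrm{b}}$ also combine with the left-hand side. Writing $\kappa^k:=\kappa(\phi^k,\theta^k)$, the weak form becomes
\begin{align*}
\frac1h(\theta-\theta^k,\psi)+(\u\cdot\nabla\theta,\psi)+(\kappa^k\nabla\theta,\nabla\psi)=0,\quad \forall\,\psi\in H^1_0(\Omega).
\end{align*}
Here $\theta\in H^1(\Omega)$ and $\theta=\theta_{\mathrm{b}}$ on $\partial\Omega$. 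Since $\phi^k,\theta^k\in L^\infty(\Omega)$ and $\kappa$ is continuous, $\kappa^k$ is bounded. By $\mathbf{(A3)}$ it satisfies $\kappa^k\geqslant\underline{\kappa}>0$. The convective term is well defined, because $\u\in\V\subset \bm{L}^6$, $\nabla\theta\in \bm{L}^2$ and $\psi\in L^3$.

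For the upper bound, set $M:=\max\{\operatorname*{ess\,sup}_\Omega\theta^k,\max_{\partial\Omega}\theta_{\mathrm{b}}\}$, which is finite since $\theta_{\mathrm{b}}$ is continuous. Take the test function $\psi=(\theta-M)^+$. It belongs to $H^1_0(\Omega)$ because $\theta-M\leqslant 0$ on $\partial\Omega$ in the trace sense. The convective term vanishes:
$$(\u\cdot\nabla\theta,(\theta-M)^+)=\tfrac12\int_\Omega \u\cdot\nabla|(\theta-M)^+|^2\dx=0,$$
using $\mathrm{div}\,\u=0$, $\u=\mathbf 0$ on $\partial\Omega$, and $|(\theta-M)^+|^2\in W^{1,1}_0$ with enough integrability ($\u\in \bm{L}^6$, $(\theta-M)^+\nabla\theta\in L^{3/2}$). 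I would justify this by approximating $\u$ with $\bm{C}^\infty_{0,\sigma}$ fields. For the discrete time-derivative term, write $\theta-\theta^k=(\theta-M)+(M-\theta^k)$. Since $M-\theta^k\geqslant0$, this gives $(\theta-\theta^k,(\theta-M)^+)\geqslant\|(\theta-M)^+\|^2$. The diffusion term contributes $\int\kappa^k|\nabla(\theta-M)^+|^2\geqslant0$. Altogether,
$$\frac1h\|(\theta-M)^+\|^2+\underline\kappa\|\nabla(\theta-M)^+\|^2\leqslant 0,$$
so $\theta\leqslant M$ a.e.

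The lower bound is symmetric: set $m_*:=\min\{\operatorname*{ess\,inf}_\Omega\theta^k,\min_{\partial\Omega}\theta_{\mathrm{b}}\}$ and test with $-(\theta-m_*)^-=\min\{\theta-m_*,0\}\in H^1_0(\Omega)$. Then $\theta\in L^\infty(\Omega)$, and since $\Theta_{\mathrm{b}}\in H^3\subset L^\infty$, also $\vartheta\in L^\infty(\Omega)$. The bound on $\overline\Omega$ follows by interpreting boundary values via $\theta_{\mathrm{b}}$.

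The main obstacle is the rigorous cancellation of the convective term with only $\u\in\V$ and $\theta\in H^1$. I would handle it by the density argument above, using the continuity of the trilinear form $(\u\cdot\nabla\theta,\psi)$ on $\V\times H^1\times H^1$ for $d\leqslant3$.
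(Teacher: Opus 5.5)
Your proposal is correct and follows the paper's argument: you rewrite the scheme in terms of $\theta=\vartheta+\Theta_{\mathrm{b}}$, test with the truncation $(\theta-M)^+\in H^1_0(\Omega)$, use $\theta^k\leqslant M$ and $\mathrm{div}\,\u=0$ to obtain $\frac1h\|(\theta-M)^+\|^2+\underline{\kappa}\|\nabla(\theta-M)^+\|^2\leqslant 0$, and treat the lower bound symmetrically. Your density justification for the vanishing convective term makes explicit a step that the paper only cites via $\mathrm{div}\,\u=0$.
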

\begin{proof}
The proof follows an argument similar to that for \cite[Lemma 3.1]{LB96}. We sketch it for the sake of completeness.
The weak solution $\vartheta$ satisfies
\begin{align}
& \left(\frac{\vartheta - \vartheta^k}{h},\xi\right)  +
	  (\u \cdot \nabla \vartheta,\xi) + \left( \kappa(\phi^k,\vartheta^k+ \Theta_{\mathrm{b}}) \nabla \vartheta,\nabla \xi\right)
\notag \\
&\quad =  - (\u \cdot \nabla \Theta_{\mathrm{b}},\xi) -\left( \kappa(\phi^k,\vartheta^k+ \Theta_{\mathrm{b}}) \nabla \Theta_{\mathrm{b}},\nabla \xi\right), \quad  \forall\, \xi\in H^1_0(\Omega). \label{time-discretization-theta-w}
\end{align}
Set $K_{\mathrm{u}}=\max\Big\{\operatorname*{ess\,sup}_{\Omega}\theta^k,\,\max_{\partial\Omega}\theta_{\mathrm{b}}\Big\}$. Since $\theta\in H^1(\Omega)$,  we see that $\theta^+:= \max\{\theta-K_{\mathrm{u}},0\}$ satisfies $\theta^+\in H^1_0(\Omega)$. Next, testing \eqref{time-discretization-theta-w} by $\xi=\theta^+$, using \eqref{notation-theta} and integration by parts,
we obtain
\begin{align*}
& \frac{1}{h}\int_\Omega (\theta-K_{\mathrm{u}}) \theta^+\,\mathrm{d}x - \frac{1}{h}\int_\Omega (\theta^k-K_{\mathrm{u}}) \theta^+\,\mathrm{d}x
+ \int_\Omega (\u\cdot \nabla (\theta-K_{\mathrm{u}}))\theta^+ \,\mathrm{d}x
\\
&\quad + \int_\Omega \kappa(\phi^k,\theta^k)\nabla (\theta-K_{\mathrm{u}}) \cdot \nabla \theta^+\,\mathrm{d}x=0,
\end{align*}
which implies
\begin{align*}
\frac{1}{h}\int_\Omega (\theta^+)^2\,\mathrm{d}x
+ \int_\Omega \kappa(\phi^k,\theta^k)|\nabla \theta^+|^2 \,\mathrm{d}x
= \frac{1}{h}\int_\Omega (\theta^k-K_{\mathrm{u}}) \theta^+\,\mathrm{d}x
\leqslant 0.
\end{align*}
Here, we have used the facts that $\theta^k-K_{\mathrm{u}}\leqslant 0$ almost everywhere in $\Omega$ and $\u$ is divergence free.
This yields $|\theta^+|^2 =0$ almost everywhere in $\Omega$. Since $\theta^+\in H^1_0(\Omega)$, we can conclude that $\theta^+=0$ almost everywhere in $\overline{\Omega}$, i.e., the right-hand side of \eqref{maximumprinciple-discrete} holds. The left-hand side of \eqref{maximumprinciple-discrete} can be proven in a similar way. This yields $\theta\in L^\infty(\Omega)$. By the definition of $\vartheta$, we also get $\vartheta\in L^\infty(\Omega)$.
\end{proof}

Next, we show that the discrete solution $\phi$ satisfies the property of mass conservation.
\begin{lemma}\label{mass-dis}
Assume that $(\u, \phi, \mu)
\in \bm{V}_\sigma \times \mathcal{D}(\partial\widetilde{E}) \times H^2_{\mathbf{n}}(\Omega)$ satisfies the equation \eqref{time-discretization-phi} with a given function $\phi^k\in H^1(\Omega)$, then it holds
\begin{equation}
\overline{\phi}=\overline{\phi^k}.\label{mass-dis-a}
\end{equation}
\end{lemma}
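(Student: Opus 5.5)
We integrate the discrete Cahn--Hilliard equation \eqref{time-discretization-phi} over $\Omega$ and show that the convective and diffusive contributions vanish. The equation holds almost everywhere in $\Omega$, and every term is integrable. Indeed, $\phi,\phi^k\in L^2(\Omega)$, and $\u\cdot\nabla\phi^k\in L^1(\Omega)$ because $\u\in\V\subset\bm{L}^2(\Omega)$ and $\nabla\phi^k\in\bm{L}^2(\Omega)$. Moreover, $\mathrm{div}(m(\phi^k,\theta^k)\nabla\mu)\in L^1(\Omega)$ holds by virtue of the equation itself, since it equals the sum of the other two terms. Integrating therefore gives
\begin{align*}
\frac{1}{h}\int_\Omega(\phi-\phi^k)\,\mathrm{d}x+\int_\Omega \u\cdot\nabla\phi^k\,\mathrm{d}x=\int_\Omega \mathrm{div}\big(m(\phi^k,\theta^k)\nabla\mu\big)\,\mathrm{d}x .
\end{align*}

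For the convective term, we approximate $\u$ in $\V$ by a sequence in $\bm{C}^\infty_{0,\sigma}(\Omega)$. For each smooth divergence-free, compactly supported field $\v$ we have $\int_\Omega\v\cdot\nabla\phi^k\,\mathrm{d}x=-\int_\Omega(\mathrm{div}\,\v)\,\phi^k\,\mathrm{d}x=0$. Since $\nabla\phi^k\in\bm{L}^2(\Omega)$, the map $\v\mapsto\int_\Omega\v\cdot\nabla\phi^k\,\mathrm{d}x$ is continuous on $\bm{L}^2(\Omega)$, so passing to the limit yields $\int_\Omega\u\cdot\nabla\phi^k\,\mathrm{d}x=0$.

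For the diffusive term, the cleanest route avoids any need for a pointwise boundary trace. We instead multiply \eqref{time-discretization-phi} by the constant test function $1\in H^1(\Omega)$ and use the weak (variational) form of the mobility term, which the notion of solution carries, with $m(\phi^k,\theta^k)\partial_{\mathbf n}\mu=0$ encoded exactly as in \eqref{Gq}--\eqref{Pre-Gq}. In that form the diffusive integral equals $-\int_\Omega m(\phi^k,\theta^k)\nabla\mu\cdot\nabla 1\,\mathrm{d}x=0$.

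Alternatively, if one argues strongly, note that $\mu\in H^2_{\mathbf n}(\Omega)$ gives $\partial_{\mathbf n}\mu=0$ on $\partial\Omega$. The divergence theorem then shows that the integral equals the boundary integral of $m\,\partial_{\mathbf n}\mu$, which vanishes; the product $m(\phi^k,\theta^k)\nabla\mu$ lies in $W^{1,1}$ when $\phi^k,\theta^k$ are $H^2$ and bounded, which holds in the scheme. Combining the three terms then yields $\int_\Omega\phi\,\mathrm{d}x=\int_\Omega\phi^k\,\mathrm{d}x$, that is, \eqref{mass-dis-a}.

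The only delicate point is justifying the integration by parts in the mobility term under the stated regularity. I expect this to be the main (mild) obstacle, and I would handle it through the weak formulation tested with $\zeta\equiv1$.
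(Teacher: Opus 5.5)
Your proposal is correct and follows the same route as the paper. The paper simply averages \eqref{time-discretization-phi} over $\Omega$ and invokes the divergence theorem, using $\mathrm{div}\,\u=0$, $\u|_{\partial\Omega}=\mathbf{0}$ and $\partial_{\mathbf n}\mu=0$ to remove the convective and diffusive terms. Your density argument for $\int_\Omega \u\cdot\nabla\phi^k\,\mathrm{d}x=0$ and your check that $m(\phi^k,\theta^k)\nabla\mu\in W^{1,1}(\Omega)$ spell out details the paper leaves implicit; that check only needs $\phi^k,\theta^k\in H^1(\Omega)\cap L^\infty(\Omega)$, which the scheme provides.
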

\begin{proof}
Taking the spatial average on both sides of the equation \eqref{time-discretization-phi} along
with the divergence theorem, we obtain the conclusion.
\end{proof}

\begin{remark}\rm
The properties \eqref{maximumprinciple-discrete} and \eqref{mass-dis-a} hold for all $k\in \mathbb{N}$, as long as the discrete solutions exist.
By induction, we easily get
\begin{align}
&  \min\Big\{\operatorname*{ess\,inf}_{\Omega}\theta^0,\,\min_{\partial\Omega}\theta_{\mathrm{b}}\Big\} \leqslant \theta^k(x) \leqslant \max\Big\{\operatorname*{ess\,sup}_{\Omega}\theta^0,\,\max_{\partial\Omega}\theta_{\mathrm{b}}\Big\},\quad \text{a.e. in}\ \overline{\Omega},
\label{maximumprinciple-discrete-b}
\\
&  \overline{\phi^k}=\overline{\phi^0},
\label{mass-dis-b}
\end{align}
for any positive integer $k$.
\end{remark}

The following lemma gives a refinement of Lemma \ref{lem:sing-phi} under the specific choice $f=\mu+c_W(\phi+\phi^k)$ as in the discrete approximation \eqref{time-discretization-mu}.
\begin{lemma}\label{lem:mu-es-k}
Assume that $(\phi, \mu)\in \mathcal{D}(\partial\widetilde{E})\times H^1(\Omega)$ satisfies the equation \eqref{time-discretization-mu} with $\phi^k\in H^1(\Omega)$, $\phi^k\in [-1,1]$ almost everywhere, and $\overline{\phi} = \overline{\phi^k} \in (-1,1)$. Then we have
\begin{align}
& \|\partial \widetilde{E}(\phi)\|\leqslant C(\|\mu\|+1),
\label{dpm-es1}
\\
& |\overline{\mu}|\leqslant C(\|\nabla \mu\|+1),
\label{dpm-es2}
\\
& \|\phi\|_{W^{2,p}(\Omega)}+ \|F^{\prime}(\phi)\|_{L^p(\Omega)} \leqslant C(\|\nabla \mu\|+1),
\label{dpm-es3}
\\
& \|\phi\|_{H^2(\Omega)}^2 \leqslant C\|\nabla \mu\|\|\nabla \phi\| + C,
\label{dpm-es4}
\end{align}
where $C$ is a positive constant that may depend on $\Omega$ and $\overline{\phi}$, $p \in [2,\infty)$ if $d=2$ and $p\in [2,6]$ if $d=3$.
\end{lemma}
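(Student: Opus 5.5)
The plan is to treat \eqref{time-discretization-mu} as the Neumann problem \eqref{sing-phi} with right-hand side $f=\mu+c_W(\phi+\phi^k)$, i.e. $\partial\widetilde{E}(\phi)=f$. Since $|\phi|,|\phi^k|\leqslant 1$ a.e., we have $\|f\|\leqslant \|\mu\|+2c_W|\Omega|^{1/2}$. This gives \eqref{dpm-es1} directly.

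For \eqref{dpm-es2} I would use the standard Miranville--Zelik type argument for singular potentials. First I would test \eqref{time-discretization-mu} with $\phi-\overline{\phi}$. Writing $M=\overline{\phi}\in(-1,1)$, this gives
\[
\|\nabla\phi\|^2+\int_\Omega F'(\phi)(\phi-M)\,\mathrm{d}x=\int_\Omega (\mu-\overline{\mu})(\phi-M)\,\mathrm{d}x+c_W\int_\Omega(\phi+\phi^k)(\phi-M)\,\mathrm{d}x .
\]
The right-hand side is bounded by $C(\|\nabla\mu\|+1)$, using Poincar\'e--Wirtinger and $|\phi-M|\leqslant 2$. Next I would invoke the well-known inequality, valid since $F'\to\pm\infty$ at $\pm1$:
\[
\int_\Omega |F'(\phi)|\,\mathrm{d}x\leqslant C_M\int_\Omega F'(\phi)(\phi-M)\,\mathrm{d}x+C_M .
\]
Together these give $\|F'(\phi)\|_{L^1(\Omega)}\leqslant C(\|\nabla\mu\|+1)$. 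Integrating \eqref{time-discretization-mu} over $\Omega$, and using that the Neumann condition kills $\int\Delta\phi$, yields
\[
|\Omega|\,\overline{\mu}=\int_\Omega F'(\phi)\,\mathrm{d}x-c_W\int_\Omega(\phi+\phi^k)\,\mathrm{d}x ,
\]
and hence \eqref{dpm-es2}. This step is the main obstacle, since it is exactly where the constant's dependence on $\overline{\phi}$ (through $C_M$, which blows up as $|M|\to1$) enters.

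For \eqref{dpm-es3} I would apply Lemma \ref{lem:sing-phi}(2) with $f\in L^p(\Omega)$. We have $\|f\|_{L^p}\leqslant \|\mu\|_{L^p}+C$, and by Sobolev embedding, Poincar\'e--Wirtinger and \eqref{dpm-es2},
\[
\|\mu\|_{L^p}\leqslant C(\|\mu-\overline{\mu}\|_{H^1}+|\overline{\mu}|)\leqslant C(\|\nabla\mu\|+1)
\]
for the stated range of $p$.

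For \eqref{dpm-es4} I would test \eqref{time-discretization-mu} with $-\Delta\phi$. Monotonicity gives $(F'(\phi),-\Delta\phi)=\int F''(\phi)|\nabla\phi|^2\geqslant 0$, which is justified by approximating $F$ or via $\phi\in\mathcal{D}(\partial\widetilde{E})$. Hence
\[
\|\Delta\phi\|^2\leqslant(\mu,-\Delta\phi)+c_W(\phi+\phi^k,-\Delta\phi) .
\]
Integrating by parts with $\partial_{\mathbf n}\phi=0$, the first term equals $(\nabla\mu,\nabla\phi)\leqslant\|\nabla\mu\|\|\nabla\phi\|$. The second term equals $c_W(\|\nabla\phi\|^2+(\nabla\phi^k,\nabla\phi))$; here the needed bound for $\|\nabla\phi\|$ comes from the first display in the \eqref{dpm-es2} argument, in which $F'(\phi)(\phi-M)$ is bounded below. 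Finally, elliptic regularity $\|\phi\|_{H^2}\leqslant C(\|\Delta\phi\|+\|\phi\|)$, with $\|\phi\|\leqslant|\Omega|^{1/2}$, closes the estimate.

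There is a caveat in \eqref{dpm-es4}: the term $c_W(\nabla\phi^k,\nabla\phi)$ depends on $\|\nabla\phi^k\|$. I would either absorb it into the constant as a known quantity, or, if necessary, rewrite $c_W(\phi+\phi^k,-\Delta\phi)$ differently so that only $\|\nabla\phi\|$ appears; $C$ is allowed to depend only on $\Omega$ and $\overline{\phi}$, so this point needs care.
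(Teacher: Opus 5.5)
Your arguments for \eqref{dpm-es1}--\eqref{dpm-es3} are correct and follow the paper. For \eqref{dpm-es2} the paper only cites the Conti--Giorgini argument, which is the Miranville--Zelik inequality you write out.

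The gap is in \eqref{dpm-es4}, and it is the caveat you raise but leave open. Integrating $c_W(\phi+\phi^k,-\Delta\phi)$ by parts produces $c_W\|\nabla\phi\|^2+c_W(\nabla\phi^k,\nabla\phi)$. Here $\|\nabla\phi^k\|$ is not bounded by any constant depending only on $\Omega$ and $\overline{\phi}$, so you may not absorb it into $C$. In Proposition~\ref{lemma-time-discretization} such a term would also feed the previous energy back into the estimate. Your control of $c_W\|\nabla\phi\|^2$ via the first display of your \eqref{dpm-es2} argument does not suffice either. It gives $c_W\|\nabla\phi\|^2\leqslant C(\|\nabla\mu\|+1)$, which leaves a term $C\|\nabla\mu\|$ that is not dominated by $C\|\nabla\mu\|\|\nabla\phi\|+C$. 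For example, with $\phi\equiv\overline{\phi^k}$ you have $\nabla\phi=0$ while $\nabla\mu=-c_W\nabla\phi^k$ can be arbitrarily large. So the inequality you would obtain is strictly weaker than \eqref{dpm-es4}.

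The fix is simply not to integrate this term by parts. Since $|\phi|,|\phi^k|\leqslant 1$ a.e., Cauchy--Schwarz and Young give
\[
-c_W(\phi+\phi^k,\Delta\phi)\leqslant \tfrac12\|\Delta\phi\|^2+c_W^2\big(\|\phi\|^2+\|\phi^k\|^2\big)\leqslant \tfrac12\|\Delta\phi\|^2+2c_W^2|\Omega| .
\]
After absorbing $\tfrac12\|\Delta\phi\|^2$ you get $\|\Delta\phi\|^2\leqslant 2\|\nabla\mu\|\|\nabla\phi\|+C$, and elliptic regularity finishes exactly as you indicate; this is what the paper does. If you prefer to keep the integration by parts, you must undo it through $\|\nabla\phi\|^2\leqslant\|\phi\|\|\Delta\phi\|$ and $(\nabla\phi^k,\nabla\phi)=-(\phi^k,\Delta\phi)$, which amounts to the same thing.

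Separately, membership $\phi\in\mathcal{D}(\partial\widetilde{E})$ does not by itself justify $(F'(\phi),-\Delta\phi)=\int_\Omega F''(\phi)|\nabla\phi|^2\,\mathrm{d}x$. The paper uses the truncations $\phi_j=h_j\circ\phi$, for which $(F'(\phi_j),-\Delta\phi)\geqslant 0$. It then passes to the limit using $(F'(\phi)-F'(\phi_j),\Delta\phi)\to 0$ by dominated convergence. Your alternative of approximating $F$ would also work, but it has to be carried out explicitly.
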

\begin{proof}
By definition, from $\phi\in \mathcal{D}(\partial\widetilde{E})$ we infer that $\phi\in [-1,1]$ almost everywhere in $\Omega$.
This, combined with the assumption $\phi^k\in [-1,1]$ almost everywhere in $\Omega$ easily yields \eqref{dpm-es1}. Next, from $(\mathbf{A4})$ and the assumption $\overline{\phi}\in (-1,1)$, we can obtain the estimate \eqref{dpm-es2} using the same argument as that for \cite[Theorem 3.6]{Conti2020}. To obtain \eqref{dpm-es3}, we apply \eqref{es-wpphi} with $f=\mu + c_W (\phi+\phi^k)$ together with the Poincar\'{e}--Wirtinger inequality and \eqref{dpm-es2} such that
\begin{align*}
\|\phi\|_{W^{2,p}(\Omega)}+ \|F^{\prime}(\phi)\|_{L^p(\Omega)}
&\leqslant C(1+\|\mu\|_{L^p(\Omega)}+\|\phi\|_{L^p(\Omega)}+\|\phi^k\|_{L^p(\Omega)})
\\
&\leqslant C(1+ \|\mu-\overline{\mu}\|_{L^p(\Omega)} +|\overline{\mu}|)
\leqslant C(\|\nabla \mu\|+1).
\end{align*}
Concerning the last conclusion \eqref{dpm-es4}, we introduce the globally Lipschitz function $h_j:\mathbb{R}\to \mathbb{R}$ as in \cite[Appendix A]{Conti2020}: for every integer $j\geqslant 2$,
\begin{align*}
h_j(s)=
\begin{cases}
-1+\frac{1}{j},\quad \ \text{if}\ s<-1+\frac{1}{j},\\
s,\qquad\qquad  \text{if}\ s\in [-1+\frac{1}{j},1-\frac{1}{j}],\\
1-\frac{1}{j},\qquad \,\text{if}\ s>1-\frac{1}{j}.
\end{cases}
\end{align*}
Define $\phi_j= h_j\circ \phi$. Since $\phi\in H^1(\Omega)$, then $\phi_j\in H^1(\Omega)$ and $\nabla \phi_j=\nabla \phi\cdot \chi_{[-1+\frac{1}{j},1-\frac{1}{j}]}(\phi)$.
Testing  equation \eqref{time-discretization-mu} by $-\Delta \phi$, we have
\begin{align}
\|\Delta \phi\|^2 + (F'(\phi_j),-\Delta\phi) = - (\mu,\Delta\phi) - c_W(\phi+\phi^k, \Delta \phi)
+ (F'(\phi)-F'(\phi_j), \Delta \phi).
\label{es-H2-phi-a}
\end{align}
As in the proof of \cite[Lemma A.1]{Conti2020}, we infer from the convexity of $F$ that
\begin{align*}
& (F'(\phi_j),-\Delta\phi) = (F''(\phi_j)\nabla \phi\cdot \chi_{[-1+\frac{1}{j},1-\frac{1}{j}]}(\phi), \nabla \phi)\geqslant 0.
\end{align*}
Besides, it holds
$$
(F'(\phi)-F'(\phi_j), \Delta \phi)\to 0\quad \text{as}\ j\to \infty.
$$
Using integration by parts, the Cauchy--Schwarz inequality and Young's inequality, we observe that
\begin{align*}
&- (\mu,\Delta\phi) = (\nabla \mu, \nabla \phi)\leqslant \|\nabla \mu\|\|\nabla \phi\|,
\\
& - c_W(\phi+\phi^k, \Delta \phi) \leqslant \frac12\|\Delta \phi\|^2+ c_W^2(\|\phi\|^2+\|\phi^k\|^2).
\end{align*}
Inserting the above estimates into \eqref{es-H2-phi-a} and passing to the limit as $j\to \infty$, we can conclude
\begin{align}
\|\Delta \phi\|^2\leqslant 2 \|\nabla \mu\|\|\nabla \phi\| + C, \label{es-H2-phi-b}
\end{align}
where the positive constant $C$ depends only on $\Omega$ and $c_W$. This, combined with the elliptic theory for the Neumann problem, yields
$$
\|\phi\|_{H^2(\Omega)}^2\leqslant C(\|\Delta \phi\|+\|\phi\|)^2\leqslant C\|\nabla \mu\|\|\nabla \phi\|+C.
$$
The proof is complete.
\end{proof}

Let $N$ be an arbitrarily given positive integer. We establish the existence of a solution to the discrete problem \eqref{time-discretization-u}--\eqref{time-discretization-theta}.
\begin{proposition}\label{lemma-time-discretization}
For any $k\in \mathbb{N}$, assume that the data at the time step $k$ satisfy $\u^k \in \H_\sigma$, $\phi^k \in H^2_\mathbf{n}(\Omega)$ with $\phi^k \in [-1,1]$ almost everywhere in $\Omega$, $\overline{\phi^k}\in (-1,1)$, and $\vartheta^k \in H^2(\Omega)\cap H^1_0(\Omega)$.
\begin{enumerate}[label=\textup{(\arabic*)}]
\item The discrete problem \eqref{time-discretization-u}--\eqref{time-discretization-theta} admits a solution
$$(\u, \phi, \mu, \vartheta) \in \V \times \D(\partial \widetilde{E}) \times H_{\mathbf{n}}^2(\Omega) \times (  H^2(\Omega) \cap H_0^1(\Omega))$$
at the time step $k+1$. Moreover, $\theta= \vartheta+ \Theta_{\mathrm{b}}$ satisfies \eqref{maximumprinciple-discrete}.
\item The solution $(\u, \phi, \mu, \vartheta)$ satisfies the following discrete energy inequality:
	\begin{align}
		& (1+C_1h) E_{\mathrm{tot}}(\u,\phi, \vartheta)
        + \frac{h}{4} \left(
		 \underline{\nu}  \| \nabla \u \|^2 +
		 \lambda_0 a \underline{m}  \| \nabla \mu \|^2 +
		 \underline{\kappa} \| \nabla \vartheta \|^2 \right)
        \notag \\
		&\quad \leqslant
       E_{\mathrm{tot}}(\u^k,\phi^k, \vartheta^k) + C_2h,
    \label{discrete-energy-inequality}
	\end{align}
where the modified total energy $E_{\tot}$ is given by
   \begin{align}
		E_{\tot}(\u,\phi,\vartheta) =
		\int_\Omega \frac{1}{2}\rho|\u|^2\,\mathrm{d}x +
		\lambda_0 a \left( \frac{1}{2} \| \nabla \phi \|^2 + \int_{\Omega} W(\phi)\,\mathrm{d}x \right) +
		\frac{1}{2} \| \vartheta \|^2.
        \label{E_tot}
	\end{align}
The positive constants $C_1$, $C_2$ in \eqref{discrete-energy-inequality} depend on $\Omega$, coefficients of the system, $\|\theta_{\mathrm{b}}\|_{H^\frac{3}{2}(\partial\Omega)}$, $\|\theta^0\|_{L^\infty(\Omega)}$, but they are independent of $h$ and $k$.
\end{enumerate}
\end{proposition}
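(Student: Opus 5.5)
Part (2) is checked first on an assumed solution, because the same computation supplies the a priori bounds that the fixed-point argument in part (1) requires. The three equations are tested as follows: \eqref{time-discretization-u} with $\v=\u$, \eqref{time-discretization-phi} with $\lambda_0 a\mu$, \eqref{time-discretization-mu} with $\lambda_0 a(\phi-\phi^k)/h$, and \eqref{time-discretization-theta} with $\vartheta$.

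For the momentum part I would follow \cite{JMFM}. The identity $\frac{\rho\u-\rho^k\u^k}{h}\cdot\u=\frac{\rho|\u|^2-\rho^k|\u^k|^2}{2h}+\frac{\rho^k|\u-\u^k|^2}{2h}+\frac{(\rho-\rho^k)|\u|^2}{2h}$ applies. The last term is cancelled exactly by the convective terms, since $\frac{\rho-\rho^k}{h}=\frac{\rho_2-\rho_1}{2}\frac{\phi-\phi^k}{h}=-\u\cdot\nabla\rho^k-\mathrm{div}\,\mathbf{J}$ by \eqref{time-discretization-phi}, and $(\mathrm{div}(\rho^k\u\otimes\u)+\mathrm{div}(\u\otimes\mathbf{J}),\u)=-\frac12((\u\cdot\nabla\rho^k+\mathrm{div}\mathbf J),|\u|^2)$. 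The Korteweg term $\lambda_0a(\mu\nabla\phi^k,\u)$ cancels the transport term from testing \eqref{time-discretization-phi} with $\lambda_0 a\mu$. Testing \eqref{time-discretization-mu} with $\lambda_0 a(\phi-\phi^k)/h$, together with convexity of $F$ and the identity $c_W(\phi+\phi^k)(\phi-\phi^k)=c_W(\phi^2-(\phi^k)^2)$, yields the discrete time derivative of $\lambda_0 a(\frac12\|\nabla\phi\|^2+\int W(\phi))$. The remaining contributions are nonnegative dissipation, and the viscous term is bounded below through Korn's inequality \eqref{Korn}.

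The heat part is handled by testing with $\vartheta$. The term $(\u\cdot\nabla\vartheta,\vartheta)$ vanishes. The source terms are bounded by $|(\u\cdot\nabla\Theta_{\mathrm b},\vartheta)|=|(\u\cdot\nabla\vartheta,\Theta_{\mathrm b})|\le \|\u\|\|\nabla\vartheta\|\|\Theta_{\mathrm b}\|_{L^\infty}$ and $|(\kappa\nabla\Theta_{\mathrm b},\nabla\vartheta)|\le C\|\nabla\vartheta\|$. Here $\kappa(\phi^k,\theta^k)$ is bounded because $\phi^k\in[-1,1]$ and because $\theta^k$ is bounded uniformly by \eqref{maximumprinciple-discrete-b}. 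Young's inequality then gives $\frac{\underline\kappa}{4}\|\nabla\vartheta\|^2+\delta\|\nabla\u\|^2+C$, where Poincaré \eqref{Poin} controls $\|\u\|$ and the mass of $\vartheta$ is small in $L^\infty$.

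The main obstacle is the Marangoni term $\lambda_0 b(\theta^k\nabla\phi\otimes\nabla\phi,\nabla\u)$, which is not cancelled by anything. I would bound it by $C\|\theta^k\|_{L^\infty}\|\nabla\phi\|_{L^4}^2\|\nabla\u\|$. Gagliardo--Nirenberg in $d=3$ gives $\|\nabla\phi\|_{L^4}^2\le C\|\nabla\phi\|^{1/2}\|\phi\|_{H^2}^{3/2}$. Applying \eqref{dpm-es4}, $\|\phi\|_{H^2}^{3/2}\le C(\|\nabla\mu\|\|\nabla\phi\|+1)^{3/4}$, so the term is at most $C\|\nabla\u\|(\|\nabla\mu\|^{3/4}\|\nabla\phi\|^{5/4}+\|\nabla\phi\|^{1/2})$. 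By Young's inequality with exponents $(2,8/3,8)$ this is bounded by $\frac{\underline\nu}{4}\|\nabla\u\|^2+\frac{\lambda_0a\underline m}{4}\|\nabla\mu\|^2+C\|\nabla\phi\|^{10}$, and this naive exponent is too large. If this route fails, the fix I would try first is to use $\|\phi\|_{L^\infty}\le1$: then $\|\nabla\phi\|_{L^4}^2\le C\|\phi\|_{L^\infty}\|\phi\|_{H^2}\le C\|\phi\|_{H^2}$, which is linear in $\|\phi\|_{H^2}$ and holds by the Gagliardo--Nirenberg/Ladyzhenskaya-type inequality for bounded functions. Combined with \eqref{dpm-es4}, the term becomes $C\|\nabla\u\|(\|\nabla\mu\|^{1/2}\|\nabla\phi\|^{1/2}+1)\le\frac{\underline\nu}4\|\nabla\u\|^2+\frac{\lambda_0a\underline m}{4}\|\nabla\mu\|^2+C\|\nabla\phi\|^2+C$. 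The buoyancy term satisfies $|(\bm f_{\mathrm b},\u)|\le C\|\u\|\le\delta\|\nabla\u\|^2+C$. Every leftover term of the form $C h(\|\nabla\phi\|^2+\|\u\|^2+\|\vartheta\|^2)$ is bounded by $C(E_{\tot}+1)$, using that $W$ is bounded below on $[-1,1]$. To place this on the left-hand side at step $k+1$ as $C_1hE_{\tot}(\u,\phi,\vartheta)$, I would subtract a portion of the dissipation: Poincaré bounds $\|\u\|^2$ and $\|\vartheta\|^2$ by $\|\nabla\u\|^2$ and $\|\nabla\vartheta\|^2$, and $\|\nabla\phi\|^2=(\nabla\mu,\nabla\phi)+\ldots$ from \eqref{time-discretization-mu} tested by $\phi-\overline\phi$. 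This gives the factor $(1+C_1h)$ and the form \eqref{discrete-energy-inequality} after rearrangement.

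Part (1) is a Leray--Schauder argument. The nonlinearities $\rho\u$, $\mathrm{div}(\rho^k\u\otimes\u)$, $\nabla\phi\otimes\nabla\phi$, $F'(\phi)$ and $\u\cdot\nabla\vartheta$ are moved into a compact map, while the linear principal part (Stokes with coefficient $\nu(\phi^k,\theta^k)$, $\G_{\phi^k}$-type elliptic operator, the inverse of $\partial\widetilde E$ via Lemma \ref{lem:sing-phi}, and the Dirichlet elliptic operator for $\vartheta$) is inverted. Compactness follows from the $H^2$-regularity gains of Lemma \ref{lem:sing-phi} and elliptic theory, with $\Omega\in C^3$ and $\Theta_{\mathrm b}\in H^3$. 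The uniform bounds for the homotopy $\tau\in[0,1]$ come from the computation in part (2) with constants uniform in $\tau$, together with \eqref{dpm-es2}--\eqref{dpm-es3}. Lemma \ref{Stampacchia-truncation-method-remark} then gives \eqref{maximumprinciple-discrete}, and Lemma \ref{mass-dis} guarantees $\overline\phi=\overline{\phi^k}$, which is needed to apply Lemma \ref{lem:mu-es-k}.
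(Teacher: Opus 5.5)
Your architecture is the paper's: the same four test functions, the \cite{JMFM} identities for the momentum part, the switch to \eqref{GL-L4} combined with \eqref{dpm-es4} for the Marangoni term, the test of \eqref{time-discretization-mu} by $\phi-\overline{\phi}$, and a Leray--Schauder argument. However, the energy inequality does not close as you wrote it, at exactly the two places where the uniform $L^\infty$ bounds must enter.

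\textbf{The heat source term.} Moving the derivative onto $\vartheta$ gives $\|\Theta_{\mathrm b}\|_{L^\infty(\Omega)}\|\u\|\|\nabla\vartheta\|$. Young's inequality then yields $\frac{\underline\kappa}{4}\|\nabla\vartheta\|^2+\underline{\kappa}^{-1}\|\Theta_{\mathrm b}\|_{L^\infty(\Omega)}^2\|\u\|^2$, not $\frac{\underline\kappa}{4}\|\nabla\vartheta\|^2+\delta\|\nabla\u\|^2+C$. The coefficient of $\|\u\|^2$ is not small, so it cannot be absorbed into $\underline\nu\|\nabla\u\|^2$ via \eqref{Poin}. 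Pushing it into $E_{\tot}$ instead gives the wrong sign. The paper does not integrate by parts here. It uses the discrete maximum principle at the \emph{new} step (Lemma \ref{Stampacchia-truncation-method-remark} with \eqref{maximumprinciple-discrete-b}), so $\|\vartheta\|\le\|\theta\|+\|\Theta_{\mathrm b}\|\le C$ uniformly in $h,k$. This gives $|(\u\cdot\nabla\Theta_{\mathrm b},\vartheta)|\le\|\u\|_{L^6(\Omega)}\|\nabla\Theta_{\mathrm b}\|_{L^3(\Omega)}\|\vartheta\|\le C\|\nabla\u\|\le\frac{\underline\nu}{6}\|\nabla\u\|^2+C$. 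So the maximum principle is already needed in part (2), not only in part (1). A smaller point: the convective terms equal $+\frac12(\u\cdot\nabla\rho^k+\mathrm{div}\,\mathbf{J},|\u|^2)$. With your minus sign they would double the $(\rho-\rho^k)$ term rather than cancel it.

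\textbf{The closing step.} Bounding the leftover $C_3h\|\nabla\phi\|^2$ by $Ch(E_{\tot}+1)$ on the right gives $(1-Ch)E_{\tot}(\u,\phi,\vartheta)\le E_{\tot}(\u^k,\phi^k,\vartheta^k)+Ch$. That means $e^{Ct}$ growth, not the factor $(1+C_1h)$ behind the uniform-in-time bounds. What is needed is that $C_3\|\nabla\phi\|^2$, with $C_3$ possibly large, be dominated by an \emph{arbitrarily small} multiple of $\|\nabla\mu\|^2$ plus a constant. Testing \eqref{time-discretization-mu} by $\phi-\overline\phi$ achieves this, but not through $(\nabla\mu,\nabla\phi)$, which only gives $\|\nabla\phi\|\lesssim\|\nabla\mu\|+1$ with a fixed constant. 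The test actually produces $(\mu,\phi-\overline\phi)=(\mu-\overline\mu,\phi)\le C_{PW}\|\nabla\mu\|\,\|\phi\|\le C\|\nabla\mu\|$, which is \emph{linear} in $\|\nabla\mu\|$ because $|\phi|\le 1$. Combine this with $(F'(\phi),\phi-\overline\phi)\ge\int_\Omega F(\phi)\,\mathrm{d}x-|\Omega|F(\overline\phi)$, where $F$ is bounded on $[-1,1]$, to get \eqref{time-discretization-estimate-mu-b}. Added with weight $h$, it cancels the Marangoni remainder and leaves $Ch\big(\frac12\|\nabla\phi\|^2+\int_\Omega F(\phi)\,\mathrm{d}x\big)$ on the left. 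Using $W\le F$ and Poincar\'e for $\u$ and $\vartheta$, half of the dissipation then converts this into $C_1hE_{\tot}(\u,\phi,\vartheta)$.

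\textbf{Part (1).} If you use, as the paper does, the invertible operator $-\mathrm{div}(m(\phi^k,\theta^k)\nabla\mu)+\int_\Omega\mu\,\mathrm{d}x$, mass is conserved only at $\Lambda=1$. So Lemma \ref{mass-dis} is not available along the homotopy. The paper controls $\int_\Omega\mu\,\mathrm{d}x$ through the extra term $(1-\Lambda)\lambda_0a\big(\int_\Omega\mu\,\mathrm{d}x\big)^2$ in \eqref{dis-energyLa} for $\Lambda<\frac12$, and through \eqref{dpm-es2} for $\Lambda\ge\frac12$.
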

\begin{proof}
\textbf{Part A. Discrete energy inequality.} We first verify that if $(\u, \phi, \mu, \vartheta)\in \V \times \D(\partial \widetilde{E}) \times H_{\mathbf{n}}^2(\Omega) \times ( H^2(\Omega) \cap H_0^1(\Omega))$ is a solution to problem \eqref{time-discretization-u}--\eqref{time-discretization-theta} at the time step $k+1$ subject to the given data $(\u^k, \phi^k, \mu^k, \vartheta^k)$,
then it satisfies the discrete energy inequality \eqref{discrete-energy-inequality}. The key feature of \eqref{discrete-energy-inequality} is that this energy inequality is based on the maximum principle, namely, uniform $L^\infty$-estimates (with respect to $k$, $h$) for $\phi$, $\phi^k$, $\theta$, $\theta^k$ play a crucial role in the derivation.

Taking $\v = \u$ in \eqref{time-discretization-u}, using the following identities (see  \cite{JMFM})
\begin{align*}
    & \int_\Omega \left((\mathrm{div}\mathbf{J})\frac{\u}{2}+(\mathbf{J}\cdot \nabla)\u\right)\cdot\u\,\mathrm{d}x=0,\\
    & \int_\Omega \left(\mathrm{div}(\rho^k\u\otimes \u)-(\nabla \rho^k\cdot\u)\frac{\u}{2}\right)\cdot\u\,\mathrm{d}x=0,\\
    &(\rho\u-\rho^k\u^k)\cdot\u
    =\left(\rho\frac{|\u|^2}{2}-\rho^k\frac{|\u^k|^2}{2}\right) +(\rho-\rho^k)\frac{|\u|^2}{2}+\rho^k\frac{|\u-\u^k|^2}{2},
\end{align*}
we obtain
	\begin{align}
		&\int_{\Omega} \left(\frac{\rho|\u|^2}{2h} - \frac{\rho^k|\u^k|^2}{2h}
        + \frac{\rho^k|\u - \u^k|^2}{2h}\right) \dx
		+ \int_{\Omega} 2 \nu(\phi^k,\theta^k) | D \u |^2 \dx
        \notag \\
		&\quad =  \lambda_0 a (\mu \nabla \phi^k, \u)
         -\lambda_0 b \big(\theta^k (\nabla \phi \otimes \nabla \phi), \nabla \u\big)
         +\big(\bm{f}_{\mathrm{b}}(\phi^k, \theta^k), \u\big).
         \label{time-discretization-estimate-u}
	\end{align}
Testing \eqref{time-discretization-phi} by $\lambda_0 a \mu$ and \eqref{time-discretization-mu} by $\dfrac{\lambda_0 a}{h} (\phi - \phi^{k})$, adding the resultants together, by some straightforward calculations, we get
	\begin{align}
		&   \lambda_0 a \int_{\Omega} \left(\frac{| \nabla \phi |^2}{2h} - \frac{| \nabla \phi^k |^2}{2h} + \frac{| \nabla(\phi-\phi^k) |^2}{2h} \right)\dx
		+ \lambda_0 a \int_{\Omega} m(\phi^k,\theta^k) | \nabla \mu |^2 \dx \notag \\
		&\qquad
		+ \frac{\lambda_0 a}{h} \int_{\Omega} F^{\prime}(\phi) (\phi - \phi^k) \dx
        - \frac{\lambda_0 ac_W}{h} \int_{\Omega} (|\phi|^2 - |\phi^k|^2)\,\mathrm{d}x
       \notag \\
		&\quad =  - \lambda_0 a(\u\cdot\nabla \phi^k,\mu).
\label{time-discretization-estimate-ph}
	\end{align}
Finally, taking $\xi = \vartheta$ in \eqref{time-discretization-theta} yields
	\begin{align}
		& \int_{\Omega} \left(\frac{|\vartheta|^2}{2h} - \frac{|\vartheta^k|^2}{2h} + \frac{|\vartheta-\vartheta^k|^2}{2h} \right)\dx
		+ \int_{\Omega} \kappa(\phi^k,\theta^k) | \nabla \vartheta |^2 \dx \notag \\
		&\quad =  -(\u \cdot \nabla \Theta_{\mathrm{b}},\vartheta)
        - ( \kappa(\phi^k,\theta^k) \nabla \Theta_{\mathrm{b}},\nabla \vartheta).
        \label{time-discretization-estimate-th}
	\end{align}

We observe that the first term on the right-hand side of \eqref{time-discretization-estimate-u} cancels with the first term on the right-hand side of \eqref{time-discretization-estimate-ph}. In what follows, we estimate the other terms in \eqref{time-discretization-estimate-u}--\eqref{time-discretization-estimate-th}.

Let us first treat the right-hand side of \eqref{time-discretization-estimate-u}.
Using the fact $\overline{\phi}=\overline{\phi^k}\in (-1,1)$, the $L^\infty$-estimates \eqref{maximumprinciple-discrete-b}, $\|\phi^k \|_{L^{\infty}(\Omega)} \leqslant 1$, $\|\phi \|_{L^{\infty}(\Omega)} \leqslant 1$, and the estimate \eqref{dpm-es4} for $\|\phi\|_{H^2(\Omega)}$, we find
	\begin{align}
		-\lambda_0 b \big( \theta^k (\nabla \phi \otimes \nabla \phi), \nabla \u\big)
		&\leqslant |b| | \lambda_0 | \| \theta^k \|_{L^{\infty}(\Omega)} \| \nabla \phi \|_{L^4(\Omega)}^2 \| \nabla \u \| \notag \\
		&\leqslant \frac{\underline{\nu}}{6} \| \nabla \u \|^2
		+ C \| \phi \|_{H^2(\Omega)}^2 \| \phi \|_{L^{\infty}(\Omega)}^2  \notag \\
		&\leqslant \frac{\underline{\nu}}{6} \| \nabla \u \|^2
		+ C (\| \nabla \mu \| \| \nabla \phi \|  + 1) \notag \\
		&\leqslant \frac{\underline{\nu}}{6} \| \nabla \u \|^2
		+ \frac{\lambda_0 a \underline{m}}{4} \| \nabla \mu \|^2
		+ C_3\| \nabla \phi \|^2 + C.
		\label{time-discretization-estimate1}
	\end{align}
Here, we have also used Young's inequality and the following Gagliardo--Nirenberg inequality (valid for $d=2,3$)
\begin{align}
\|\nabla \phi\|_{L^4(\Omega)}\leqslant C \|\phi\|_{H^2(\Omega)}^\frac12\|\phi\|_{L^\infty(\Omega)}^\frac12, \quad \forall\,\phi\in H^2(\Omega). \label{GL-L4}
\end{align}
In order to absorb the term $ C_3\| \nabla \phi \|^2 $ on the right-hand side of \eqref{time-discretization-estimate1}, we test \eqref{time-discretization-mu} by $2C_3(\phi-\overline{\phi})$, after integration by parts, we get
    \begin{align}
    & 2C_3\|\nabla \phi\|^2 + 2C_3(F'(\phi), \phi-\overline{\phi}) = 2C_3(\mu, \phi-\overline{\phi}) + 2C_3c_W(\phi+\phi^k, \phi-\overline{\phi}).
    \label{time-discretization-estimate-mu}
    \end{align}
By Taylor's expansion, it follows from the convexity of $F$ and \eqref{mass-dis-b} that
\begin{align}
	(F^{\prime}(\phi), \phi - \overline{\phi})
	& \geqslant   \int_{\Omega} F(\phi) \, \dx - \int_{\Omega} F(\overline{\phi}) \, \dx
      \geqslant \int_{\Omega} F(\phi) \, \dx - C.
    \notag
\end{align}
On the other hand, it follows from the Poincar\'{e}--Wirtinger inequality and $\|\phi \|_{L^{\infty}(\Omega)} \leqslant 1$ that
\begin{align}
2C_3(\mu, \phi-\overline{\phi})& = 2C_3(\mu-\overline{\mu}, \phi)
\leqslant C\|\nabla \mu\|\|\phi\|  \leqslant \frac{\lambda_0 a \underline{m}}{4} \| \nabla \mu\|^2 + C,
\notag
\end{align}
and
\begin{align}
2C_3c_W(\phi+\phi^k, \phi-\overline{\phi}) \leqslant C(\|\phi\|+\|\phi^k\|)\|\phi\|\leqslant C. \notag
\end{align}
Inserting the above estimates into \eqref{time-discretization-estimate-mu}, we obtain
\begin{align}
    & 2C_3\|\nabla \phi\|^2 + 2C_3\int_{\Omega} F(\phi) \, \dx \leqslant \frac{\lambda_0 a \underline{m}}{4} \| \nabla \mu\|^2+C.
    \label{time-discretization-estimate-mu-b}
    \end{align}
Next, using $(\mathbf{A5})$, Poincar\'{e}'s inequality \eqref{Poin} and Young's inequality, we see that the last term on the right-hand side of \eqref{time-discretization-estimate-u} can be estimated as follows
\begin{align}
		\big(\bm{f}_{\mathrm{b}}(\phi^k, \theta^k), \u\big)
     \leqslant C\| \u \| \leqslant \frac{\underline{\nu}}{6} \| \nabla \u \|^2 + C.
		\label{time-discretization-estimate2}
\end{align}
Concerning \eqref{time-discretization-estimate-ph}, we apply once again the convexity of $F$ to get
\begin{align}
    \int_\Omega  F^{\prime}(\phi)(\phi-\phi^{k})\,\mathrm{d}x  \geqslant \int_\Omega F(\phi)\,\mathrm{d}x  - \int_\Omega F(\phi^{k})\,\mathrm{d}x.
     \label{time-discretization-conF}
\end{align}
Finally, let us consider \eqref{time-discretization-estimate-th}. By the definition of $\Theta_{\mathrm{b}}$, the $L^\infty$-estimate for $\theta$, $\|\phi^k\|_{L^\infty(\Omega)}\leqslant 1$, H\"{o}lder's inequality, Poincar\'{e}'s inequality and Young's inequality, we can control the two terms on the right-hand side as follows
\begin{align}
-(\u \cdot \nabla \Theta_{\mathrm{b}},\vartheta)
&\leqslant \|\u\|_{L^6(\Omega)}\|\nabla \Theta_{\mathrm{b}}\|_{L^3(\Omega)}\|\theta-\Theta_{\mathrm{b}}\|
\notag \\
&\leqslant C\|\nabla \u\| \|\Theta_{\mathrm{b}}\|_{H^2(\Omega)}(\|\theta\| + \|\Theta_{\mathrm{b}}\|)
\notag \\
&\leqslant \frac{\underline{\nu}}{6} \|\nabla \u\|^2  +C,
\label{time-discretization-estimate3}
\end{align}
and
\begin{align}
- ( \kappa(\phi^k,\theta^k) \nabla \Theta_{\mathrm{b}},\nabla \vartheta)
&\leqslant \|\kappa(\phi^k,\theta^k)\|_{L^\infty(\Omega)}\|\nabla \Theta_{\mathrm{b}}\|\|\nabla \vartheta\|
\notag \\
&\leqslant \frac{\underline{\kappa}}{2} \|\nabla \vartheta\|^2+C.
\label{time-discretization-estimate4}
\end{align}

Adding \eqref{time-discretization-estimate-u}, \eqref{time-discretization-estimate-ph}, \eqref{time-discretization-estimate-th} and \eqref{time-discretization-estimate-mu-b} together, using the estimates \eqref{time-discretization-estimate1}, \eqref{time-discretization-estimate2}, \eqref{time-discretization-conF}, \eqref{time-discretization-estimate3} and \eqref{time-discretization-estimate4}, we can deduce from the assumptions $(\mathbf{A1})$--$(\mathbf{A3})$ and Korn's inequality \eqref{Korn} that
\begin{align}
		& E_{\mathrm{tot}}(\u,\phi, \vartheta)
        + h \left(\frac{\underline{\nu}}{2}   \| \nabla \u \|^2
		+ \frac{\lambda_0 a \underline{m}}{2} \| \nabla \mu \|^2
        + \frac{\underline{\kappa}}{2} \| \nabla \vartheta \|^2 \right)
        \notag \\
		&\quad +Ch\left(\frac12 \|\nabla\phi\|^2 + \int_\Omega F(\phi)\,\mathrm{d}x\right)
          \leqslant
       E_{\mathrm{tot}}(\u^k,\phi^k, \vartheta^k) + Ch,
       \label{dis-energy1}
\end{align}
where the positive constant $C$ depends on $\Omega$, coefficients of the system, $\|\theta_{\mathrm{b}}\|_{H^\frac{3}{2}(\partial\Omega)}$, $\|\theta^0\|_{L^\infty(\Omega)}$, but not on $h$ and $k$. The assumption $(\mathbf{A4})$ together with the estimate  $\|\phi\|_{L^\infty(\Omega)}\leqslant 1$ implies that the energy functional $\int_\Omega W(\phi)\,\mathrm{d}x$ is bounded from below by a constant that depends only on $\Omega$ and $c_W$. As a consequence, we can conclude the discrete energy inequality \eqref{discrete-energy-inequality} from \eqref{dis-energy1} and Poincar\'{e}'s inequality for $\u$, $\vartheta$.
\medskip

\textbf{Part B. Solvability of the discrete problem \eqref{time-discretization-u}--\eqref{time-discretization-theta}.}
In what follows, we prove the existence of a solution $(\u,\phi,\mu,\vartheta)$ to problem \eqref{time-discretization-u}--\eqref{time-discretization-theta}. For convenience, we will use the notation $\bm{w} = (\u, \phi, \mu, \vartheta)$ in the subsequent analysis.

Using the identities
\begin{align*}
& \mathrm{div}(\u\otimes \mathbf{J})= (\mathrm{div} \,\mathbf{J})\u+ (\mathbf{J}\cdot \nabla)\u,\qquad \mathrm{div} \mathbf{J} =  -\frac{\rho - \rho^k}{h}  -
	 \u \cdot \nabla \rho^{ k },\\
& \mathrm{div}\big[\theta^k (\nabla \phi \otimes \nabla \phi)\big]= (\nabla \phi \otimes \nabla \phi)\nabla \theta^k + \theta^k\left[-\big(\mu+c_W(\phi+\phi^k)\big)\nabla \phi  + \nabla \left(\frac{|\nabla \phi|^2}{2}+ F(\phi)\right)\right],
\end{align*}
we rewrite equation \eqref{time-discretization-u} as
\begin{align}
&\left( \frac{\rho\u - \rho^k\u^k}{h} , \v \right)
    + \big(\mathrm{div}(\rho^k \u \otimes \u), \v\big)
    + \left(\Big(\mathrm{div} \mathbf{J}  -\frac{\rho - \rho^k}{h}
    - \u \cdot \nabla \rho^{k}\Big)\frac{\u}{2},\v  \right)
    \notag \\
	&\qquad  + \left((\mathbf{J}\cdot \nabla)\u,\v\right)
    + \big(2 \nu(\phi^k, \theta^k) D \u , D \v\big)
    - \lambda_0 a \big(\mu \nabla \phi^k, \v\big)
    \notag \\
    &\quad =
	\lambda_0 b \big(  (\nabla \phi \otimes \nabla \phi)\nabla (\vartheta^k+ \Theta_{\mathrm{b}}), \v\big)
    - \lambda_0 b  \big((\vartheta^k+ \Theta_{\mathrm{b}})\big[\mu+c_W(\phi+\phi^k)\big] \nabla \phi, \bm{v}\big)
    \notag \\
    &\qquad -\frac{\lambda_0 b}{2} \big(\nabla(\vartheta^k+ \Theta_{\mathrm{b}}) (|\nabla \phi|^2 + 2F(\phi)),\bm{v}\big)
    + \big(\bm{f}_{\mathrm{b}}(\phi^k, \vartheta^k+ \Theta_{\mathrm{b}}), \v\big),
    \quad \forall\, \v \in \V.
\label{time-discretization-u-b}
\end{align}
Define
\begin{align*}
&X = \V \times \D(\partial \widetilde{E}) \times H_{\mathbf{n}}^2(\Omega) \times (H_0^1(\Omega)\cap H^2(\Omega)),\\
&Y = \V^{\prime} \times L^2(\Omega) \times L^2(\Omega) \times L^2(\Omega).
\end{align*}
We note that $X$ is not a Banach space due to nonlinear constraints. In view of
\eqref{time-discretization-phi}--\eqref{time-discretization-theta} and \eqref{time-discretization-u-b}, we consider two operators $\mathcal{L}_k, \mathcal{F}_k: X \to Y$ given by
	\begin{align*}
		\mathcal{L}_{k}(\bm{w}) =
		\begin{pmatrix}
			\mathcal{L}_k^{(1)}(\u) \\
			-\mathrm{div} ( m(\phi^{k}, \theta^k) \nabla \mu) + \displaystyle{\int_{\Omega} \mu\, \mathrm{d}x}\\
			\partial \widetilde{E}(\phi) \\
			-\mathrm{div}(\kappa(\phi^k,\theta^k) \nabla \vartheta)
		\end{pmatrix},
	\end{align*}
	and
	\begin{align*}
		\mathcal{F}_k(\bm{w}) =
		\begin{pmatrix}
			 \mathcal{F}^{(1)}_{k}(\u, \phi, \mu)
             \\
			 - \dfrac{\phi - \phi^{k}}{h} - \u \cdot \nabla \phi^k
             + \displaystyle{\int_{\Omega} \mu \dx}
             \\
			 \mu + c_W(\phi+\phi^k)
             \\
			 - \dfrac{\vartheta - \vartheta^{k}}{h} - \u \cdot \nabla \vartheta
             - \u \cdot \nabla \Theta_{\mathrm{b}}
             + \mathrm{div} (\kappa(\phi^k,\theta^k) \nabla \Theta_{\mathrm{b}})
		\end{pmatrix}.	
	\end{align*}
    For convenience, we denote the $i$th $(i=1,2,3,4)$ element of $\mathcal{L}_{k}$ (resp. $\mathcal{F}_{k}$) by $\mathcal{L}^{(i)}_{k}$ (resp. $\mathcal{F}^{(i)}_{k}$).
	The linear operator $\mathcal{L}_k^{(1)} : \V \to \V^{\prime}$ is defined by
	$$
	\big \langle \mathcal{L}_k^{(1)}(\u) , \v \big \rangle_{\V} = \int_{\Omega} 2 \nu(\phi^k,\theta^k) D \u : D \v \dx,
	\quad  \forall\, \v \in \V,
	$$
    and $\mathcal{F}^{(1)}_{k}(\u, \phi, \mu)$ given by
    \begin{align*}
    \mathcal{F}^{(1)}_{k}(\u, \phi, \mu) =& - \dfrac{\rho\u - \rho^k\u^k}{h} - \mathrm{div}(\rho^k \u \otimes \u)
    -\Big(\mathrm{div}\, \mathbf{J}  -\dfrac{\rho - \rho^k}{h}
    -\u \cdot \nabla \rho\Big)\dfrac{\u}{2}
    \\
    &- (\mathbf{J}\cdot \nabla)\u
    + \lambda_0a \mu \nabla \phi^k
    - \lambda_0 b \theta^k \big[\mu+c_W(\phi+\phi^k)\big] \nabla \phi
    \\
    &+ \lambda_0 b (\nabla \phi \otimes \nabla \phi)\nabla \theta^k
    - \dfrac{\lambda_0 b}{2} \nabla\theta^k (|\nabla \phi|^2 + 2F(\phi))
    + \bm{f}_{\mathrm{b}}(\phi^k, \theta^k)
    \end{align*}
	should also be understood in the sense of a weak formulation.
    After establishing the above framework, we see that solving the discrete problem \eqref{time-discretization-u}--\eqref{time-discretization-theta} is equivalent to solving the abstract equation
	$$
	\mathcal{L}_k(\bm{w}) - \mathcal{F}_k(\bm{w}) = 0.
	$$
	
\textbf{Analysis of $\mathcal{L}_k$.} Our aim is to show that the operator $\mathcal{L}_k: X\to Y$ is invertible.

Given $(\phi^k, \theta^k)$ with described regularity properties, using Korn's inequality, Poincar\'{e}'s inequality, the Poincar\'{e}--Wirtinger inequality, and the positive lower/upper bounds on the coefficients $\nu$, $m$, $\kappa$, we can apply the Lax--Milgram theorem to conclude that
\begin{align*}
\mathcal{L}_k^{(1)}:\ \V \to \V',\quad
\mathcal{L}_k^{(2)}:\ H^1(\Omega)\to (H^1(\Omega))',\quad
\mathcal{L}_k^{(4)}:\ H^1_0(\Omega)\to H^{-1}(\Omega),
\end{align*}
are all invertible (cf., e.g., \cite{JMFM}). Moreover, it is straightforward to check that the corresponding inverse operators are continuous.

Consider the elliptic problem associated with $\mathcal{L}_k^{(2)}$ such that
\begin{align}
\begin{cases}
-\mathrm{div}( m(\phi^{k},\theta^k) \nabla \mu) + \displaystyle{\int_{\Omega} \mu\,\dx} = f,\quad \text{in}\ \Omega,\\
\partial_\mathbf{n} \mu=0,\qquad\qquad \qquad\qquad \qquad \qquad \ \  \text{on}\ \partial\Omega.
\end{cases}
\label{sol-mu}
\end{align}
We can extend the bootstrapping argument in \cite{JMFM} to show that for every $f\in L^2(\Omega)$, the unique solution $\mu$ to problem \eqref{sol-mu} satisfies $\mu\in H^2_\mathbf{n}(\Omega)$. Indeed, the solution $\mu$ satisfies $\mu\in H^1(\Omega)$ and can be viewed as a weak solution to the following equation:
$$
-\Delta \mu= \frac{1}{m(\phi^{k},\theta^k)}\left( \nabla m(\phi^{k},\theta^k)\cdot \nabla \mu - \displaystyle{\int_{\Omega} \mu\,\dx} + f\right)
$$
subject to the homogeneous Neumann boundary condition for $\mu$. From assumptions $\phi^k, \theta^k\in H^2(\Omega)$, $(\mathbf{A2})$, and Sobolev embedding theorems $H^2(\Omega)\hookrightarrow L^\infty(\Omega)$, $H^1(\Omega)\hookrightarrow L^6(\Omega)$, we have
\begin{align*}
& \left\|\frac{1}{m(\phi^{k},\theta^k)} \nabla m(\phi^{k},\theta^k)\cdot \nabla \mu\right\|_{L^\frac32{(\Omega)}}
\\
& \quad \leqslant \frac{1}{\underline{m}}\left(\|\partial_1 m\|_{L^\infty(\Omega)}\|\nabla \phi^k\|_{L^6(\Omega)}+ \|\partial_2 m\|_{L^\infty(\Omega)}\|\nabla \theta^k\|_{L^6(\Omega)}\right)\|\nabla \mu\|
\\
&\quad \leqslant \frac{C}{\underline{m}}\left(\|\phi^k\|_{H^2(\Omega)}+ \|\theta^k\|_{H^2(\Omega)}\right)\|\nabla \mu\|,
\end{align*}
which together with the assumption $f\in L^2(\Omega)$ yields $\Delta \mu\in L^\frac32(\Omega)$. Here, $\partial_i m$ denotes the partial derivative of $m$ with respect to its  $i$th component ($i=1,2$). By the elliptic theory, we get $\mu\in W^{2,\frac{3}{2}}(\Omega) \hookrightarrow W^{1,3}(\Omega)$. This property combined with the following estimate
\begin{align*}
& \left\|\frac{1}{m(\phi^{k},\theta^k)} \nabla m(\phi^{k},\theta^k)\cdot \nabla \mu\right\| \\
& \quad \leqslant \frac{C}{\underline{m}}\left(\|\partial_1 m\|_{L^\infty(\Omega)}\|\nabla \phi^k\|_{L^6(\Omega)}+ \|\partial_2 m\|_{L^\infty(\Omega)}\|\nabla \theta^k\|_{L^6(\Omega)}\right)\|\nabla \mu\|_{L^3(\Omega)},
\end{align*}
implies that $\Delta \mu\in L^2(\Omega)$. Hence, we can conclude that $\mu\in H^2_\mathbf{n}(\Omega)$.
Analogously, we consider the Dirichlet problem
\begin{align}
\begin{cases}
-\mathrm{div}( \kappa(\phi^{k},\theta^k) \nabla \vartheta) = f,\quad \text{in}\ \Omega,\\
\vartheta=0,\qquad\qquad \qquad \qquad  \ \  \text{on}\ \partial\Omega.
\end{cases}
\label{sol-varth}
\end{align}
From assumptions $\phi^k, \theta^k\in H^2(\Omega)$, $(\mathbf{A3})$, we see that for every $f\in L^2(\Omega)$, the unique solution $\vartheta$ to the problem \eqref{sol-varth} satisfies $\vartheta\in H^2(\Omega)\cap H^1_0(\Omega)$. Hence, the operators
\begin{align*}
\mathcal{L}_k^{(2)}:\ H^2_\mathbf{n}(\Omega) \to L^2(\Omega),\quad
\mathcal{L}_k^{(4)}:\ H^2(\Omega)\cap H^1_0(\Omega) \to L^2(\Omega),
\end{align*}
are both invertible (cf. \cite{JMFM}), and their corresponding inverse operators are continuous.

Finally, we consider $\mathcal{L}_k^{(3)}= \partial \widetilde{E}$. Since $F$ is strictly convex, then the nonlinear operator $\mathcal{L}_k^{(3)}$ is maximal monotone and $\mathcal{L}_k^{(3)}:\D(\partial \widetilde{E}) \to L^2(\Omega)$ is invertible (see also Lemma \ref{lem:sing-phi}). Concerning the inverse operator $(\mathcal{L}_k^{(3)})^{-1}=(\partial \widetilde{E})^{-1} : L^2(\Omega) \to \D(\partial \widetilde{E})$, the estimate \eqref{es-pE-1} implies that $\sup_{f\in U}\|F'\big((\mathcal{L}_k^{(3)})^{-1}(f)\big)\|$ is bounded, provided that $U$ is a bounded set in $L^2(\Omega)$.
Moreover, it has been shown in \cite[Lemma 4.3]{JMFM} that for $s \in (0,\frac{1}{4})$, its inverse $(\mathcal{L}_k^{(3)})^{-1} : L^2(\Omega) \to H^{2-s}(\Omega)$ is continuous and compact.

In summary, we have shown that $\mathcal{L}_k: X\to Y$ is invertible with the inverse $\mathcal{L}_k^{-1}: Y\to X$.
Define the Banach spaces
	\begin{align*}
	&\widetilde{X} = \V \times H^{2-s}(\Omega) \times H_\mathbf{n}^2(\Omega) \times (H^2(\Omega)\cap H_0^1(\Omega)),\quad \text{for some}\ s\in \left(0,\frac14\right),\\
	&\widetilde{Y} = L^{ \frac{3}{2} }(\Omega) \times W^{1,\frac{3}{2}} (\Omega) \times H^1(\Omega) \times W^{1,\frac{3}{2}}(\Omega),
	\end{align*}
Then we infer that the mapping $\mathcal{L}_k^{-1}: Y\to \widetilde{X}$ is continuous.
Furthermore, due to the compact embedding $\widetilde{Y}\hookrightarrow\hookrightarrow Y$,
the restriction $\mathcal{L}_k^{-1}: \widetilde{Y}\to \widetilde{X}$ is also a compact operator.
\medskip
	
\textbf{Analysis of $\mathcal{F}_k$.} Define
$$
\widehat{X} = \V \times H^{2-s}_F(\Omega) \times H_\mathbf{n}^2(\Omega) \times (H^2(\Omega)\cap H_0^1(\Omega)) \subset \widetilde{X},
$$
where
$$
H^{2-s}_F(\Omega) = \{\phi\in H^{2-s}(\Omega)\ :\  F'(\phi) \in L^2(\Omega)\big\}.
$$
\begin{remark}\label{rem:Lk3}\rm
We note that $H^{2-s}_F(\Omega)$ is not a Banach space. Nevertheless, it has the following property. Let $\{\phi_j\}_{j\geqslant 1}$ be a sequence in $H^{2-s}_F(\Omega)$ such that $\phi_j \to \phi$ strongly in $H^{2-s}(\Omega)$ as $j\to \infty$ and $\{\|F'(\phi_j)\|\}_{j\geqslant 1}$ be uniformly bounded with respect to $j$. By the Sobolev embedding $H^{2-s}(\Omega)\hookrightarrow C(\overline{\Omega})$ for $s\in (0,\frac14)$, we have $\phi_j \to \phi$ point-wisely in $\Omega$.
Since $-1<\phi_j<1$ almost everywhere in $\Omega$, then $\phi\in L^\infty(\Omega)$ and $-1\leqslant \phi\leqslant 1$ almost everywhere in $\Omega$.
From $(\mathbf{A4})$, it follows that $F'(\phi_j)\to \widetilde{F'}(\phi)$ almost everywhere in $\Omega$,
where $\widetilde{F'}(s)=F'(s)$ if $s\in (-1,1)$ and $\widetilde{F'}(\pm 1)=\pm \infty$. By Fatou's lemma, we get $\|\widetilde{F'}(\phi)\|^2\leqslant \liminf_{j\to \infty}\|F'(\phi_j)\|^2<\infty$, which implies that $\widetilde{F'}(\phi)\in L^2(\Omega)$.
Thus, $-1<\phi<1$ almost everywhere in $\Omega$, implying that $\widetilde{F'}(\phi)=F'(\phi)$ almost everywhere in $\Omega$. Consequently, the limit function $\phi \in H^{2-s}_F(\Omega)$.
\end{remark}

In the following, we show that $\mathcal{F}_k : \widehat{X} \to \widetilde{Y}$ is continuous and maps bounded sets into bounded sets.

For this purpose, we derive estimates for every term in the expression of $\mathcal{F}_k(\bm{w})$. Using the assumptions $\phi^k, \theta^k \in H^2(\Omega)\hookrightarrow L^\infty(\Omega)$ (and thus $\rho^k\in H^2(\Omega)$), $(\mathbf{A2})$--$(\mathbf{A5})$, H\"{o}lder's inequality, the Sobolev embedding theorems and \eqref{es-wpphi}, we can deduce that
	\begin{align*}
		&\| \rho\u \|_{L^{\frac32 }(\Omega)} \leqslant \|\rho\| \|\u\|_{L^6(\Omega)}\leqslant  C(1+\|\phi\|) \| \u \|_{\V},
        \\
        & \| \rho^k\u^k \|_{L^{\frac32 }(\Omega)} \leqslant C (1+\|\phi^k\|)\| \u^k \|_{\V},
        \qquad \| \rho^k\u \|_{L^{\frac32 }(\Omega)} \leqslant C (1+\|\phi^k\|)\| \u \|_{\V},
        \\
		&\| \mathrm{div}(\rho^k(\u \otimes \u) )\|_{L^{\frac32 }(\Omega)}
           \leqslant \|\rho^k\|_{L^\infty(\Omega)}\|\nabla \u\|\|\u\|_{L^6(\Omega)}
           + \|\nabla \rho^k\|_{L^6(\Omega)} \|\u\|_{L^4(\Omega)}^2
        \\
        &\qquad\qquad \qquad \qquad \qquad  \leqslant C(1+\|\phi^k\|_{H^2(\Omega)}) \| \u \|_{\V}^2,
        \\
        & \|(\mathrm{div}\,\mathbf{J})\u\|_{L^{\frac32 }(\Omega)}
        \leqslant C(\|\partial_1 m\|_{L^\infty(\Omega)}\|\nabla \phi^k\|_{L^6(\Omega)}
        + \|\partial_2 m\|_{L^\infty(\Omega)}\|\nabla \theta^k\|_{L^6(\Omega)}) \|\nabla \mu\|_{L^4(\Omega)}\|\u\|_{L^4(\Omega)}
        \\
        &\qquad \qquad\qquad \qquad  + C \|m\|_{L^\infty(\Omega)} \|\Delta\mu\|\|\u\|_{L^6(\Omega)}
        \\
        & \qquad \qquad\qquad \qquad \leqslant C(1+\|\phi^k\|_{H^2(\Omega)} + \|\theta^k\|_{H^2(\Omega)})\|\mu\|_{H^2(\Omega)}\| \u \|_{\V},
        \\
        & \|(\mathbf{J}\cdot\nabla) \u\|_{L^{\frac32 }(\Omega)}
        \leqslant C\|m\|_{L^\infty(\Omega)} \|\nabla \mu\|_{L^6(\Omega)} \| \nabla \u \|
        \leqslant C\|\mu\|_{H^2(\Omega)}\| \u \|_{\V},
        \\
        & \|(\u\cdot\nabla \rho)\u\|_{L^{\frac32 }(\Omega)}
        \leqslant \|\nabla \rho\|_{L^4(\Omega)} \|\u\|_{L^6(\Omega)}\|\u\|_{L^4(\Omega)} \leqslant
        C \big(1+\|\phi\|_{H^\frac{7}{4}(\Omega)}\big)  \| \u \|_{\V}^2,
        \\
		&\| \mu \nabla \phi^k \|_{L^{\frac32 }(\Omega)}
		\leqslant C  \| \mu \|_{L^6(\Omega)} \| \nabla \phi^k \|
        \leqslant C \| \mu \|_{H^1(\Omega)}\|\phi^k\|_{H^1(\Omega)},
        \\
        &\| \theta^k  \big[\mu+c_W(\phi+\phi^k)\big] \nabla \phi \|_{L^{\frac32 }(\Omega)}
        \\
		&\qquad \leqslant C \|\theta^k\|_{L^\infty(\Omega)}  (\| \mu \|_{L^6(\Omega)} + \| \phi \|_{L^6(\Omega)} +\| \phi^k \|_{L^6(\Omega)}) \| \nabla \phi \|
        \\
        &\qquad \leqslant C \|\theta^k\|_{H^2(\Omega)} ( \| \mu \|_{H^1(\Omega)} + \|\phi\|_{H^1(\Omega)}+ \|\phi^k\|_{H^1(\Omega)})\|\phi\|_{H^1(\Omega)},
        \\
		& \| (\nabla \phi \otimes \nabla \phi)\nabla \theta^k\|_{L^{\frac32 }(\Omega)}
          + \| \nabla\theta^k (|\nabla \phi|^2 + 2F(\phi))\|_{L^{\frac32 }(\Omega)}
        \\
        &\qquad \leqslant C \| \nabla \theta^k \|_{L^6(\Omega)} \| \nabla \phi \|_{L^4(\Omega)}^2
        + C \| \nabla \theta^k \|_{L^6(\Omega)} \|F(\phi)\|
		\\
        &\qquad  \leqslant C \|\theta^k \|_{H^2(\Omega)} \big(1+\| \phi \|_{H^{\frac74}(\Omega)}^2\big),
        \\
		&\|\bm{f}_{\mathrm{b}}(\phi^k, \theta^k)\|_{L^{\frac32 }(\Omega)} \leqslant C(1+ \|\phi^k\|_{L^6(\Omega)})(1+ \|\theta^k\|)
        \leqslant C(1+ \|\phi^k\|_{H^1(\Omega)})(1+ \|\theta^k\|),
        \\
		&\| \u \cdot \nabla \phi^k \|_{W^{1,\frac32} (\Omega)}
		\leqslant \| \nabla \u \| \|\nabla \phi^k\|_{L^6(\Omega)}
        + C \| \u \|_{L^6(\Omega)}\|\phi^k\|_{H^2(\Omega)}
        \\
		&\qquad \qquad \qquad\quad\ \, \leqslant C \| \u \|_{\V} \| \phi^k \|_{H^2(\Omega)},
        \\
		&\left \| \int_{\Omega} \mu \,\dx \right\|_{W^{1,\frac32} (\Omega)}
        \leqslant C\left\| \int_{\Omega} \mu \,\dx \right \|
        \leqslant C \| \mu \|,
        \\
        & \|\mu + c_W(\phi+\phi^k)\|_{H^1(\Omega)}\leqslant \|\mu\|_{H^1(\Omega)}+c_W(\|\phi\|_{H^1(\Omega)} +\|\phi^k\|_{H^1(\Omega)}),
        \\
		&\| \u \cdot \nabla \theta \|_{W^{1,\frac32}(\Omega)}
		\leqslant  \|\nabla \u \| \| \nabla \theta \|_{L^6(\Omega)}
        + C\| \u \|_{L^6(\Omega)} \| \theta\|_{H^2(\Omega)}
        \\
        &\qquad \qquad \qquad\quad \leqslant C \| \u \|_{\V} (\| \vartheta \|_{H^2(\Omega)} + \|\Theta_{\mathrm{b}}\|_{H^2(\Omega)}),
	\end{align*}
and finally,
\begin{align}
& \| \mathrm{div} ( \kappa(\phi^k, \theta^k) \nabla \Theta_{\mathrm{b}} ) \|_{W^{1,\frac{3}{2}}(\Omega)}
 \leqslant \| \kappa(\phi^k, \theta^k) \nabla \Theta_{\mathrm{b}} \|_{W^{2,\frac{3}{2}}(\Omega)}
        \notag \\
&\quad \leqslant \| \kappa \|_{L^{\infty}(\Omega)} \| \Theta_{\mathrm{b}} \|_{W^{3,\frac{3}{2}}(\Omega)}
        + \big( \| \partial_1^2 \kappa \|_{L^{\infty}(\Omega)} \| \nabla \phi^k \|_{L^{3}(\Omega)}^2
        + \| \partial_1 \kappa \|_{L^{\infty}(\Omega)} \| \phi^k \|_{W^{2,\frac{3}{2}}(\Omega)}
        \notag \\
&\qquad + \| \partial_2^2 \kappa \|_{L^{\infty}(\Omega)} \| \nabla \theta^k \|_{L^{3}(\Omega)}^2
        + \| \partial_2 \kappa \|_{L^{\infty}(\Omega)} \| \theta^k \|_{W^{2,\frac{3}{2}}(\Omega)}
        \notag \\
&\qquad + \| \partial_1 \partial_2 \kappa \|_{L^{\infty}(\Omega)} \| \nabla \phi^k \|_{L^3(\Omega)} \| \nabla \theta^k \|_{L^3(\Omega)}
        \big) \| \nabla \Theta_{\mathrm{b}} \|_{L^{\infty}(\Omega)}
        \notag \\
&\qquad + (\|\partial_1\kappa\|_{L^\infty(\Omega)}\|\nabla \phi^k\|+ \|\partial_2\kappa\|_{L^\infty(\Omega)}\|\nabla \theta^k\|) \| \Theta_{\mathrm{b}} \|_{W^{2,6}(\Omega)}
        \notag \\
&\quad \leqslant C(1+\|\phi^k\|_{H^2(\Omega)}+ \|\theta^k\|_{H^2(\Omega)})\|\Theta_{\mathrm{b}}\|_{H^{3}(\Omega)}.
\label{need-H5/2}
\end{align}
Here, $\partial_i \kappa$ denotes the partial derivative of $\kappa$ with respect to its $i$th component ($i=1,2$).
Therefore, $\mathcal{F}_k: \widehat{X}\to \widetilde{Y}$ maps bounded sets into bounded sets. We note that in the above reasoning, $\|F(\phi)\|$ is controlled using the assumption $(\mathbf{A4})$ and the estimate $\|\phi\|_{L^\infty(\Omega)}\leqslant 1$ (since $F'(\phi)\in L^2(\Omega)$).

To verify the continuity of $\mathcal{F}_k:\widehat{X}\to \widetilde{Y}$, below we only treat the most difficult term $F(\phi) \nabla \theta^k $, since the continuous dependence estimate for the other terms in $\mathcal{F}_k(\bm{w})$ are straightforward in view of the above argument for boundedness.

Let $\phi_1, \phi_2\in H^{2-s}_F(\Omega)$. Using H\"{o}lder's inequality and the Sobolev embedding theorem, we obtain
   \begin{align*}
   \|(F(\phi_1)-F(\phi_2))\nabla \theta^k \|_{L^\frac43(\Omega)}
    &  \leqslant \|\nabla \theta^k \|_{L^6(\Omega)}\|F(\phi_1)-F(\phi_2)\|_{L^\frac{12}{7}(\Omega)}
    \\
    &  \leqslant C\|\theta^k \|_{H^2(\Omega)}\|F(\phi_1)-F(\phi_2)\|_{L^\frac{12}{7}(\Omega)}.
   \end{align*}
   Thus, it remains to control $\|F(\phi_1)-F(\phi_2)\|_{L^\frac{12}{7}(\Omega)}$.
   From the assumption $F'(\phi_1), F'(\phi_2)\in L^2(\Omega)$ and the singularity of $F'$ at $\pm 1$, we infer that $\phi_1, \phi_2\in (-1,1)$ almost everywhere in $\Omega$. Therefore, for almost all $x\in \Omega$ and any $r\in [0,1]$, it holds
   $$
   -1<\min\{\phi_1(x),\phi_2(x)\}\leqslant r\phi_1(x)+(1-r)\phi_2(x)\leqslant \max\{\phi_1(x),\phi_2(x)\}<1.
   $$
   On the other hand, from $(\mathbf{A4})$ it follows that $F'\in C^1(-1,1)$ is strictly increasing.
   Thus, we find
   $$
   |F'(r\phi_1+(1-r)\phi_2)|\leqslant |F'(\phi_1)|+|F'(\phi_2)|, \quad \text{a.e. in}\ \Omega,\ \ \forall\, r\in [0,1].
   $$
   As a consequence, $F'(r\phi_1+(1-r)\phi_2)\in L^2(\Omega)$ for every $r\in[0,1]$.
   Using Taylor's expansion
   \begin{align*}
   F(\phi_1)-F(\phi_2)=\int_0^1 F'(r\phi_1+(1-r)\phi_2)(\phi_1-\phi_2)\,\mathrm{d}r,
   \end{align*}
   we infer that
   \begin{align*}
   \|F(\phi_1)-F(\phi_2)\|_{L^\frac{12}{7}(\Omega)}& \leqslant \int_0^1 \|F'(r\phi_1+(1-r)\phi_2)\|\|\phi_1-\phi_2\|_{L^{12}(\Omega)}\,\mathrm{d}r\\
   &\leqslant C(\|F'(\phi_1)\|+\|F'(\phi_2)\|)\|\phi_1-\phi_2\|_{H^\frac{7}{4}(\Omega)},
   \end{align*}
   where the positive constant $C$ depends only on $\Omega$. This yields the continuity of $F(\phi) \nabla \theta^k $ from $H^{2-s}_F(\Omega)$ to $L^\frac43(\Omega)$ for any $s\in (0,\frac14)$.
   Besides, thanks to the fact $\theta^k \in H^2(\Omega)$ and $\mathbf{(A4)}$, it is straightforward to check that
   $ \|(F(\phi_1)-F(\phi_2))\nabla \theta^k \|_{L^6(\Omega)} \leqslant C \| \theta^k \|_{H^2(\Omega)}$, for some universal positive constant $C$. By interpolation, the continuity actually holds from $H^{2-s}_F(\Omega)$ to $L^\frac32(\Omega)$. Hence, we can conclude that the mapping $\mathcal{F}_k : \widehat{X} \to \widetilde{Y}$ is continuous.
   \medskip
	
	\textbf{Existence via the Leray--Schauder fixed point theorem.}
	Like in \cite{JMFM},  in order to find a solution $\bm{w}\in X$ satisfying the equation $\mathcal{L}_k(\bm{w}) - \mathcal{F}_k(\bm{w}) = 0$,
    we set $\bm{g}= \mathcal{L}_k(\bm{w})$ and consider the following equation
	$$\bm{g} - \mathcal{F}_k \circ \mathcal{L}_k^{-1} (\bm{g}) = 0.$$
Because $\mathcal{L}_k^{-1} : Y \to X$ is well defined and $\widetilde{Y}\subset Y$, so is the restriction $\mathcal{L}_k^{-1} : \widetilde{Y} \to X \subset \widehat{X}\subset \widetilde{X}$. Since $\mathcal{L}_k^{-1} : \widetilde{Y} \to \widetilde{X}$ is compact, $\mathcal{F}_k : \widehat{X}\subset \widetilde{X} \to \widetilde{Y}$ is continuous and maps bounded sets into bounded sets, recalling the above mentioned properties of $(\mathcal{L}_k^{(3)})^{-1}$ and Remark \ref{rem:Lk3}, we can check that the operator
$$\mathcal{K}_k := \mathcal{F}_k \circ \mathcal{L}_k^{-1} : \widetilde{Y} \to \widetilde{Y}$$
is well defined and also compact.

The problem then reduces to finding a fixed point for the operator $\mathcal{K}_k$ in $\widetilde{Y}$, which can be done by applying the Leray--Schauder fixed point theorem (see, for instance, \cite[Theorem 6.A]{Ze92}). To this aim, we only need to verify the following assertion: there exists some $R>0$ such that
\begin{align}
\textit{if }\bm{g} \in \widetilde{Y}\textit{ and }\ 0 \leqslant \Lambda  \leqslant 1\textit{ fulfills }\ \bm{g} = \Lambda \mathcal{K}_k(\bm{g}),\textit{ then }\| \bm{g} \|_{\widetilde{Y}} \leqslant R.
\label{claim}
\end{align}

Let $\bm{g} \in \widetilde{Y}$ and $ 0 \leqslant \Lambda  \leqslant 1$ satisfy $\bm{g} = \Lambda \mathcal{K}_k(\bm{g})$. For $(\u,\phi,\mu, \vartheta)= \bm{w}= \mathcal{L}_k^{-1}(\bm{g})\in X$, we have
	$$ \mathcal{L}_k(\bm{w}) = \Lambda \mathcal{F}_k(\bm{w}). $$
	Rewriting the above equation as a system of partial differential equations, we obtain
\begin{align}
&\Lambda\left( \frac{\rho\u - \rho^k\u^k}{h} , \v \right) +
	\Lambda\big(\mathrm{div}(\rho^k \u \otimes \u), \v\big) + \Lambda(\mathrm{div}(\u\otimes \mathbf{J}), \v)
    \notag \\
	&\qquad  +
	\big(2 \nu(\phi^k, \vartheta^k+ \Theta_{\mathrm{b}}) D \u , D \v\big) - \Lambda \lambda_0 a (\mu \nabla \phi^k, \v)
    \notag \\
    &\quad = -\Lambda
	\lambda_0 b \big((\vartheta^k+ \Theta_{\mathrm{b}}) (\nabla \phi \otimes \nabla \phi), \nabla \v\big) +
	\Lambda\big(\bm{f}_{\mathrm{b}}(\phi^k, \vartheta^k+ \Theta_{\mathrm{b}}), \v\big),
\quad &&\forall\, \v \in \V,
\label{Leray-Schauder-u}
\end{align}
with
\begin{align}
\mathbf{J}= -\frac{\rho_2-\rho_1}{2}m(\phi^k,\vartheta^k+ \Theta_{\mathrm{b}})\nabla \mu, \qquad  \text{a.e. in } \Omega, \label{Leray-Schauder-J}
\end{align}
and
\begin{align}
	&  \Lambda\frac{\phi - \phi^k}{h}  +
	 \Lambda\u \cdot \nabla \phi^k  - \Lambda \int_\Omega \mu\,\dx = \mathrm{div}(m(\phi^k,\vartheta^k+ \Theta_{\mathrm{b}}) \nabla \mu) -  \int_\Omega \mu\,\dx, \quad &&\text{a.e. in } \Omega,
\label{Leray-Schauder-phi}\\
	& \Lambda\mu  + \Lambda c_W (\phi+\phi^k) = - \Delta \phi + F^{\prime}(\phi), \quad &&\text{a.e. in } \Omega,
\label{Leray-Schauder-mu}  \\
	&  \Lambda\frac{\vartheta - \vartheta^k}{h}  +
	  \Lambda\u \cdot \nabla \vartheta - \mathrm{div}  ( \kappa(\phi^k,\vartheta^k+ \Theta_{\mathrm{b}}) \nabla \vartheta)
\notag \\
&\quad =  - \Lambda\u \cdot \nabla \Theta_{\mathrm{b}} +  \Lambda\mathrm{div} ( \kappa(\phi^k,\vartheta^k+ \Theta_{\mathrm{b}}) \nabla \Theta_{\mathrm{b}}), \quad &&\text{a.e. in } \Omega.
\label{Leray-Schauder-theta}
\end{align}

Taking $\v = \u$ in \eqref{Leray-Schauder-u}, testing \eqref{Leray-Schauder-phi} by $\lambda_0 a \mu$, \eqref{Leray-Schauder-mu} by $\dfrac{\lambda_0 a}{h}(\phi - \phi^k)$, and \eqref{Leray-Schauder-theta} by $\vartheta$, summing up the resultants, we get the energy identity
	\begin{align}
		& \Lambda \int_{\Omega} \frac{\rho|\u|^2}{2h} - \frac{\rho^k|\u^k|^2}{2h}
        + \Lambda \frac{\rho^k|\u - \u^k|^2}{2h} \dx
		+ \int_{\Omega} 2 \nu(\phi^k,\theta^k) | D \u |^2 \dx
        \notag \\
        & \qquad  +  \lambda_0 a \int_{\Omega} \frac{| \nabla \phi |^2}{2h} - \frac{| \nabla \phi^k |^2}{2h} + \frac{| \nabla(\phi-\phi^k) |^2}{2h} \dx
		+ \lambda_0 a \int_{\Omega} m(\phi^k,\theta^k) | \nabla \mu |^2 \dx
        \notag \\
        & \qquad + \Lambda \int_{\Omega} \frac{|\vartheta|^2}{2h} - \frac{|\vartheta^k|^2}{2h} + \frac{|\vartheta-\vartheta^k|^2}{2h} \dx
		+ \int_{\Omega} \kappa(\phi^k,\theta^k) | \nabla \vartheta |^2 \dx
        \notag \\
        &\qquad + (1-\Lambda) \lambda_0 a \left( \int_{\Omega} \mu \dx \right)^2
         + \frac{\lambda_0 a}{h} \int_{\Omega} F^{\prime}(\phi) (\phi - \phi^k) \dx
        - \Lambda \frac{\lambda_0 ac_W}{h} \int_{\Omega} (|\phi|^2 - |\phi^k|^2)\,\mathrm{d}x
       \notag \\
		&\quad =
         -\Lambda \lambda_0 b \big(\theta^k (\nabla \phi \otimes \nabla \phi), \nabla \u\big)
         +\Lambda \big(\bm{f}_{\mathrm{b}}(\phi^k, \theta^k), \u\big)-\Lambda (\u \cdot \nabla \Theta_{\mathrm{b}},\vartheta)
         \notag \\
         &\qquad - \Lambda ( \kappa(\phi^k,\theta^k) \nabla \Theta_{\mathrm{b}},\nabla \vartheta).
         \label{dis-energyLa}
	\end{align}

Since $\bm{w}\in X$, $\|\phi\|_{L^\infty(\Omega)}\leqslant 1$ holds for any $\Lambda\in [0,1]$. Concerning the $L^\infty$-estimate for $\theta$, for any given $\Lambda\in (0,1]$, we can apply Lemma \ref{Stampacchia-truncation-method-remark} to conclude \eqref{maximumprinciple-discrete}. When $\Lambda=0$, \eqref{Leray-Schauder-theta} becomes an elliptic equation. In this case, we infer from \cite[Lemma 3.1]{LB96} that
$$
\min_{\partial\Omega}\theta_{\mathrm{b}}\leqslant \theta(x) \leqslant \max_{\partial\Omega}\theta_{\mathrm{b}},\quad\text{a.e. in}\ \overline{\Omega}.
$$
Hence, we obtain $L^\infty$-estimates for both $\phi$, $\theta$ (as well as $\vartheta$) that are independent of the parameter $\Lambda\in [0,1]$.

Based on the above observation, using the fact $\Lambda\in [0,1]$ and accordingly modifying the arguments in Part A, we can deduce from \eqref{dis-energyLa} that
\begin{align}
		& \int_\Omega \frac{\Lambda}{2}\rho|\u|^2\,\mathrm{d}x +
		\lambda_0 a \left( \frac{1}{2} \| \nabla \phi \|^2 + \int_{\Omega} W(\phi)\,\mathrm{d}x \right)
        + \frac{\Lambda}{2} \| \vartheta \|^2
        \notag\\
        &\qquad
        + h \left(\frac{\underline{\nu}}{2}   \| \nabla \u \|^2
		+ \frac{\lambda_0 a \underline{m}}{2} \| \nabla \mu \|^2
        + \frac{\underline{\kappa}}{2} \| \nabla \vartheta \|^2 \right)
        + C'h\left(\frac12 \|\nabla\phi\|^2 + \int_\Omega F(\phi)\,\mathrm{d}x\right)
        \notag \\
        &\qquad + (1-\Lambda) \lambda_0 a \left( \int_{\Omega} \mu \dx \right)^2
        \notag \\
		&\quad \leqslant
       \int_\Omega \frac{\Lambda}{2}\rho^k|\u^k|^2\,\mathrm{d}x +
		\lambda_0 a \left( \frac{1}{2} \| \nabla \phi^k \|^2 + \int_{\Omega} W(\phi^k)\,\mathrm{d}x \right)
        + \frac{\Lambda}{2} \| \vartheta^k \|^2 + C''h,
       \label{dis-energy1-La}
\end{align}
where the positive constants $C'$, $C''$ are independent of $\Lambda \in [0,1]$.
Since $h\in (0,1]$, it follows that
	\begin{align*}
		\sqrt{1-\Lambda} \left| \int_{\Omega} \mu \dx \right|
		+ \| \u \|_{\V} + \| \nabla \mu \| + \| \phi \|_{H^1(\Omega)} + \| \vartheta \|_{H_0^1(\Omega)} \leqslant C_{k,h},
	\end{align*}
where the positive constant $C_{k,h}$ is independent of $\Lambda \in [0,1]$.
As in \cite{JMFM}, the estimate for $| \int_{\Omega} \mu \dx |$ should be distinguished in two cases. For $\Lambda \in [0,\frac12)$, it easily follows from the above estimate that $| \int_{\Omega} \mu \dx |\leqslant \sqrt{2}C_{k,h}$. For $\Lambda \in [\frac12,1]$, we can apply \eqref{dpm-es2} to conclude $| \int_{\Omega} \mu \dx |\leqslant C(\|\nabla \mu\|+1)\leqslant C$ for some positive constant $C$ depending on $C_{k,h}$, $\overline{\phi}$, but not on $\Lambda$.
Finally, using the $H^2$-estimate for elliptic problems \eqref{sol-mu}, \eqref{sol-varth}, we can deduce that
	\begin{align*}
		\| \u \|_{\V} + \| \mu \|_{H^2(\Omega)} + \| \phi \|_{H^1(\Omega)} + \| \theta \|_{H^2(\Omega)} \leqslant C_{k,h}.
	\end{align*}
In view of equation \eqref{Leray-Schauder-mu}, we infer from the fact $\Lambda \in [0,1]$ and a similar argument as in the proof for \eqref{dpm-es3} that $\|\phi\|_{H^2(\Omega)}+\|F'(\phi)\|\leqslant C_{k,h}$. As a consequence, it holds
	\begin{align}
		\| \bm{w} \|_{\widetilde{X}} + \|F'(\phi)\|\leqslant C_{k,h},\label{es-w-1}
	\end{align}
where the positive constant $C_{k,h}$ is independent of $\Lambda \in [0,1]$ and $\bm{g}$.
Recalling that $\bm{g}$ satisfies $\bm{g} = \Lambda \mathcal{K}_k(\bm{g}) =  \Lambda \mathcal{F}_k(\bm{w})$,
we infer from \eqref{es-w-1} and the above verified properties of $\mathcal{F}_k: \widehat{X}\to \widetilde{Y}$ that
	\begin{align*}
		\|\bm{g}\|_{\widetilde{Y}} = \|\Lambda \mathcal{F}_k(\bm{w})\|_{\widetilde{Y}} \leqslant C_k (\| \bm{w} \|_{\widetilde{X}}) \leqslant \widetilde{C}_{k,h},
	\end{align*}
where the positive constant $\widetilde{C}_{k,h}$ is independent of $\Lambda \in [0,1]$ and $\bm{g}$.
Choosing $R = \widetilde{C}_{k,h}$, we thus successfully verify the assertion \eqref{claim}.
This enables us to apply the Leray--Schauder fixed point theorem and establish the existence of a weak solution of the time discrete problem \eqref{time-discretization-u}--\eqref{time-discretization-theta}.

The proof of Proposition \ref{lemma-time-discretization} is complete.
\end{proof}

\subsection{Existence of weak solutions to the continuous problem}
\label{dis-existence}
The proof of Theorem \ref{weaksolution-3d} consists of several steps.
\medskip

\textbf{Step 1. Construction of the initial data for induction.}
Let $N$ be a given positive integer and $h=\frac{1}{N}$. We first construct the initial data $(\u^0,\phi^0,\vartheta^0)$ for the induction associated with the discrete problem \eqref{time-discretization-u}--\eqref{time-discretization-theta}.

Given $\phi_0 \in H^{1}(\Omega)$ satisfying $\|\phi_0\|_{L^{\infty}(\Omega)} \leqslant 1$, $|\overline{\phi_0}|<1$, as in \cite{JMFM}, we take $$\phi_0^N=\varphi\left(\frac{1}{N}\right),$$
where $\varphi$ is the (unique) solution of the following linear parabolic problem
\begin{equation}
\begin{cases}
\partial_t \varphi = \Delta \varphi, \qquad \ \,\text{in}\ \Omega\times (0,\infty),\\
\partial_\mathbf{n}\varphi=0,\qquad\ \ \ \  \text{on}\ \partial\Omega\times (0,\infty),\\
\varphi|_{t=0}=\phi_0,\qquad \ \text{in}\ \Omega.
\end{cases}
\notag
\end{equation}
By the classical theory of linear parabolic equations, we easily find $\varphi\in C([0,\infty);H^1(\Omega))\cap C((0,\infty);H^2_\mathbf{n}(\Omega))$ such that $\|\varphi(t)\|_{L^\infty(\Omega)}\leqslant 1$, and $\overline{\varphi(t)}=\overline{\phi_0}\in (-1,1)$ for all $t\geqslant 0$. As a consequence, we have
$$\phi_0^N\to \phi_0\quad \text{in}\ H^1(\Omega)\ \ \text{as}\ N\to \infty,$$
and due to the convexity of $F$,
$$F(\phi_0^N)\to F(\phi_0) \quad \text{in}\ \ L^1(\Omega).$$

Given $\theta_0\in L^\infty(\Omega)$, we choose
$$\theta_0^N=\varpi^N, \qquad \vartheta_0^N=\theta_0^N- \Theta_{\mathrm{b}}, $$
where $\varpi^N$ is the (unique) solution to the following linear elliptic problem
\begin{equation}
\begin{cases}
-\dfrac{1}{N} \Delta \varpi^N +\varpi^N = \theta_0,\quad \text{in}\ \Omega,\\
\varpi^N=\theta_\mathrm{b},\qquad \qquad \qquad \ \ \text{on}\ \partial \Omega,
\end{cases}
\notag
\end{equation}
and the function $\Theta_\mathrm{b}$ is given by \eqref{Theta_b}.
Applying the classical theory for linear elliptic equations, we have $\varpi^N-\Theta_\mathrm{b}\in H^2(\Omega)\cap H^1_0(\Omega)$. In addition, an argument similar to that for Lemma \ref{Stampacchia-truncation-method-remark} yields
$$
\min\Big\{\operatorname*{ess\,inf}_{\Omega}\theta_0,\,\min_{\partial\Omega}\theta_{\mathrm{b}}\Big\} \leqslant \varpi^N(x) \leqslant \max\Big\{\operatorname*{ess\,sup}_{\Omega}\theta_0,\,\max_{\partial\Omega}\theta_{\mathrm{b}}\Big\},\quad \text{a.e. in}\ \overline{\Omega},
$$
which is consistent with \eqref{maximumprinciple-discrete}.
Testing the equation for $\varpi^N$ by $\varpi^N-\Theta_\mathrm{b}$, using the Cauchy--Schwarz inequality and Poincar\'{e}'s inequality, we obtain
\begin{align*}
\frac{1}{N}\|\nabla (\varpi^N-\Theta_\mathrm{b})\|^2+ \| \varpi^N-\Theta_\mathrm{b}\|^2
\leqslant
\|\theta_0-\Theta_\mathrm{b}\|\|\varpi^N-\Theta_\mathrm{b}\|
\leqslant C\|\theta_0-\Theta_\mathrm{b}\|\|\nabla (\varpi^N-\Theta_\mathrm{b})\|,
\end{align*}
which implies
$$
\| \varpi^N-\Theta_\mathrm{b}\|\leqslant \|\theta_0-\Theta_\mathrm{b}\|,\qquad \frac{1}{N}\|\nabla (\varpi^N-\Theta_\mathrm{b})\| \leqslant C\|\theta_0-\Theta_\mathrm{b}\|.
$$
Next, testing the equation for $\varpi^N$ by $(-\Delta_D)^{-1}(\varpi^N-\theta_0)\in H^2(\Omega)\cap H^1_0(\Omega)$ (here $(-\Delta_D)^{-1}$ denotes the inverse Laplacian subject to the homogeneous Dirichlet boundary condition), we infer from the fact $\Delta \Theta_{\mathrm{b}} = 0$ in $\Omega$,
Cauchy--Schwarz inequality and the above estimates that
\begin{align*}
\|\varpi^N-\theta_0\|_{H^{-1}(\Omega)}^2
& = - \frac{1}{N} (\varpi^N-\Theta_\mathrm{b},\varpi^N-\theta_0)
\\
&\leqslant \frac{1}{N} \|\varpi^N-\Theta_\mathrm{b}\| (\|\varpi^N-\Theta_\mathrm{b}\|+ \|\Theta_\mathrm{b}-\theta_0\|)\\
&\leqslant \frac{2}{N}\|\theta_0-\Theta_\mathrm{b}\|^2.
\end{align*}
As a consequence, it holds
$$
\varpi^N\to \theta_0\quad \text{in}\ H^{-1}(\Omega)\ \ \text{as}\ \ N\to \infty.
$$
By the uniform $L^2$-bound for $\varpi^N$ and the uniqueness of the limit, we also have
$$
\varpi^N\rightharpoonup \theta_0\quad \text{in}\ L^2(\Omega)\ \ \text{as}\ \ N\to \infty.
$$
Hence, we observe that
\begin{align*}
\|\varpi^N-\theta_0\|^2
&= (\varpi^N-\Theta_\mathrm{b}, \varpi^N-\theta_0)- (\theta_0-\Theta_\mathrm{b}, \varpi^N-\theta_0)
\\
& = \frac{1}{N}(\varpi^N-\Theta_\mathrm{b}, \Delta (\varpi^N-\Theta_\mathrm{b}))- (\theta_0-\Theta_\mathrm{b}, \varpi^N-\theta_0)
\\
& = \underbrace{-\frac{1}{N}\|\nabla (\varpi^N-\Theta_\mathrm{b})\|^2}_{\leqslant 0}
-  (\theta_0-\Theta_\mathrm{b}, \varpi^N-\theta_0).
\end{align*}
This together with the weak convergence of $\varpi^N$ in $L^2(\Omega)$ yields
\begin{align}
0\leqslant \limsup_{N\to \infty} \|\varpi^N-\theta_0\|^2
\leqslant  \limsup_{N\to \infty} \big[- (\theta_0-\Theta_\mathrm{b}, \varpi^N-\theta_0)\big]
=0,
\notag
\end{align}
that is, the strong convergence of $\varpi^N$ in $L^2(\Omega)$. Hence, we have
$$
\theta^N_0\to \theta_0 \quad \text{and}\quad \vartheta^N_0\to \theta_0-\Theta_\mathrm{b}\quad \text{in}\ L^2(\Omega)\ \ \text{as}\ \ N\to \infty.
$$
Therefore, for the discrete problem \eqref{time-discretization-u}--\eqref{time-discretization-theta},
we set the initial data
$$(\u^0,\phi^0,\vartheta^0)=(\u_0, \phi^N_0,\vartheta^N_0).$$

\textbf{Step 2. Construction of approximate solutions and uniform estimates.} Like in \cite{JMFM}, we define the piecewise constant interpolant $f^{N}(t)$ on $[-h,\infty )$ through $f^{N}(t)=f^{k}$ for $t\in [(k-1)h,kh)$, where $k\in \mathbb{N}$ and $f\in
\{\u,\phi,\mu,\vartheta\}$. We also define $\rho^{N}:=\rho( \phi^{N})$.
Due to the implicit formulation for $\mu$, we simply set $\mu^N(t)=0$ for $t\in [-h,0)$.
Moreover, we introduce the time shifts $f_{h}:=f(t-h)$ and the notation for time differences as well as difference quotients:
\begin{align*}
\left( \Delta _{h}^{+}f\right) (t):=f(t+h)-f(t),\qquad &
\partial_{t,h}^{+}f(t):=\frac{1}{h}\left( \Delta _{h}^{+}f\right)(t), \\
\left( \Delta _{h}^{-}f\right) (t):=f(t)-f(t-h),\qquad &
\partial_{t,h}^{-}f(t):=\frac{1}{h}\left( \Delta _{h}^{-}f\right)(t).
\end{align*}

From \eqref{time-discretization-u}--\eqref{time-discretization-theta}, we can derive the corresponding time-continuous equations.
For an arbitrary vector $\bm{v}\in C_{0}^{\infty }\big( 0,\infty;\mathbf{C}_{0,\sigma}^{\infty}(\Omega)\big)$, we choose $\widetilde{\bm{v}}:=\int_{kh}^{(k+1)h}\bm{v}\,\mathrm{d}t$ as a test function in the weak formulation \eqref{time-discretization-u} and sum over $k\in \mathbb{Z}^+$ to get
\begin{align}
&\int_0^\infty\!\!\int_\Omega \left[- (\rho^N\u^N) \cdot \partial_{t,h}^{+} \v
- (\rho_h^N \u^N \otimes \u^N):\nabla \v
- (\u^N\otimes \mathbf{J}^N): \nabla \v \right]\,\mathrm{d}x\mathrm{d}t
    \notag \\
	&\qquad  +
	 \int_0^\infty\!\!\int_\Omega 2\nu(\phi_h^N, \vartheta_h^N+ \Theta_{\mathrm{b}}) D \u^N : D \v\,\mathrm{d}x\mathrm{d}t
-  \int_0^\infty\!\!\int_\Omega \lambda_0 a \mu^N \nabla \phi_h^N\cdot \v\,\mathrm{d}x\mathrm{d}t
    \notag \\
    &\quad = - \int_0^\infty\!\!\int_\Omega \lambda_0 b  (\vartheta_h^N+ \Theta_{\mathrm{b}}) (\nabla \phi^N \otimes \nabla \phi^N): \nabla \v  \,\mathrm{d}x\mathrm{d}t
    \notag\\
    &\qquad
    + \int_0^\infty\!\!\int_\Omega \bm{f}_{\mathrm{b}}(\phi_h^N, \vartheta_h^N+ \Theta_{\mathrm{b}})\cdot \v\,\mathrm{d}x\mathrm{d}t,
    \label{time-conti-u}
\end{align}
where
\begin{align}
& \mathbf{J}^N= -\frac{\rho_2-\rho_1}{2}m(\phi_h^N, \vartheta_h^N+ \Theta_{\mathrm{b}})\nabla \mu^N,\quad &&\text{a.e. in } \Omega\times (0,\infty),
 \label{time-conti-J}\\
& \mu^N  + c_W (\phi^N+\phi_h^N) = - \Delta \phi^N + F^{\prime}(\phi^N), \quad &&\text{a.e. in } \Omega\times (0,\infty).
\label{time-conti-mu}
\end{align}
Concerning the first term of \eqref{time-conti-u}, we have also used the following integration by parts with respect to time
\begin{align}
\int_0^\infty\!\!\int_\Omega \partial_{t,h}^{-} (\rho^N\u^N )\cdot \v\,\mathrm{d}x\mathrm{d}t=
-\int_0^\infty\!\!\int_\Omega (\rho^N\u^N) \cdot \partial_{t,h}^{+} \v\,\mathrm{d}x\mathrm{d}t.
\label{rhou-intgra}
\end{align}
Analogously, we have
\begin{align}
  &  \int_0^\infty\!\!\int_\Omega \left(\partial_{t,h}^{-}  \phi^N \zeta
  -  (\phi_h^N \u^N) \cdot \nabla \zeta  \right)\,\mathrm{d}x\mathrm{d}t
  + \int_0^\infty\!\!\int_\Omega m(\phi_h^N,\vartheta_h^N
  + \Theta_{\mathrm{b}}) \nabla \mu^N \cdot \nabla \zeta\,\mathrm{d}x\mathrm{d}t
  = 0,
\label{time-conti-phi} \\
	& \int_0^\infty\!\!\int_\Omega \left(\partial_{t,h}^{-} \vartheta^N  \xi
  -  (\vartheta^N \u^N) \cdot \nabla \xi  \right)\,\mathrm{d}x\mathrm{d}t
  + \int_0^\infty\!\!\int_\Omega \kappa(\phi_h^N,\vartheta_h^N
  + \Theta_{\mathrm{b}}) \nabla \vartheta^N \cdot \nabla \xi\,\mathrm{d}x\mathrm{d}t
\notag \\
&\quad = \int_0^\infty\!\!\int_\Omega \left[ (\Theta_{\mathrm{b}} \u^N) \cdot \nabla \xi - \kappa(\phi_h^N,\vartheta_h^N+ \Theta_{\mathrm{b}}) \nabla \Theta_{\mathrm{b}}  \cdot \nabla \xi \right]\,\mathrm{d}x\mathrm{d}t,
\label{time-conti-theta}
\end{align}
for all $\zeta\in C_0((0,\infty); C^\infty(\overline{\Omega}))$, $\xi \in C_0((0,\infty);C_0^\infty(\overline{\Omega}))$.

Define
$$
\widehat{E}_{\mathrm{tot}}(\u^{k},\phi^{k},\vartheta^{k})= (1+C_1 h)^k
E_{\mathrm{tot}}(\u^{k},\phi^{k},\vartheta^{k}),\quad k\in \mathbb{N}.
$$
From the discrete energy inequality \eqref{discrete-energy-inequality}, we obtain
\begin{align}
&\frac{\widehat{E}_{\mathrm{tot}}(\u^{k+1},\phi^{k+1},\vartheta^{k+1})- \widehat{E}_{\mathrm{tot}}(\u^{k},\phi^{k},\vartheta^{k})}{h}
\notag \\
&\quad =
(1+C_1 h)^k\left( \frac{E_{\mathrm{tot}}(\u^{k+1},\phi^{k+1},\vartheta^{k+1})- E_{\mathrm{tot}}(\u^{k},\phi^{k},\vartheta^{k})}{h}
+ C_1E_{\mathrm{tot}}(\u^{k+1},\phi^{k+1},\vartheta^{k+1})\right)
\notag\\
&\quad \leqslant C_2(1+C_1 h)^k.
\label{discre-ineq-1}
\end{align}
Summation over $k$ yields the following result
\begin{align}
\widehat{E}_{\mathrm{tot}}(\u^{k},\phi^{k},\vartheta^{k})
&\leqslant \widehat{E}_{\mathrm{tot}}(\u^{0},\phi^{0},\vartheta^{0}) +
C_2h \sum_{j=0}^{k-1} (1+C_1 h)^j,\quad \forall\, k\in \mathbb{Z}^+,
\notag
\end{align}
which implies
\begin{align}
E_{\mathrm{tot}}(\u^{k},\phi^{k},\vartheta^{k})
&\leqslant (1+C_1 h)^{-k} E_{\mathrm{tot}}(\u^{0},\phi^{0},\vartheta^{0}) +
\frac{C_2}{C_1}\left[1-(1+C_1 h)^{-k}\right]\notag \\
&\leqslant (1+C_1 h)^{-k} E_{\mathrm{tot}}(\u^{0},\phi^{0},\vartheta^{0}) +
\frac{C_2}{C_1},\quad \forall\, k\in \mathbb{Z}^+.
\label{discre-ineq-2}
\end{align}
Let $E^N(t)$ be the piecewise linear interpolant of $E_{\mathrm{tot}}(\u^k, \phi^k, \vartheta^k)$ at $t_k= kh$ given by
$$
E^N(t)= \frac{(k+1)h-t}{h} E_{\mathrm{tot}}(\u^k, \phi^k, \vartheta^k)
+ \frac{t-kh}{h} E_{\mathrm{tot}}(\u^{k+1}, \phi^{k+1}, \vartheta^{k+1})
$$
for $t\in [kh,(k+1)h)$, $k\in \mathbb{N}$. For $t\in (kh, (k+1)h)$, we also define the dissipation function
$$
D^N(t)= \frac{1}{4} \int_\Omega \left(
		 \underline{\nu} | \nabla \u^{k+1} |^2
        + \lambda_0 a \underline{m}  | \nabla \mu^{k+1} |^2
        + \underline{\kappa}  | \nabla \vartheta^{k+1} |^2 \right)\,\mathrm{d}x.
$$
Then it follows from \eqref{discrete-energy-inequality} that
\begin{align}
\frac{\mathrm{d}}{\mathrm{d} t} E^N(t) + D^N(t)
& \leqslant - C_1 E_{\mathrm{tot}}(\u^{k+1},\phi^{k+1}, \vartheta^{k+1})
 + C_2,
\label{conti-energy-inequality}
\end{align}
for all $t\in (kh,(k+1)h)$. Recalling that for all $k\in \mathbb{N}$, $E_{\mathrm{tot}}(\u^{k+1},\phi^{k+1}, \vartheta^{k+1})\geqslant -C_4$ for some positive constant $C_4$ that depends only on $\Omega$, we infer from \eqref{conti-energy-inequality} that
\begin{align}
E_{\mathrm{tot}}(\u^{j},\phi^{j}, \vartheta^{j})
+ \int_{t_i}^{t_j} D^N(t)\,\mathrm{d}\tau
\leqslant E_{\mathrm{tot}}(\u^{i},\phi^{i}, \vartheta^{i}) + (C_1C_4+C_2)(t_j-t_i),
\label{discre-ineq-3}
\end{align}
for all $t_i=ih$, $t_j=jh$ with $i, j\in \mathbb{N}$ and $j\geqslant i$.

From \eqref{discre-ineq-2}, \eqref{discre-ineq-3}, the construction of the initial data $(\u^{0},\phi^{0}, \vartheta^{0})$ and the $L^\infty$-estimates for $\phi^k$, $\theta^k$, we can deduce that
\begin{align*}
&\u^N\ \ \text{is bounded in}\ \ L^\infty(0,\infty; \H_\sigma)\cap L^2_{\mathrm{uloc}}([0,\infty);\V),
\\
&\phi^N \ \ \text{is bounded in}\ \ L^\infty(0,\infty;H^1(\Omega)),\quad \|\phi^N\|_{L^\infty(\Omega)}\leqslant 1\ \ \text{for all}\ t\geqslant 0,
\\
&F(\phi^N) \ \ \text{is bounded in}\ L^\infty(0,\infty; L^1(\Omega)),
\\
&\nabla \mu^N \ \ \text{is bounded in}\ \ L^2_{\mathrm{uloc}}([0,\infty);\bm{L}^2(\Omega)),
\\
&\mathbf{J}^N \ \ \text{is bounded in}\ \ L^2_{\mathrm{uloc}}([0,\infty);\bm{L}^2(\Omega)),
\\
& \vartheta^N\ \ \text{is bounded in}\ \ L^\infty(0,\infty; L^\infty(\Omega))\cap L^2_{\mathrm{uloc}}([0,\infty);H^1(\Omega)).
\end{align*}
Recalling  \eqref{notation-theta}, for $\theta^N:=\vartheta^N+\Theta_\mathrm{b}$, we have
\begin{align*}
& \theta^N\ \ \text{is bounded in}\ \ L^\infty(\Omega\times (0,\infty))\cap L^2_{\mathrm{uloc}}([0,\infty);H^1(\Omega)),
\end{align*}
and moreover,
\begin{align}
&  \min\Big\{\operatorname*{ess\,inf}_{\Omega}\theta^0,\,\min_{\partial\Omega}\theta_{\mathrm{b}}\Big\} \leqslant \theta^N(x) \leqslant \max\Big\{\operatorname*{ess\,sup}_{\Omega}\theta^0,\,\max_{\partial\Omega}\theta_{\mathrm{b}}\Big\},\quad \text{a.e. in}\ \overline{\Omega}.
\label{maximumprinciple-theta-N}
\end{align}
Applying Lemma \ref{lem:mu-es-k}, we can further deduce that
\begin{align*}
&\overline{\mu^N}\ \ \text{is bounded in}\ \ L^2_{\mathrm{uloc}}([0,\infty)),
\\
&  \mu^N \ \ \text{is bounded in}\ \ L^2_{\mathrm{uloc}}([0,\infty);H^1(\Omega)),
\\
&\phi^N \ \ \text{is bounded in}\ \ L^4_{\mathrm{uloc}}([0,\infty);H^2(\Omega))\cap  L^2_{\mathrm{uloc}}([0,\infty);W^{2,p}(\Omega)),
\\
&F'(\phi^N) \ \ \text{is bounded in}\ \ L^2_{\mathrm{uloc}}([0,\infty);L^p(\Omega)),
\end{align*}
for $p\in [2,6]$ if $d=3$, $p\in[2,\infty)$ if $d=2$.

In order to pass to the limit for all nonlinearities, we need to show strong
convergence of $\u^N$, $\phi^N$, $\vartheta^N$ (and also $\theta^N$), by investigating time derivatives of suitable approximate solutions.

Let $\widetilde{\phi}^{N}$ be the piecewise linear interpolant of $\phi^{N}(t^{k})$, where $t^{k}=kh$, $k\in \mathbb{N}$, that is, $%
\widetilde{\phi}^{N}=\frac{1}{h}\chi _{\lbrack 0,h]}\ast _{t}\phi^{N} $, where the convolution is taken only with respect to the time
variable $t$. Similarly, we define  $\widetilde{\vartheta}^{N}=\frac{1}{h}\chi _{\lbrack 0,h]}\ast
_{t}\vartheta^{N} $. Then it follows that
\begin{equation*}
\partial _{t}\widetilde{\phi }^{N}=\partial _{t,h}^{-}\phi ^{N},\quad
\partial _{t}\widetilde{\vartheta }^{N}=\partial _{t,h}^{-}\vartheta^{N},
\quad \text{for almost all}\ t\in (0,\infty),
\end{equation*}%
and
\begin{equation}
\| \widetilde{\phi }^{N}-\phi ^{N}\|_{(H^{1}(\Omega ))'}
\leqslant h\| \partial _{t}\widetilde{\phi }^{N}\|_{(H^{1}(\Omega))'},\quad
\| \widetilde{\vartheta }^{N}-\vartheta ^{N}\|_{H^{-1}(\Omega)}
\leqslant h\| \partial _{t}\widetilde{\vartheta}^{N}\|_{H^{-1}(\Omega)}.
\label{tilde-error}
\end{equation}%
From \eqref{time-conti-phi}, \eqref{time-conti-theta}, we can obtain
\begin{align*}
&\partial _{t,h}^{-}\phi ^{N} \ \ \text{is bounded in}\ \ L^2_{\mathrm{uloc}}([0,\infty);(H^1(\Omega))'),
\\
&\partial _{t,h}^{-}\vartheta^{N} \ \ \text{is bounded in}\ \
L^2_{\mathrm{uloc}}([0,\infty); H^{-1}(\Omega)),
\end{align*}
the first follows from the boundedness of $\phi^N\u^N$, $\nabla \mu^N$ in $L^2_{\mathrm{uloc}}([0,\infty);\bm{L}^2(\Omega))$, while the second follows from the boundedness of $\vartheta^N\u^N$ and $\nabla \vartheta^N$ in $L^2_{\mathrm{uloc}}([0,\infty);\bm{L}^2(\Omega))$.
By definition, we easily find
	\begin{align*}
		&\partial _{t}\widetilde{\phi }^{N} \ \ \text{is bounded in}\ \ L^2_{\mathrm{uloc}}([0,\infty);(H^1(\Omega))'),
		\\
		&\partial _{t}\widetilde{\vartheta }^{N} \ \ \text{is bounded in}\ \
		L^2_{\mathrm{uloc}}([0,\infty); H^{-1}(\Omega)).
\end{align*}

Next, let $\widetilde{\rho \u}^N$ be the piecewise linear interpolant of $\rho^N\u^N(t^k)$, where $t^{k}=kh$, $k\in \mathbb{N}$.
We can check that $\bm{P}(\widetilde{\rho \u}^N)$ is bounded in $L^\frac{4}{3}_{\mathrm{uloc}}([0,\infty);\V)$, which is a consequence of
the boundedness of $\u^N$ in $L^\infty(0,\infty;\H_\sigma)\cap L^2_{\mathrm{uloc}}([0,\infty); \V)$ and the boundedness of $\phi^N$ in $L^2_{\mathrm{uloc}}([0,\infty); W^{2,6}(\Omega))\cap L^\infty(\Omega\times (0,\infty))$.
Recalling the estimates (cf. \cite{JMFM})
\begin{align*}
&\rho_h^N \u^N\otimes\u^N \ \ \text{is bounded in}\ \ L^2_{\mathrm{uloc}}([0,\infty);\bm{L}^\frac32(\Omega)),
\\
& \u^N\otimes \mathbf{J}^N \ \ \text{is bounded in}\ \ L^\frac{8}{7}_{\mathrm{uloc}}([0,\infty);\bm{L}^\frac43(\Omega)),
\\
& \mu^N\nabla \phi_h^N \ \ \text{is bounded in}\ \ L^2_{\mathrm{uloc}}([0,\infty);\bm{L}^\frac32(\Omega)),
\end{align*}
and the following fact
\begin{align}
\|(\vartheta_h^N+ \Theta_{\mathrm{b}})(\nabla \phi^N\otimes \nabla \phi^N)\|_{L^2(\Omega)}
 & \leqslant \|\theta_h^N\|_{L^\infty(\Omega)}\|\nabla \phi^N\|_{L^\infty(\Omega)}\| \nabla \phi^N\|\notag
 \\
 &\leqslant C\|\theta_h^N\|_{L^\infty(\Omega)}\|\phi^N\|_{W^{2,4}(\Omega)}\|\phi^N\|_{H^1(\Omega)},\notag
\end{align}
which implies
\begin{align*}
(\vartheta_h^N+ \Theta_{\mathrm{b}})(\nabla \phi^N\otimes \nabla \phi^N)\ \ \text{is bounded in}\ \ L^2_{\mathrm{uloc}}([0,\infty);\bm{L}^2(\Omega)),
\end{align*}
then from \eqref{time-conti-u} and \eqref{rhou-intgra}, we can deduce that
\begin{align*}
\partial_t(\widetilde{\rho \u}^N)=\partial^{-}_{t,h}(\rho^N\u^N)\ \ \text{is bounded in}\ \ L^\frac{8}{7}_{\mathrm{uloc}}([0,\infty);\bm{W}^{-1,4}(\Omega)).
\end{align*}

\textbf{Step 3. Passage to the limit as $N\to \infty$.}
In Step 2, we have collected all the ingredients necessary to draw the conclusion of Theorem \ref{weaksolution-3d}. In particular, all the estimates obtained above are independent of the parameters $N$, $h$ and $k$. Hence, applying the same compactness argument as in \cite{JMFM,Wangxiaoming} with minor modifications, we can pass to the limit as $N \to \infty$ (always understood in the sense of a convergent subsequence) to obtain a global weak solution $(\u,\phi,\mu,\vartheta)$ (and thus $\theta=\vartheta+\Theta_\mathrm{b}$) of  problem \eqref{NSCHM}--\eqref{initial} on $[0,\infty)$ with required regularity properties.

\begin{remark}\rm
When applying the compactness argument in \cite{JMFM,Wangxiaoming}, since iteration step $k$ can be taken arbitrarily large, for any given final time $L\in \mathbb{Z}^+$, we can extract a convergent subsequence $\{(\u^N,\phi^N,\mu^N,\theta^N)\}$ on the finite interval $[0,L]$ to find a weak solution $(\u, \phi, \mu, \theta)$ to problem \eqref{NSCHM}--\eqref{initial} defined on $[0,L]$.
However, since the uniqueness of weak solutions is not known, with uniform-in-time estimates for the approximate solutions, we cannot simply ``extend'' the weak solution from $[0,L]$ to the whole interval $[0,\infty)$ by letting $L\to \infty$. This issue can be solved by applying a diagonal argument as in \cite{BF2013} for the classical Navier--Stokes equations. Roughly speaking, whenever we obtain a convergent subsequence $\{(\u^N,\phi^N,\mu^N,\theta^N)\}$ on $[0,L]$, $L\in \mathbb{Z}^+$, we extract a further subsequence that converges on $[0,L+1]$. The diagonal argument then enables us to obtain a sequence $\{(\u^N,\phi^N,\mu^N,\theta^N)\}$ (not relabeled for simplicity) that converges on $[0,T]$ for any $T>0$, as $N\to \infty$.
\end{remark}

In the following, we will not repeat the limiting process as $N\to \infty$ mentioned above. To finish the proof, we just sketch the convergence for the nonlinear term $\theta_h^N(\nabla \phi^N\otimes \nabla \phi^N)$ due to the Marangoni effect. Below the convergence as $N\to\infty$ will be understood in the sense of a subsequence, and the convergent subsequence will not be relabeled for simplicity.

With the uniform estimates obtained in Step 2, we recall that the following weak and strong convergence results can be obtained for $\phi^N$ as $N\to \infty$ (see \cite[Section 5.1]{JMFM})
\begin{align*}
& \phi^N \to \phi, \qquad  \text{weakly star in}\ \ L^\infty(0,\infty;H^1(\Omega)),\\
& \phi^N \to \phi, \qquad  \text{strongly in}\ \ L^2(0,T;H^1(\Omega)), \ \ \text{for any} \ T\in(0,\infty),
\end{align*}
for some function $\phi\in L^\infty(0,\infty;H^1(\Omega))\cap C_w([0,\infty);H^1(\Omega))$. Moreover, the boundedness of $\phi^N$ in $L^4_{\mathrm{uloc}}([0,\infty);H^2(\Omega))\cap L^2_{\mathrm{uloc}}([0,\infty);W^{2,6}(\Omega))$ also implies
\begin{align*}
& \phi^N \to \phi, \qquad  \text{weakly in}\ \ L^4(0,T;H^2(\Omega))\cap L^2(0,T;W^{2,6}(\Omega)), \ \ \text{for any} \ T\in(0,\infty),
\end{align*}
with $\phi\in L^4_{\mathrm{uloc}}([0,\infty);H^2(\Omega))\cap L^2_{\mathrm{uloc}}([0,\infty);W^{2,6}(\Omega))$.
The linear dependence of $\rho$ on $\phi$ enables us to derive analogous results for $\rho^N$. Similar arguments also apply to $\vartheta^N$. First, we easily find that as $N\to \infty$, it holds
\begin{align*}
& \vartheta^N \to \vartheta, \qquad  \text{weakly star in}\ \ L^\infty(\Omega \times (0,\infty) ),\\
& \vartheta^N \to \vartheta, \qquad  \text{weakly in}\ \ L^2(0,T;H^1(\Omega)),\ \ \text{for any} \ T\in(0,\infty),
\end{align*}
for some function $\vartheta\in L^\infty(\Omega \times (0,\infty)) \cap L^2_{\mathrm{uloc}}([0,\infty);H^1(\Omega))$.
Next, by the uniform estimates for $\widetilde{\vartheta}^N$ and the Aubin--Lions compactness lemma, we get
\begin{align*}
& \widetilde{\vartheta}^N \to \widetilde{\vartheta}, \qquad  \text{strongly in}\ \ L^2(0,T;L^2(\Omega)), \ \ \text{for any} \ T\in(0,\infty),
\end{align*}
for some function $\widetilde{\vartheta}\in L^2_{\mathrm{uloc}}([0,\infty);H^1(\Omega)) \cap H^1_{\mathrm{uloc}}([0,\infty); H^{-1}(\Omega))$.
Then it follows from the Lions--Magenes lemma (see e.g., \cite[Lemma 7.1]{Roubicek2005}, or \cite[Theorem 4.10.2, Chapter III]{Amann}) that the limit  $\widetilde{\vartheta}$ satisfies  $\widetilde{\vartheta} \in C([0,\infty);L^2(\Omega))\cap L^\infty(0,\infty;L^2(\Omega))$.
On the other hand, \eqref{tilde-error} implies that as $N\to \infty$,
$$
\vartheta^N-\widetilde{\vartheta}^N \to 0 \qquad \text{strongly in}\ \ L^2(0,T;H^{-1}(\Omega)), \ \ \text{for any} \ T\in(0,\infty),
$$
since $\partial _{t}\widetilde{\vartheta }^{N}$ is uniformly bounded in $L^2_{\mathrm{uloc}}([0,\infty); H^{-1}(\Omega))$, and $h \to 0$.
As a result, we have $\vartheta=\widetilde{\vartheta}$. From the fact
\begin{align*}
\|\vartheta^N -\vartheta\|
&\leqslant \| \vartheta^N-\widetilde{\vartheta}^N\|+\|\widetilde{\vartheta}^N - \widetilde{\vartheta}\|\\
&\leqslant \| \vartheta^N-\widetilde{\vartheta}^N\|_{H^1(\Omega)}^\frac12
\| \vartheta^N-\widetilde{\vartheta}^N\|_{H^{-1}(\Omega)}^\frac12+\|\widetilde{\vartheta}^N - \widetilde{\vartheta}\|,
\end{align*}
we arrive at
$$
\vartheta^N  \to \vartheta  \qquad  \text{strongly in}\ \ L^2(0,T;L^2(\Omega)), \ \ \text{for any} \ T\in(0,\infty).
$$
By the $L^\infty$-boundedness of $\vartheta^N$, $\vartheta$ and the interpolation, for any $q\in (2,\infty)$, we get
$$
\vartheta^N  \to \vartheta  \qquad  \text{strongly in}\ \ L^q(0,T;L^q(\Omega)), \ \ \text{for any} \ T\in(0,\infty).
$$
Recalling the definition of $\vartheta_h^N$, we also find that
$$
\vartheta_h^N  \to \vartheta  \qquad  \text{strongly in}\ \ L^q(0,T;L^q(\Omega)), \ \ \text{for any} \ T\in(0,\infty).
$$

Let $T\in (0,\infty)$ be given. With the above convergence results, for any $\bm{v}\in L^4(0,T;\bm{W}^{1,3}(\Omega))$, we consider the difference
\begin{align*}
& \int_0^T \!\! \int_\Omega \big[\theta_h^N(\nabla \phi^N\otimes \nabla \phi^N)- \theta(\nabla \phi\otimes \nabla \phi)\big] : \nabla \bm{v}\,\mathrm{d}x\mathrm{d}t
\\
&\quad =
 \int_0^T\!\!\int_\Omega  (\vartheta_h^N-\vartheta) (\nabla \phi^N\otimes \nabla \phi^N): \nabla \bm{v}\,\mathrm{d}x\mathrm{d}t
+ \int_0^T\!\!\int_\Omega  \big[\theta  \nabla \phi^N\otimes (\nabla \phi^N-\nabla \phi)\big]: \nabla \bm{v}\,\mathrm{d}x\mathrm{d}t\\
&\qquad + \int_0^T\!\!\int_\Omega \big[ \theta( \nabla \phi^N -\nabla \phi)\otimes \nabla \phi\big]: \nabla \bm{v}\,\mathrm{d}x\mathrm{d}t.
\end{align*}
The first term on the right-hand side can be estimated as follows
\begin{align*}
&\left|\int_0^T\!\!\int_\Omega  (\vartheta_h^N-\vartheta) (\nabla \phi^N\otimes \nabla \phi^N): \nabla \bm{v}\,\mathrm{d}x\mathrm{d}t\right|
\\
&\quad \leqslant  \int_0^T \|\vartheta_h^N-\vartheta\|_{L^6(\Omega)} \|\nabla \phi^N\|_{L^4(\Omega)}^2\|\nabla \bm{v}\|_{L^3(\Omega)}\,\mathrm{d}t\\
&\quad
\leqslant  C \int_0^T \|\vartheta_h^N-\vartheta\|_{L^6(\Omega)} \|\phi^N\|_{H^2(\Omega)}^\frac{3}{2}\|\phi^N\|_{H^1(\Omega)}^\frac12\|\nabla \bm{v}\|_{L^3(\Omega)}\,\mathrm{d}t
\\
&\quad \leqslant \|\vartheta_h^N-\vartheta\|_{L^6(0,T;L^6(\Omega))} \|\phi^N\|_{L^4(0,T;H^2(\Omega))}^\frac{3}{2} \|\phi^N\|_{L^\infty(0,T;H^1(\Omega))}^\frac12
\|\nabla \bm{v}\|_{L^\frac{24}{11}(0,T;L^3(\Omega))}
\\
&\quad \to 0\quad \text{as} \ N\to \infty.
\end{align*}
Concerning the second term, we see that
\begin{align*}
& \left| \int_0^T\!\!\int_\Omega  \big[\theta  \nabla \phi^N\otimes (\nabla \phi^N-\nabla \phi)\big]: \nabla \bm{v}\,\mathrm{d}x\mathrm{d}t\right|
\\
&\quad \leqslant \|\theta\|_{L^\infty(\Omega\times(0,T))}
\int_0^T \|\nabla \phi^N\|_{L^6(\Omega)}\|\nabla \phi^N-\nabla \phi\|\|\nabla \bm{v}\|_{L^3(\Omega)}\,\mathrm{d}t\\
&\quad \leqslant C \|\theta\|_{L^\infty(\Omega\times(0,T))}
\|\phi^N\|_{L^4(0,T;H^2(\Omega))}\|\phi^N-\phi\|_{L^2(0,T;H^1(\Omega))}\|\nabla \bm{v}\|_{L^4(0,T;L^3(\Omega))}\\
&\quad \to 0\quad \text{as} \ N\to \infty.
\end{align*}
Finally, noticing that $\nabla \phi^N\to \nabla \phi$ weakly in $L^4(0,T; L^6(\Omega))$ and
\begin{align*}
\|\theta\partial_i\phi\partial_j\bm{v}\|_{L^\frac{4}{3}(0,T;L^\frac{6}{5}(\Omega))}
& \leqslant \|\theta\|_{L^\infty(\Omega\times(0,T))}
\|\partial_i\phi\|_{L^4(0,T;L^2(\Omega))} \|\partial_j\bm{v}\|_{L^2(0,T;L^3(\Omega))},
\end{align*}
we obtain the convergence for the third term
\begin{align*}
\int_0^T\!\!\int_\Omega \big[ \theta( \nabla \phi^N -\nabla \phi)\otimes \nabla \phi\big]: \nabla \bm{v}\,\mathrm{d}x\mathrm{d}t \to 0\quad \text{as} \ N\to \infty.
\end{align*}
Hence, for any test function $\bm{v}\in L^4(0,T;\bm{W}^{1,3}(\Omega))$, it holds
\begin{align*}
& \int_0^T \!\! \int_\Omega \big[\theta_h^N(\nabla \phi^N\otimes \nabla \phi^N)- \theta(\nabla \phi\otimes \nabla \phi)\big] : \nabla \bm{v}\,\mathrm{d}x\mathrm{d}t \to 0\quad \text{as} \ N\to \infty.
\end{align*}

Using the convergence results above, we can pass to the limit as $N\to \infty$ in
\eqref{time-conti-u}--\eqref{time-conti-theta} to obtain \eqref{weaku}--\eqref{weakmu}. In addition, following the argument in \cite{JMFM}, we see that the initial data $(\u_0,\phi_0,\theta_0)$ can be attained.
 	
The proof of Theorem \ref{weaksolution-3d} is complete. \qed

\section{Uniqueness in Two Dimensions} \label{section-of-weak-2d}
This section is devoted to the proof of Theorem \ref{weaksolution-2d}. We first investigate an auxiliary problem for the convective heat equation with a pair of given data $(\u, \phi)$. By analyzing the regularity of solutions to the auxiliary problem, we establish the existence of global weak solutions to the two-dimensional problem \eqref{NSCHM}--\eqref{initial} with improved regularity properties for $\theta$, provided that the initial temperature satisfies $\theta_0 \in C^{\gamma}(\overline{\Omega}) \cap H_0^1(\Omega)$ for some $\gamma \in (0,1)$.
After that, we prove the uniqueness result in two dimensions under additional structural assumptions.

\subsection{An auxiliary problem}
Let $(\bm{u},\phi)$ be given functions. We consider the following initial boundary value problem of a convective heat equation:
\begin{align}
&\partial_t \theta + \u \cdot \nabla \theta =
\mathrm{div} ( \kappa(\phi, \theta) \nabla \theta ), \quad \,\text{in}~ \Omega \times (0,\infty),  \label{uphitotheta1} \\
&\theta |_{\partial \Omega} = \theta_{\mathrm{b}},\qquad\qquad \qquad  \qquad \qquad\, \text{on}\ \partial\Omega\times (0,\infty),
\label{uphitotheta2}\\
&\theta |_{t = 0} = \theta_0, \qquad \qquad \qquad \qquad \quad \ \ \  \text{in}\ \Omega.
\label{uphitotheta3}
\end{align}

\begin{proposition} \label{uphitotheta}
Suppose that $\Omega \subset \mathbb{R}^2$ is a bounded domain with $C^2$-boundary $\partial \Omega$, and the assumption $\mathbf{(A3)}$ is  satisfied.
\begin{enumerate}[label=\textup{(\roman*)}]
\item Assume that $\u \in L^2_{\mathrm{uloc}}([0,\infty) ; \bm{H}_{\sigma})$, $\phi\in L^\infty(\Omega \times (0,\infty))$, $\theta_0 \in L^{\infty}(\Omega)$, $\theta_{\mathrm{b}} \in H^{\frac{3}{2}}(\partial \Omega)$.
Then problem \eqref{uphitotheta1}--\eqref{uphitotheta3} admits a global weak solution that satisfies
$$
\theta \in L^\infty(\Omega\times (0,\infty)) \cap
L^2_{\mathrm{uloc}} ([0, \infty) ; H^1(\Omega))
\cap
H^1_{\mathrm{uloc}} ([0, \infty) ; H^{-1}(\Omega)),
$$
and for almost all $(x,t) \in \Omega \times (0, \infty)$,
\begin{equation}\label{theta_bound}
		\min\left\{
		\operatorname*{ess\,inf}_{\Omega}\theta_0,\,
		\min_{\partial\Omega}\theta_{\mathrm{b}}
		\right\}
        \leqslant
		\theta(x,t) 
        \leqslant
		\max\left\{
		\operatorname*{ess\,sup}_{\Omega}\theta_0,\,
		\max_{\partial\Omega}\theta_{\mathrm{b}}
		\right\}.
\end{equation}
\item Assume that
\begin{align*}
& \u \in L^{\infty}(0, \infty ; \bm{H}_{\sigma}) \cap L^2_{\mathrm{uloc}}([0,\infty) ; \V),\\
& \phi \in L^{\infty}(0, \infty ; H^1(\Omega)) \cap L^2_{\mathrm{uloc}}([0, \infty); H^2_\mathbf{n}(\Omega))\cap L^\infty(\Omega\times (0,\infty)),\\
& \theta_0 \in C^{\gamma}(\overline{\Omega}) \cap H^1(\Omega)\ \ \text{for some}\ \gamma \in (0,1),\quad \theta_0|_{\partial\Omega}=\theta_{\mathrm{b}} \in H^{\frac{3}{2}}(\partial \Omega).
\end{align*}
Then problem \eqref{uphitotheta1}--\eqref{uphitotheta3} admits a global strong solution that satisfies
\begin{equation}
\theta \in L^{\infty} (0, \infty ; H^1(\Omega) \cap C^{\beta}(\overline{\Omega}) )
\cap
L_{\mathrm{uloc}}^2([0, \infty) ; H^2(\Omega))
\cap
H_{\mathrm{uloc}}^1([0, \infty); L^2(\Omega)),
\label{theta_regularity}
\end{equation}
for any $\beta \in (0, \gamma]$.

\item Let the assumptions in (ii) be satisfied.
Suppose that $\theta_1$ is a global strong solution and $\theta_2$ is a global weak solution to problem \eqref{uphitotheta1}--\eqref{uphitotheta3} subject to the same boundary and initial data.
Then we have $\theta_1(t) = \theta_2(t)$ for all $t\geqslant 0$.
\end{enumerate}
\end{proposition}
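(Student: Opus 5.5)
For part (i), we use a time discretization of the linear heat equation alone, with $(\u,\phi)$ given. After shifting $\vartheta=\theta-\Theta_{\mathrm b}$ as in \eqref{Eq:vartheta}, we freeze the coefficient $\kappa(\phi,\theta^k)$. At each step we solve the resulting linear elliptic problem for $\vartheta^{k+1}$ by Lax--Milgram. The time averages of $\u$ and $\phi$ over $[kh,(k+1)h]$ serve as data; for $\u\in\bm H_\sigma$ only, the convection term $(\u\cdot\nabla\vartheta,\xi)$ should be mollified in space first, with the mollification parameter removed at the end.

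The Stampacchia argument of Lemma \ref{Stampacchia-truncation-method-remark} gives \eqref{theta_bound} uniformly, since it uses only $\mathrm{div}\,\u=0$. Testing by $\vartheta$ gives $L^2_{\mathrm{uloc}}H^1$ bounds with constants independent of the step. Here we write $(\u\cdot\nabla\Theta_{\mathrm b},\vartheta)=-(\u\,\Theta_{\mathrm b},\nabla\vartheta)$ so that only $\u\in L^2$ is needed. The $H^{-1}$ bound on the time derivative follows since $\theta\u\in L^2_{\mathrm{uloc}}L^2$. Aubin--Lions plus the $L^\infty$ bound then let us pass to the limit in $\kappa(\phi,\theta)$, as in Step 3 of the proof of Theorem \ref{weaksolution-3d}.

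For part (ii), we first obtain Hölder continuity. In two dimensions the regularity $\u\in L^\infty L^2\cap L^2H^1$ gives $\u\in L^4(\Omega\times(0,T))$, uniformly on unit intervals. We view \eqref{uphitotheta1} as a divergence-form equation $\partial_t\theta-\mathrm{div}(a(x,t)\nabla\theta-\u\theta)=0$ with bounded, uniformly elliptic $a=\kappa(\phi,\theta)$, drift in $L^4_{t,x}$, and Hölder initial and boundary data. The De Giorgi--Nash--Moser / Ladyzhenskaya--Ural'tseva theory (this is the Hölder estimate referred to as \eqref{GiorginiHolder}, cf. \cite{me}) then gives $\theta\in C^{\beta}(\overline\Omega\times[t,t+1])$ with bounds uniform in $t$. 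Here $L^4_{t,x}$ is a subcritical class when $d=2$, since $2/4+2/4<1$ fails but equality is admissible in the LSU framework.

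Next we prove the $H^1$/$H^2$ estimate. We test by $-\Delta\vartheta$, or better by $\partial_t\vartheta$ with the Kirchhoff transform $K(\phi,\theta)=\int_0^\theta\kappa(\phi,s)\,ds$, and estimate the commutator term $\nabla\kappa\cdot\nabla\theta=(\partial_1\kappa\nabla\phi+\partial_2\kappa\nabla\theta)\cdot\nabla\theta$. The $|\nabla\theta|^2$ part is the main obstacle: it is quadratic. I would handle it with the Hölder continuity just obtained. Localizing on small balls where the oscillation of $\theta$ is small, a Campanato/freezing argument (or the Gagliardo--Nirenberg estimate $\|\nabla\theta\|_{L^4}^2\le C\|\theta-c\|_{L^\infty}\|\theta\|_{H^2}$) makes the small factor $\mathrm{osc}\,\theta$ absorb the term into $\underline\kappa\|\Delta\theta\|^2$. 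The terms $\nabla\phi\cdot\nabla\theta$ and $\u\cdot\nabla\theta$ are handled with Ladyzhenskaya's inequality, using $\phi\in L^2H^2$ and $\u\in L^2H^1$. A uniform Gronwall lemma then yields \eqref{theta_regularity}. The compatibility condition $\theta_0|_{\partial\Omega}=\theta_{\mathrm b}$ makes $\vartheta_0\in H^1_0$, so the estimate starts at $t=0$. For rigor these estimates are performed on the Galerkin/discrete approximations and passed to the limit.

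For part (iii), we set $\delta=\theta_1-\theta_2$ and test the difference by $\delta\in H^1_0$, which is admissible since $\theta_2\in L^2H^1\cap H^1H^{-1}$. The convection terms vanish by $\mathrm{div}\,\u=0$. The remaining term is $((\kappa(\phi,\theta_1)-\kappa(\phi,\theta_2))\nabla\theta_1,\nabla\delta)$, bounded by $C\|\delta\|_{L^4}\|\nabla\theta_1\|_{L^4}\|\nabla\delta\|\le C\|\delta\|^{1/2}\|\nabla\delta\|^{3/2}\|\theta_1\|_{H^2}^{1/2}$, using that $\theta_1$ is bounded in $H^1$. Young's inequality gives $\frac{d}{dt}\|\delta\|^2\le C\|\theta_1\|_{H^2}^2\|\delta\|^2$, and since $\|\theta_1\|_{H^2}^2\in L^1_{\mathrm{loc}}$, Gronwall with $\delta(0)=0$ gives uniqueness.
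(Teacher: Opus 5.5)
Your proposal is correct. Parts (i) and (iii) follow the paper. For (i), the paper only performs a priori estimates and refers to \cite{LB99} for the construction; it also bounds $\|\vartheta(t)\|^2$ by a Gronwall-type lemma, whereas you get this for free from the maximum principle, which is simpler.

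The real difference is in (ii), in how you handle the quadratic term $\int_\Omega\partial_2\kappa\,|\nabla\theta|^2\Delta\vartheta\,\mathrm{d}x$. Note that \eqref{GiorginiHolder} is not the De Giorgi--Nash-type regularity result; the paper imports that, namely \eqref{thetaalpha}, from \cite[Lemma 3.2]{ZZF2013}. Instead, \eqref{GiorginiHolder} is the interpolation inequality $\|\theta\|_{W^{1,4}(\Omega)}\leqslant C\|\theta\|_{C^\beta(\overline{\Omega})}^{\xi}\|\theta\|_{H^2(\Omega)}^{1-\xi}$ with $\xi\in(\frac12,1)$.

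Combined with the uniform H\"older bound, this inequality makes every term a pure forcing:
\begin{itemize}
\item The quadratic term becomes $\|\nabla\theta\|_{L^4}^2\|\Delta\vartheta\|\leqslant C\|\theta\|_{H^2}^{3-2\xi}$ with $3-2\xi<2$, so Young's inequality absorbs it globally, with no localization.
\item The same inequality turns the $\partial_1\kappa\,\nabla\phi\cdot\nabla\theta$ term into $C(\|\phi\|_{H^2}^{1/\xi}+1)$.
\item The convective term is integrated by parts to $-\int_\Omega\nabla\u:(\nabla\theta\otimes\nabla\theta)\,\mathrm{d}x$ and bounded via \eqref{GL-L4} and the maximum principle.
\end{itemize}
The result is \eqref{2D-H1b}, with no multiple of $\|\nabla\vartheta\|^2$ on the right-hand side.

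Your route also closes. You cover $\Omega$ by balls of radius $r$ on which $\operatorname{osc}\theta\leqslant Cr^\beta$, and a localized Gagliardo--Nirenberg inequality gives $\|\nabla\theta\|_{L^4}^2\leqslant Cr^{\beta}\|\theta\|_{H^2}+C_r(\|\nabla\theta\|+1)$. Ladyzhenskaya's inequality then handles the $\u$ and $\nabla\phi$ terms. The outcome is an inequality $y'\leqslant g(t)y+h(t)$, which needs the uniform Gronwall lemma together with $\int_t^{t+1}\|\nabla\vartheta\|^2\,\mathrm{d}\tau\leqslant C$ from (i). It also requires treating boundary balls and tracking $r$-dependent constants. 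Its advantage is that it uses only a uniform modulus of continuity of $\theta$, not a quantitative H\"older exponent. The paper's route is shorter and yields a dissipative inequality directly.

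There are two small inaccuracies.
\begin{itemize}
\item $L^4(\Omega\times(t,t+1))$ is exactly the scaling-critical drift class in two dimensions ($\frac24+\frac24=1$), not a subcritical one. The standard LSU hypotheses require strict inequality, so the H\"older estimate must exploit either smallness of $\|\u\|_{L^4}$ on small cylinders or the divergence-free structure. Its uniformity in $t$ is precisely what the paper delegates to \cite{ZZF2013}.
\item The Kirchhoff-transform variant is not \emph{better} here. Since $\kappa$ depends on $\phi$, $\partial_t K(\phi,\theta)$ contains $\partial_1K\,\partial_t\phi$, and the hypotheses of (ii) give no time regularity of $\phi$ at all. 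Testing by $-\Delta\vartheta$ is the right choice.
\end{itemize}
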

\begin{proof}
The existence of a global weak/strong solution to problem \eqref{uphitotheta1}--\eqref{uphitotheta3} can be proven by a standard argument as in \cite{LB99}. In the following, we simply perform the necessary \emph{a priori} estimates.

(i) The Stampacchia method easily yields the estimate \eqref{theta_bound}, cf. e.g., \cite[Lemma 3.5]{LB99}. Next, using \eqref{Theta_b} with $\theta_\mathrm{b}\in H^\frac32(\partial\Omega)$, we rewrite \eqref{uphitotheta1} as (cf. \eqref{Eq:vartheta})
\begin{align}
& \partial_t \vartheta + \u \cdot \nabla \vartheta - \mathrm{div}\,(\kappa(\phi,\vartheta+ \Theta_{\mathrm{b}}) \nabla \vartheta)
= -\u \cdot \nabla \Theta_{\mathrm{b}} + \mathrm{div}\,(\kappa(\phi,\vartheta+ \Theta_{\mathrm{b}}) \nabla \Theta_{\mathrm{b}}),
\label{2D:vartheta}
\end{align}
where $\vartheta=\theta-\Theta_\mathrm{b}$ with $\Theta_\mathrm{b}\in H^2(\Omega)$. Multiplying \eqref{2D:vartheta} by $\vartheta$ and integrating over $\Omega$, we have
\begin{align}
& \frac{1}{2} \frac{\d}{\dt} \| \vartheta \|^2 + \int_{\Omega} \kappa(\phi, \theta) | \nabla \vartheta |^2 \dx \notag  \\
&\quad = -\int_\Omega (\u \cdot \nabla \Theta_{\mathrm{b}})\vartheta\,\mathrm{d}x -\int_\Omega \kappa(\phi,\theta) \nabla \Theta_{\mathrm{b}}\cdot \nabla \vartheta\,\mathrm{d}x \notag \\
&\quad \leqslant \|\bm{u}\|\|\nabla \Theta_\mathrm{b}\|_{L^4(\Omega)}\|\vartheta\|_{L^4(\Omega)} +\|\kappa\|_{L^\infty(\Omega)}\|\nabla \Theta_\mathrm{b}\|\|\nabla \vartheta\| \notag \\
&\quad \leqslant \frac{\underline{\kappa}}{2} \|\nabla \vartheta\|^2 + C\|\Theta_\mathrm{b}\|_{H^2(\Omega)}^2\|\bm{u}\|^2  + C\|\Theta_\mathrm{b}\|_{H^1(\Omega)}^2.
\label{2D-L2a}
\end{align}
In the above estimate, we have used the $L^\infty$-bounds of $\phi$, $\theta$ and the assumption $\mathbf{(A3)}$. By Poincar\'{e}'s inequality and the elliptic estimate, we get
\begin{align}
\frac{\d}{\dt} \| \vartheta \|^2 + C\underline{\kappa}\|\vartheta \|^2 \leqslant C\|\theta_\mathrm{b}\|_{H^\frac32(\partial\Omega)}^2(\|\bm{u}\|^2+1),\label{2D-L2b}
\end{align}
where the positive constant $C$ depends on $\Omega$, $\kappa$ and $L^\infty$-bounds of $\phi$, $\theta$. Since $\u \in L^2_{\mathrm{uloc}}([0,\infty) ; \bm{H}_{\sigma})$, we can apply a Gronwall-type inequality (see \cite[Lemma 2.5]{GGP}) and obtain
\begin{align}
\|\vartheta(t)\|^2\leqslant 2 \|\vartheta(0)\|^2e^{-C\underline{\kappa}t} + \frac{2e^{C\underline{\kappa}}}{1-e^{-C\underline{\kappa}}}\sup_{t\geqslant 0}\int_t^{t+1}\|\bm{u}(\tau)\|^2\,\mathrm{d}\tau.
\label{2D-L2c}
\end{align}
Integrating \eqref{2D-L2a} with respect to time, we infer from \eqref{2D-L2c} that
\begin{align}
\int_t^{t+1}\|\nabla \vartheta(\tau)\|^2\,\mathrm{d}\tau\leqslant C, \quad \forall\, t\geqslant 0,
\label{2D-L2d}
\end{align}
where $C>0$ is independent of $t$. The estimates \eqref{2D-L2c} and \eqref{2D-L2d} imply that $\theta \in L^2_{\mathrm{uloc}}([0, \infty) ; H^1(\Omega))$. By comparison in the equation for $\theta$, we further get  $\partial_t\theta\in L^2_\mathrm{uloc}([0,\infty);H^{-1}(\Omega))$.
\smallskip

(ii) Keeping in mind that $\Theta_\mathrm{b}$ is independent of time and $\Delta \Theta_\mathrm{b}=0$, we test \eqref{2D:vartheta} by $-\Delta \vartheta$ and integrate over $\Omega$. This gives
\begin{align}
&\frac{1}{2} \frac{\d}{\dt} \| \nabla \vartheta \|^2
+ \int_{\Omega} \kappa(\phi, \theta) | \Delta \vartheta |^2 \dx \notag \\
&\quad = \int_{\Omega} (\u \cdot \nabla \theta) \Delta \vartheta \dx
- \int_{\Omega} \partial_1 \kappa (\nabla \phi \cdot \nabla \theta) \Delta \vartheta \dx
- \int_{\Omega} \partial_2 \kappa |\nabla \theta|^2 \Delta \vartheta \dx.
\label{2D-H1a}
\end{align}
The first term on the right-hand side can be estimated using \eqref{GL-L4} for $\theta$ and the elliptic estimate, namely,
\begin{align*}
\int_{\Omega} (\u \cdot \nabla \theta) \Delta \vartheta \dx
& =\int_{\Omega} (\u \cdot \nabla \theta) \Delta \theta \dx
= - \int_{\Omega} \nabla \u : (\nabla \theta \otimes \nabla \theta) \dx \\
&\leqslant C \| \nabla \u \| \| \nabla \theta \|_{L^4(\Omega)}^2 \\
&\leqslant C \| \nabla \u \| \| \theta \|_{L^{\infty}(\Omega)} \big( \| \Delta \theta \| + \| \theta \| + \|\theta_\mathrm{b}\|_{H^\frac32(\partial\Omega)} \big) \\
&\leqslant \frac{\underline{\kappa}}{6} \| \Delta \vartheta \|^2
+ C (\| \nabla \u \|^2 + 1 ).
\end{align*}
Since $\u \in L^{\infty}(0, \infty ; \bm{H}_{\sigma}) \cap L^2_{\mathrm{uloc}}([0,\infty) ; \V)$, by interpolation we have
$ \u \in L^4_{\mathrm{uloc}}([0,\infty) ; \bm{L}^4(\Omega) )$.
For initial datum $\theta_0\in C^{\gamma}(\overline{\Omega}) \cap H^1(\Omega)$, we can apply the same argument as in \cite[Lemma 3.2]{ZZF2013} to conclude that
\begin{align}
	\|\theta\|_{L^{\infty} (0,\infty; C^{\beta}(\overline{\Omega}))} \leqslant C, \label{thetaalpha}
\end{align}
for some $\beta \in(0, \gamma]$. By the H\"{o}lder estimate \eqref{thetaalpha} and the elliptic estimate, we deduce that
\begin{align*}
- \int_{\Omega} \partial_2 \kappa |\nabla \theta|^2 \Delta \vartheta \dx
&\leqslant \|\partial_2 \kappa\|_{L^{\infty}(\Omega)} \|\nabla \theta\|^2_{L^4(\Omega)} \| \Delta \vartheta\|    \\
&\leqslant C \|\theta\|_{C^{\beta}(\overline{\Omega})}^{2 \xi} \| \theta\|_{H^2(\Omega)}^{2-2\xi} \| \Delta \vartheta\|  \\
&\leqslant C \|\theta\|_{C^{\beta}(\overline{\Omega})}^{2 \xi} \big(\| \Delta \theta \| + \| \theta \| + \|\theta_\mathrm{b}\|_{H^\frac32(\partial\Omega)} \big)^{2-2\xi}
\| \Delta \vartheta\| \\
&\leqslant \frac{\underline{\kappa}}{6} \| \Delta \vartheta\|^2 + C.
\end{align*}	
Here, we have used the following interpolation inequality (see \cite[Lemma 2.2]{me})
\begin{align} \label{GiorginiHolder}
	\| \theta \|_{W^{1,4}(\Omega)} \leqslant C \| \theta \|_{C^{\beta}(\overline{\Omega})}^{\xi} \| \theta \|_{H^2(\Omega)}^{1 - \xi},\quad
	\text{for some } \xi \in \left(\frac{1}{2},1\right).
\end{align}
Finally, applying the Gagliardo--Nirenberg inequality, H\"{o}lder's inequality and Young's inequality, we have
\begin{align*}
- \int_{\Omega} \partial_1 \kappa (\nabla \phi \cdot \nabla \theta) \Delta \vartheta \dx
&\leqslant \|\partial_1 \kappa\|_{L^{\infty}(\Omega)} \| \nabla \phi \|_{L^4(\Omega)} \| \nabla \theta \|_{L^4(\Omega)} \| \Delta \vartheta \| \\
&\leqslant C \| \phi \|_{H^1(\Omega)}^{\frac12} \| \phi \|_{H^2(\Omega)}^{\frac12}
\|\theta\|_{C^{\beta}(\overline{\Omega})}^{\xi} \| \theta\|_{H^2(\Omega)}^{1-\xi} \| \Delta \vartheta\| \\
&\leqslant C \| \phi \|_{H^1(\Omega)}^{\frac12} \| \phi \|_{H^2(\Omega)}^{\frac12}
\|\theta\|_{C^{\beta}(\overline{\Omega})}^{\xi} \big(\| \Delta \theta\| + \| \theta \| +  \|\theta_\mathrm{b}\|_{H^\frac32(\partial\Omega)} \big)^{1-\xi} \| \Delta \vartheta\| \\
&\leqslant \frac{\underline{\kappa}}{6} \| \Delta \vartheta\|^2
+ C ( \| \phi \|_{H^2(\Omega)}^{\frac{1}{\xi}} + 1) \\
&\leqslant \frac{\underline{\kappa}}{6} \| \Delta \theta\|^2
+ C ( \| \phi \|_{H^2(\Omega)}^2 + 1).
\end{align*}
Combining the above estimates, from \eqref{2D-H1a} we infer that
\begin{align}
\frac{\d}{\dt} \| \nabla \vartheta \|^2 + \underline{\kappa} \| \Delta \vartheta \|^2
\leqslant C ( \| \nabla \u \|^2 + \| \phi \|_{H^2(\Omega)}^2 + 1 ).
\label{2D-H1b}
\end{align}
Using arguments similar to those for \eqref{2D-L2c}, \eqref{2D-L2d}, we can deduce from \eqref{2D-H1b}
that $$\vartheta \in L^{\infty} (0, \infty ; H^1(\Omega)) \cap L_{\mathrm{uloc}}^2([0, \infty) ; H^2(\Omega)\cap H^1_0(\Omega)).$$
Again, a comparison in equation \eqref{2D:vartheta} yields $\partial_t\vartheta\in L^2_{\mathrm{uloc}}([0,\infty);H^{-1}(\Omega))$. Using $\theta=\vartheta+\Theta_\mathrm{b} $, we arrive at the conclusion \eqref{theta_regularity}.
\smallskip

(iii) Now, we investigate the weak-strong uniqueness. The difference of solutions $\theta_1-\theta_2$ satisfies
\begin{equation}
\partial_t (\theta_1 - \theta_2) + \u \cdot \nabla (\theta_1 - \theta_2)
= \mathrm{div} \big(
( \kappa(\phi, \theta_1) - \kappa(\phi, \theta_2) ) \nabla \theta_1
+ \kappa(\phi, \theta_2) \nabla (\theta_1 - \theta_2 )
\big),
\label{eq-diff}
\end{equation}
in the sense of distributions. Since $\theta_1$ (resp. $\theta_2$) is a strong (resp. weak) solution, $\theta_1 - \theta_2$ admits the same regularity property as the weak solution $\theta_2$ and thus is valid as a test function.
Multiplying \eqref{eq-diff} by $\theta_1 - \theta_2$, integrating over $\Omega$, we get
\begin{align*}
\frac{1}{2} \frac{\d}{\dt} \| \theta_1 - \theta_2 \|^2
+ \int_{\Omega} \kappa(\phi, \theta_2) | \nabla(\theta_1 - \theta_2) |^2 \dx
= -\int_{\Omega} ( \kappa(\phi, \theta_1) - \kappa(\phi, \theta_2) ) \nabla \theta_1 \cdot \nabla(\theta_1 - \theta_2 ) \dx.
\end{align*}
Using the Gagliardo--Nirenberg inequality, the term on the right-hand side can be estimated as follows
\begin{align}
& -\int_{\Omega} ( \kappa(\phi, \theta_1) - \kappa(\phi, \theta_2) ) \nabla \theta_1 \cdot \nabla(\theta_1 - \theta_2 ) \dx \notag \\
&\quad \leqslant
\| \partial_2 \kappa \|_{L^{\infty}(\Omega)} \| \theta_1 - \theta_2 \|_{L^4(\Omega)} \| \nabla \theta_1 \|_{L^4(\Omega)} \| \nabla(\theta_1 - \theta_2) \| \notag \\
&\quad \leqslant
C \| \theta_1 - \theta_2 \|^{\frac12} \| \nabla (\theta_1 - \theta_2) \|^{\frac{3}{2}}
\| \theta_1 \|_{L^{\infty}(\Omega)}^{\frac12} \| \theta_1 \|_{H^2(\Omega)}^{\frac12}
\notag \\
&\quad \leqslant \frac{\underline{\kappa}}{2} \| \nabla(\theta_1 - \theta_2) \|^2
+ C \| \theta_1 \|_{H^2(\Omega)}^2 \| \theta_1 - \theta_2 \|^2. \label{theta-uniqueness}
\end{align}
This leads to the following inequality
\begin{align*}
\frac{\d}{\dt} \| \theta_1 - \theta_2 \|^2
+  \underline{\kappa} \| \nabla(\theta_1 - \theta_2) \|^2
\leqslant C \| \theta_1 \|_{H^2}^2 \| \theta_1 - \theta_2 \|^2.
\end{align*}
Since $\theta_1 \in L_{\mathrm{uloc}}^2([0,\infty);H^2(\Omega))$, we can apply Gronwall's Lemma to conclude that $\theta_1(t) = \theta_2(t)$ in $[0, T]$ for any $T>0$.

The proof of Proposition \ref{uphitotheta} is complete.
\end{proof}	

\subsection{Proof of Theorem \ref{weaksolution-2d}}
We are ready to prove Theorem \ref{weaksolution-2d}.
\smallskip

\textbf{Step 1. Existence.} Let $(\bm{u}^*, \phi^*, \mu^*, \theta^*)$ be a global weak solution to problem \eqref{NSCHM}--\eqref{initial} constructed in Theorem \ref{weaksolution-3d}. Under the additional assumption $\theta_0 \in C^{\gamma}(\overline{\Omega}) \cap H^1(\Omega)$ for some $\gamma \in (0,1)$, $\theta_0|_{\partial\Omega}=\theta_{\mathrm{b}} \in H^{\frac{3}{2}}(\partial \Omega)$, we consider the auxiliary problem \eqref{uphitotheta1}--\eqref{uphitotheta3} with the given data $\bm{u}=\bm{u}^*$, $\phi=\phi^*$. According to Proposition \ref{uphitotheta}-(ii), (iii), problem \eqref{uphitotheta1}--\eqref{uphitotheta3} admits a unique global strong solution $\theta$. Noticing that $\theta^*$ can be regarded as a global weak solution of problem \eqref{uphitotheta1}--\eqref{uphitotheta3} with the same structural data, then by the weak-strong uniqueness result Proposition \ref{uphitotheta}-(iii), we have $\theta^*(t)=\theta(t)$ for $t\geqslant 0$.

Therefore, under the additional assumption on $\theta_0$ mentioned above, $(\bm{u}^*, \phi^*, \mu^*, \theta^*)$ indeed gives a global weak solution to problem \eqref{NSCHM}--\eqref{initial} with improved regularity properties for the temperature as stated in Theorem \ref{weaksolution-2d}.
\smallskip

\textbf{Step 2. Uniqueness.}
In the case of unmatched densities $\rho_1\neq \rho_2$, the uniqueness of weak solutions is beyond reach even in the absence of temperature coupling (recall Remark \ref{uniqueness-AGG}). On the other hand, if the mobility and thermal diffusivity depend both on the order parameter and on the temperature, additional regularity assumptions on $\theta_1$ seem necessary to guarantee the uniqueness (see Remark \ref{weaksolution-2d'}), as will become apparent in the subsequent proof. Therefore, in the following, we focus on the special case such that
$$
\rho_1=\rho_2, \quad
m(\phi, \theta) \equiv m(\phi), \quad
\kappa(\phi, \theta) \equiv \kappa(\theta).
$$

Let $(\u_i,\phi_i,\mu_i,\theta_i)$, $i=1,2$, be two global weak solutions to problem \eqref{NSCHM}--\eqref{initial} constructed in Step 1 satisfying the same boundary and initial conditions. For simplicity, we denote their difference by
$$
\u = \u_1 - \u_2, \quad
\phi = \phi_1 - \phi_2, \quad
\mu = \mu_1 - \mu_2, \quad
\theta = \theta_1 - \theta_2.
$$

We first consider the equation satisfied by $\u$. For every $\v \in \V$, it holds
\begin{align}
&\left\langle \partial_t \u, \v\right\rangle_{\V}
+ \int_{\Omega} (\u_1 \cdot \nabla \u  + \u \cdot \nabla \u_2) \cdot \v \dx
+ \int_{\Omega} 2 \nu(\phi_1, \theta_1) D \u: D \v \dx \notag \\
&\qquad + \int_{\Omega} 2 (\nu(\phi_1, \theta_1) - \nu(\phi_2, \theta_2)) D \u_2: D \v \dx
\notag \\
&\quad = \int_{\Omega} \big(
\lambda(\theta_1) \nabla \phi_1 \otimes \nabla \phi_1 - \lambda(\theta_2) \nabla \phi_2 \otimes \nabla \phi_2 \big) : \nabla \v \dx
+ \int_{\Omega} (\bm{f}_{\mathrm{b}}(\phi_1, \theta_1) - \bm{f}_{\mathrm{b}}(\phi_2, \theta_2) ) \v \dx. \label{2D-dif-ua}
\end{align}
Taking $\v = \bm{S}^{-1} \u$ in \eqref{2D-dif-ua} yields
\begin{align}
\frac{1}{2} \frac{\d}{\dt}  \| \nabla (\bm{S}^{-1} \u) \|^2
+ \int_{\Omega} 2 \nu(\phi_1, \theta_1) D \u: D (\bm{S}^{-1} \u) \dx
= I_1 + I_2 + I_3 + I_4,
\label{2D-dif-u}
\end{align}
where
\begin{align*}
&I_1 = - \int_{\Omega} 2 (\nu(\phi_1, \theta_1) - \nu(\phi_2, \theta_2)) D \u_2:D (\bm{S}^{-1} \u) \dx, \\
&I_2 = - \big[ (\u_1 \otimes \u, \nabla (\bm{S}^{-1} \u) ) + (\u \otimes \u_2, \nabla (\bm{S}^{-1} \u) ) \big], \\
&I_3 = \int_{\Omega} \big(
\lambda(\theta_1) \nabla \phi_1 \otimes \nabla \phi_1 - \lambda(\theta_2) \nabla \phi_2 \otimes \nabla \phi_2 \big) : \nabla (\bm{S}^{-1} \u) \dx, \\
&I_4 = \int_{\Omega} (\bm{f}_{\mathrm{b}}(\phi_1, \theta_1) - \bm{f}_{\mathrm{b}}(\phi_2, \theta_2) ) \bm{S}^{-1} \u \dx.
\end{align*}
By the definition of the Stokes operator, we have $-\Delta(\bm{S}^{-1} \u) + \nabla \pi = \u$. Then it follows that
\begin{align*}
&\int_{\Omega} 2 \nu(\phi_1, \theta_1) D \u: D (\bm{S}^{-1} \u) \dx \\
&\quad = -2 \int_{\Omega} \mathrm{div} ( \nu(\phi_1,\theta_1) D (\bm{S}^{-1} \u) ) \cdot \u \dx \\
&\quad = - \int_{\Omega} \nu(\phi_1,\theta_1) (\Delta \bm{S}^{-1} \u) \cdot \u \dx
- 2 \int_{\Omega}  \big[D (\bm{S}^{-1} \u) (\partial_1 \nu \nabla \phi_1 + \partial_2 \nu \nabla \theta_1)\big]\cdot \u \dx \\
&\quad = \int_{\Omega} \nu(\phi_1,\theta_1) | \u |^2 \dx
- I_5 - I_6 - I_7,
\end{align*}
where
\begin{align*}
&I_5 = \int_{\Omega} \nu(\phi_1,\theta_1) \nabla \pi \cdot \u \dx, \\
&I_6 =  2 \int_{\Omega} \big[D (\bm{S}^{-1} \u) \partial_1 \nu \nabla \phi_1\big] \cdot \u \dx, \\
&I_7 =  2 \int_{\Omega} \big[ D (\bm{S}^{-1} \u) \partial_2 \nu \nabla \theta_1\big] \cdot \u \dx.
\end{align*}
Therefore, we arrive at
\begin{align*}
\frac{1}{2} \frac{\d}{\dt}  \| \nabla (\bm{S}^{-1} \u) \|^2
+ \int_{\Omega} \nu(\phi_1,\theta_1) | \u |^2 \dx
= \sum\limits_{k=1}^7 I_k.
\end{align*}
We write $I_1 = I_{1a} + I_{1b}$, where
\begin{align*}
&I_{1a} = - \int_{\Omega} 2 (\nu(\phi_1, \theta_1) - \nu(\phi_1, \theta_2)) D \u_2:D (\bm{S}^{-1} \u) \dx, \\
&I_{1b} = - \int_{\Omega} 2 (\nu(\phi_1, \theta_2) - \nu(\phi_2, \theta_2)) D \u_2:D (\bm{S}^{-1} \u) \dx.
\end{align*}
Using a similar reasoning as in \cite[Section 3]{me}, we have the following estimates
\begin{align*}
&I_{1a} \leqslant \frac{\underline{\nu}}{20} \| \u \|^2 + \frac{\underline{\kappa}}{10} \| \nabla \theta \|^2
+ C \| \nabla \u_2 \|^2 (\| \theta \|^2 + \| \nabla (\bm{S}^{-1} \u) \|^2), \\
&I_2 \leqslant \frac{\underline{\nu}}{20} \| \u \|^2 + C ( \| \u_1 \|_{\V}^2 + \| \u_2 \|_{\V}^2 )
\| \nabla (\bm{S}^{-1} \u) \|^2, \\
&I_3 \leqslant \frac{1}{40} \| \nabla \phi \|^2 + \frac{\underline{\kappa}}{10} \| \nabla \theta \|^2
+ C ( \| \nabla \phi_1 \|_{L^{\infty}(\Omega)}^2 + \| \nabla \phi_2 \|_{L^{\infty}(\Omega)}^2 + \| \phi_1 \|_{H^2(\Omega)}^2 )
\| \nabla (\bm{S}^{-1} \u) \|^2, \\
&I_4 \leqslant C \| \theta \|^2 + \| \nabla (\bm{S}^{-1} \u) \|^2, \\
&I_7 \leqslant \frac{\underline{\nu}}{20} \| \u \|^2 + C \| \theta_1 \|_{H^2}^2 \| \nabla (\bm{S}^{-1} \u) \|^2.
\end{align*}
Let us recall the $L^2$-product estimate given in \cite[Proposition C.1]{Giorgini2019}:
$$
\| \phi D (\bm{S}^{-1} \u) \| \leqslant C \| \phi \|_{H^1(\Omega)} \| \nabla (\bm{S}^{-1} \u) \| \ln^{\frac12}\left( e \frac{\| \nabla (\bm{S}^{-1} \u) \|_{H^1(\Omega)}}{\| \nabla (\bm{S}^{-1} \u) \|} \right).
$$
Using the Poincar\'{e}--Wirtinger inequality, Poincar\'{e}'s inequality and Young's inequality, we can deduce that
\begin{align*}
I_{1b} &\leqslant 2 \|\partial_1\nu\|_{L^\infty(\Omega)} \| D \u_2 \| \| \phi D (\bm{S}^{-1} \u) \|  \\
&\leqslant C \| \nabla \u_2 \| \| \phi \|_{H^1(\Omega)} \| \nabla (\bm{S}^{-1} \u) \| \ln^{\frac12}\left( e \frac{\| \nabla (\bm{S}^{-1} \u) \|_{H^1(\Omega)}}{\| \nabla (\bm{S}^{-1} \u) \|} \right) \\
&\leqslant \frac{1}{40} \| \nabla \phi \|^2
+ C \| \nabla \u_2 \|^2
\mathcal{Y}(t) \ln \left( \frac{C}{\mathcal{Y}(t)} \right),
\end{align*}
where
\begin{align}
\mathcal{Y}(t)= \| \nabla (\bm{S}^{-1} \u)(t) \|^2 + \int_{\Omega} m(\phi_1(t)) | \nabla \G_{\phi_1} \phi (t)|^2 \dx
+ \| \theta(t)\|^2.
\label{2D-dif-Y}
\end{align}
In the above estimate for $I_{1b}$, we have used the fact that $\mathcal{Y}(t) \in L^\infty(0,\infty)$ and the function $s\ln\left(\frac{C}{s}\right)$ is increasing for $s\in (0,L]$ if $C\geqslant eL$. As far as the term $I_5$ is concerned, using the Gagliardo--Nirenberg inequality and the $L^4$-estimate for the pressure $\pi$ (see \cite[Lemma 3.1]{pressure}), we have
\begin{align*}
I_5 &= - \int_{\Omega} (\partial_1 \nu \nabla \phi_1 + \partial_2 \nu \nabla \theta_1 ) \cdot \u \pi
\dx \\
&\leqslant ( \| \partial_1 \nu\|_{L^{\infty}(\Omega)} \| \nabla \phi_1 \|_{L^4(\Omega)}
+ \| \partial_2 \nu\|_{L^{\infty}(\Omega)} \| \nabla \theta_1 \|_{L^4(\Omega)} )
\| \u \| \| \pi \|_{L^4(\Omega)} \notag \\
&\leqslant C (\| \phi_1\|_{L^{\infty}(\Omega)}^{\frac{1}{2}} \| \phi_1 \|_{H^2(\Omega)}^{\frac{1}{2}} +
\| \theta_1\|_{L^{\infty}(\Omega)}^{\frac{1}{2}} \| \theta_1 \|_{H^2(\Omega)}^{\frac{1}{2}}  )
\| \u \|^{\frac{3}{2}} \| \nabla \bm{S}^{-1} \u \|^{\frac{1}{2}} \notag \\
&\leqslant \frac{\underline{\nu}}{20} \| \u \|^2 +
C ( \| \phi_1\|_{H^2(\Omega)}^2 + \| \theta_1\|_{H^2(\Omega)}^2 ) \| \nabla \bm{S}^{-1} \u\|^2.
\end{align*}
Finally, the term $I_6$ can be estimated as in \cite{Giorgini2019}:
\begin{align*}
I_6 &\leqslant C \| \partial_1 \nu \|_{L^{\infty}} \| \nabla \phi_1 \|_{L^4(\Omega)} \| D \bm{S}^{-1} \u \|_{L^4(\Omega)} \| \u \| \\
&\leqslant C  \| \phi_1 \|_{L^{\infty}(\Omega)}^{\frac{1}{2}} \| \phi_1 \|_{H^2(\Omega)}^{\frac{1}{2}}\| \nabla \bm{S}^{-1} \u \|^{\frac12} \| \u \|^{\frac32} \\
&\leqslant \frac{\underline{\nu}}{20} \| \u \|^2 + C \| \phi_1\|_{H^2(\Omega)}^2 \| \nabla \bm{S}^{-1} \u\|^2.
\end{align*}

Next, we consider the equations for $(\phi, \mu)$,  which read as follows
\begin{align}
& \partial_t \phi + \u_1 \cdot \nabla \phi + \u \cdot \nabla \phi_2 =
\mathrm{div} (m(\phi_1) \nabla \mu)
+ \mathrm{div} [(m(\phi_1) - m(\phi_2)) \nabla \mu_2], \label{2D-dif-p} \\
&\mu  = -\Delta \phi + W^{\prime}(\phi_1) - W^{\prime}(\phi_2). \label{2D-dif-m}
\end{align}
Multiply \eqref{2D-dif-p} by $\G_{\phi_1} \phi$ and then integrate over $\Omega$. Recalling the definition of $\G_{\phi_1}$ and the fact $W^{\prime \prime} \geqslant -c_W$, we have
\begin{align*}
-( \mathrm{div} (m(\phi_1) \nabla \mu), \G_{\phi_1} \phi)
	&= (\nabla \mu, m(\phi_1) \nabla \G_{\phi_1} \phi) \notag \\
	&= (\mu, \phi) \notag \\
    & \geqslant \| \nabla \phi \|^2 - c_W\|\phi\|^2\\
	&\geqslant \frac{1}{2} \| \nabla \phi \|^2 - \frac{c_W^2}{2} \| \phi \|_{V_{(0)}^{\prime}}^2.
\end{align*}
This implies that
\begin{align}
(\partial_t \phi , \G_{\phi_1} \phi) + \frac{1}{2} \| \nabla \phi \|^2 & \leqslant
\frac{c_W^2}{2} \| \phi \|_{V_{(0)}^{\prime}}^2
+ (\phi \u_1, \nabla \G_{\phi_1} \phi)
+ (\phi_2 \u, \nabla \G_{\phi_1} \phi)  \notag \\
&\quad - \big((m(\phi_1) - m(\phi_2)) \nabla \mu_2 , \nabla \G_{\phi_1} \phi\big).
\label{2D-dif-phi}
\end{align}
Using the Gagliardo--Nirenberg inequality, the Sobolev embedding theorem, Poincar\'{e}'s inequality and Young's inequality, we get
\begin{align*}
- ((m(\phi_1) - m(\phi_2)) \nabla \mu_2 , \nabla \G_{\phi_1} \phi) &\leqslant
\|m'\|_{L^\infty(\Omega)} \| \phi \|_{L^4(\Omega)} \| \nabla \mu_2 \| \| \nabla \G_{\phi_1} \phi \|_{L^4(\Omega)} \\
&\leqslant C\| \nabla \mu_2 \| \| \nabla \G_{\phi_1} \phi \|^{\frac12}\|\phi\| \| \nabla \phi \|^{\frac12}
\\
&\leqslant  C\| \nabla \mu_2 \|  \| \nabla \G_{\phi_1} \phi \| \| \nabla \phi \| \\
&\leqslant \frac{1}{40} \| \nabla \phi \|^2 + C\| \nabla \mu_2 \|^2 \| \nabla \G_{\phi_1} \phi \|^2,
\end{align*}
and
\begin{align*}
(\phi \u_1, \nabla \G_{\phi_1} \phi) &\leqslant \| \phi \|_{L^4(\Omega)} \| \u_1 \|_{L^4(\Omega)} \| \nabla \G_{\phi_1} \phi \| \\
&\leqslant \frac{1}{40} \| \nabla \phi \|^2 + C \| \u_1 \|_{L^4(\Omega)}^2 \| \nabla \G_{\phi_1} \phi \|^2, \\
(\phi_2 \u, \nabla \G_{\phi_1} \phi) &\leqslant \| \phi_2 \|_{L^{\infty}(\Omega)} \| \u \| \| \nabla \G_{\phi_1} \phi \| \\
&\leqslant \frac{\underline{\nu}}{20} \| \u \|^2 + C \| \nabla \G_{\phi_1} \phi \|^2.
\end{align*}
The term $(\partial_t \phi , \G_{\phi_1} \phi)$ can be treated as in \cite{New_Results_Mobility} such that
\begin{align}
(\partial_t \phi , \G_{\phi_1} \phi) &=
\frac{\d}{\dt} \frac{1}{2} \int_{\Omega} m(\phi_1) | \nabla \G_{\phi_1} \phi |^2 \dx
+ \frac{1}{2} \int_{\Omega} \nabla \G \partial_t{\phi_1} \cdot m^{\prime \prime}(\phi_1) \nabla \phi_1 |\nabla \G_{\phi_1} \phi |^2 \dx \notag \\
&\quad + \int_{\Omega} \nabla \G \partial_t{\phi_1} \cdot m^{\prime}(\phi_1) (\nabla^2 \G_{\phi_1} \phi \nabla \G_{\phi_1} \phi) \dx,
\label{transfer-phi-1}
\end{align}
with the following estimates (see \cite[(3.47), (3.50)]{New_Results_Mobility})
\begin{align*}
\left|\frac{1}{2} \int_{\Omega} \nabla \G \partial_t{\phi_1} \cdot m^{\prime \prime}(\phi_1) \nabla \phi_1 |\nabla \G_{\phi_1} \phi |^2 \dx\right|
\leqslant
\frac{1}{40} \| \nabla \phi \|^2
+ C \left( \| \nabla \G \partial_t{\phi_1} \|^2 + \| \phi_1 \|_{H^2(\Omega)}^4 \right)
\| \nabla \G_{\phi_1} \phi \|^2,
\end{align*}
\begin{align*}
\left|\int_{\Omega} \nabla \G \partial_t{\phi_1} \cdot m^{\prime}(\phi_1) (\nabla^2 \G_{\phi_1} \phi \nabla \G_{\phi_1} \phi) \dx\right|
\leqslant
\frac{1}{40} \| \nabla \phi \|^2
+ C \left( \| \nabla \G \partial_t{\phi_1} \|^2 + \| \phi_1 \|_{H^2(\Omega)}^4 \right)
\| \nabla \G_{\phi_1} \phi \|^2.
\end{align*}
\medskip

Finally, we investigate the equation for $\theta$, that is,
\begin{equation}
\partial_t \theta + \u_1 \cdot \nabla \theta + \u \cdot \nabla \theta_2
= \mathrm{div} \big(
( \kappa(\theta_1) - \kappa(\theta_2) ) \nabla \theta_1
+ \kappa(\theta_2) \nabla \theta
\big).\label{2D-dif-the}
\end{equation}
Multiplying \eqref{2D-dif-the} by $\theta$ and integrating over $\Omega$, we have
\begin{align}
\frac{1}{2} \frac{\d}{\dt} \| \theta \|^2
+ \int_{\Omega} \kappa(\theta_2) | \nabla \theta |^2 \dx
= -\int_{\Omega} (\u \cdot \nabla \theta_2) \theta \dx
-\int_{\Omega} ( \kappa(\theta_1) - \kappa(\theta_2) ) \nabla \theta_1 \cdot \nabla \theta \dx.
\label{2D-dif-theb}
\end{align}
The second term on the right-hand side can be estimated in the same way as in \eqref{theta-uniqueness}:
\begin{align*}
-\int_{\Omega} ( \kappa(\theta_1) - \kappa(\theta_2) ) \nabla \theta_1 \cdot \nabla \theta \dx
\leqslant \frac{\underline{\kappa}}{10} \| \nabla \theta \|^2 + C \| \theta_1 \|_{H^2(\Omega)}^2 \| \theta \|^2.
\end{align*}
Concerning the first term on the right-hand side, we apply the Gagliardo--Nirenberg inequality, H\"{o}lder's inequality and Young's inequality to obtain
\begin{align*}
-\int_{\Omega} (\u \cdot \nabla \theta_2) \theta \dx &\leqslant
\| \u \| \| \nabla \theta_2 \|_{L^4(\Omega)} \| \theta \|_{L^4(\Omega)} \\
&\leqslant \frac{\underline{\nu}}{20} \| \u \|^2
+ C \| \nabla \theta_2 \| \| \theta_2 \|_{H^2(\Omega)} \| \theta \| \| \nabla \theta \| \\
&\leqslant \frac{\underline{\nu}}{20} \| \u \|^2 + \frac{\underline{\kappa}}{10} \| \nabla \theta \|^2
+ C \| \theta_2 \|_{H^2(\Omega)}^2 \| \theta \|^2.
\end{align*}

Combining all the above estimates, we deduce from \eqref{2D-dif-u}, \eqref{2D-dif-phi} and \eqref{2D-dif-theb} that
\begin{align}
&\frac{\d}{\dt} \mathcal{Y}(t)
+ \underline{\nu}\| \u \|^2
+ \frac{1}{2} \| \nabla \phi \|^2
+ \underline{\kappa} \| \nabla \theta \|^2
 \leqslant C\mathcal{H}(t) \mathcal{Y}(t)
\ln \left( \frac{C}{\mathcal{Y}(t)} \right),
\label{2D-Diff}
\end{align}
where $C>0$ is sufficiently large, $\mathcal{Y}(t)$ is defined as in \eqref{2D-dif-Y} and
\begin{align*}
\mathcal{H}(t) & = 1 + \| \u_1(t) \|_{\V}^2
+ \| \u_2(t) \|_{\V}^2 + \| \phi_1(t) \|_{W^{2,3}(\Omega)}^2
+ \| \phi_2(t) \|_{W^{2,3}(\Omega)}^2
+ \| \phi_1(t) \|_{H^2(\Omega)}^4 \\
&\quad + \|\nabla \mu_2(t) \|^2
 + \| \nabla \G \partial_t{\phi_1}(t) \|^2
 + \| \theta_1(t) \|_{H^2(\Omega)}^2
 + \| \theta_2(t) \|_{H^2(\Omega)}^2.
\end{align*}
Noticing that $\mathcal{H}(t) \in L^1(0,T)$ for any $T>0$, applying Osgood's Lemma (see \cite[Lemma 3.4]{Osgood}), we can deduce from \eqref{2D-Diff} that, if $\mathcal{Y}(0)=0$, then $\mathcal{Y}(t) =0$ for all $t\in [0,T]$. Since $T>0$ is arbitrary, we thus prove the uniqueness of global weak solutions on $[0,\infty)$.

The proof of Theorem \ref{weaksolution-2d} is complete. \qed

\begin{remark} \rm
In the more general case where $m$, $\kappa$ depend on both $\phi$ and $\theta$, additional difficulties may occur. For example, the identity \eqref{transfer-phi-1} becomes (at least formally)
\begin{align*}
	\big(\partial_t \phi , \G_{(\phi_1, \theta_1)} \phi\big)
    &=
	\frac{\d}{\dt} \frac{1}{2} \int_{\Omega} m(\phi_1, \theta_1) | \nabla \G_{(\phi_1, \theta_1)} \phi |^2 \dx
	+ \frac{1}{2} \int_{\Omega} \nabla \G \partial_t{\phi_1} \cdot \partial_1^2 m \nabla \phi_1
	|\nabla \G_{(\phi_1, \theta_1)} \phi |^2 \dx \\
	&\quad + \int_{\Omega} \nabla \G \partial_t{\phi_1} \cdot \partial_1 m (\nabla^2 \G_{(\phi_1, \theta_1)} \phi \nabla \G_{(\phi_1, \theta_1)} \phi) \dx
	+ \frac{1}{2} \int_{\Omega} \partial_2 m \partial_t \theta_1 |\nabla \G_{(\phi_1, \theta_1)} \phi |^2 \dx \\
	&\quad + \frac{1}{2} \int_{\Omega} \nabla \G \partial_t{\phi_1} \cdot \partial_{1} \partial_2 m \nabla \theta_1 |\nabla \G_{(\phi_1, \theta_1)} \phi |^2 \dx .
\end{align*}
Here, the notation $\G_{(\phi_1, \theta_1)}$ is defined analogously to \eqref{Pre-Gq}, with the function $q$ replaced by a pair of functions $(p, q)$. The second and third terms on the right-hand side have been analyzed as above. The fourth term on the right-hand side can be handled via the improved regularity $\partial_t \theta_1 \in L^2_{\mathrm{uloc}}([0,\infty);L^2(\Omega))$ and the Gagliardo--Nirenberg inequality:
\begin{align*}
\frac{1}{2} \int_{\Omega} \partial_2 m \partial_t \theta_1 |\nabla \G_{(\phi_1, \theta_1)} \phi |^2 \dx
\leqslant \frac{1}{40} \| \nabla \phi \|^2
+ C \big(1+\| \partial_t \theta_1 \|^2\big) \| \nabla \G_{(\phi_1, \theta_1)} \phi \|^2.
\end{align*}
However, the last term presents some challenges, necessitating the additional assumption posed in Remark \ref{weaksolution-2d'}. In particular, this term is structurally analogous to the second term, whose estimation requires $\phi_1 \in L^4_\mathrm{uloc}([0,\infty);H^2(\Omega))$.
\end{remark}

\section*{Declarations}
\noindent
\textbf{Conflict of interest.} The authors have no competing interests to declare that are relevant to the content of this article.
\smallskip
\\
\noindent
\textbf{Funding.} The research of H. Wu was partially supported by Natural Science Foundation of Shanghai (Grant number 25ZR1401023).
\smallskip
\\
\noindent
\textbf{Acknowledgments.}
The authors are grateful to the reviewer for several valuable comments. Part of the work was done during H. Wu's participation in the Thematic Program on Free Boundary Problems at the Erwin Schr\"{o}dinger International Institute for Mathematics and Physics (ESI), whose hospitality is gratefully acknowledged. H. Wu is a member of the Key Laboratory of Mathematics for Nonlinear Sciences (Fudan University), Ministry of Education of China.



\begin{thebibliography}{99}

	\bibitem{Abels2009}
	H. Abels,
	{\it On a diffuse interface model for two-phase flows of viscous, incompressible fluids with matched densities,}
	Arch. Ration. Mech. Anal., 194 (2009), 463--506.
	
	\bibitem{JMFM}
	H. Abels, D. Depner, and H. Garcke,
	{\it Existence of weak solutions for a diffuse interface model for two-phase flows of incompressible fluids with different densities,}
	J. Math. Fluid Mech., 15 (2013), 453--480.
	
	\bibitem{AGG-P}
	H. Abels, D. Depner, and H. Garcke,
	{\it On an incompressible Navier--Stokes/Cahn--Hilliard system with degenerate mobility,}
	Ann. Inst. H. Poincar\'e C Anal. Non Lin\'eaire, 30 (2013), 1175--1190.
	
	\bibitem{longtime_ann}
	H. Abels, H. Garcke, and A. Giorgini,
	{\it Global regularity and asymptotic stabilization for the incompressible Navier--Stokes--Cahn--Hilliard model with unmatched densities,}
	Math. Ann., 389 (2024), 1267--1321.
	
	\bibitem{AGGmodel}
	H. Abels, H. Garcke, and G. Gr\"un,
	{\it Thermodynamically consistent, frame indifferent diffuse interface models for incompressible two-phase flows with different densities,}
	Math. Models Methods Appl. Sci., 22 (2012), 1150013.
	
	\bibitem{mobility-multi-phase}
	H. Abels, H. Garcke, and A. Poiatti,
	{\it Mathematical analysis of a diffuse interface model for multi-phase flows of incompressible viscous fluids with different densities,}
	J. Math. Fluid Mech., 26 (2024), 29.
	
	\bibitem{Abels2025}
	H. Abels, A. Marveggio, and A. Poiatti,
	{\it Well-posedness and sharp interface limit of a non-isothermal Navier--Stokes/Allen--Cahn model,}
	Math. Models Methods Appl. Sci., to appear, arXiv:2511.11892, 2025.
	
	\bibitem{AW21}
	H. Abels and J. Weber,
	{\it Local well-posedness of a quasi-incompressible two-phase flow,}
	J. Evol. Equ., 21 (2021), 3477--3502.
	
	\bibitem{Abels2007}
	H. Abels and M. Wilke,
	{\it Convergence to equilibrium for the Cahn--Hilliard equation with a logarithmic free energy,}
	Nonlinear Anal., 67 (2007), 3176--3193.
	
	\bibitem{Amann}
	H. Amann,
	{\it Linear and Quasilinear Parabolic Problems. Vol. I. Abstract Linear Theory,}
	Monogr. Math., 89, Birkh\"auser Boston, Boston, MA, 1995.
	
	\bibitem{phasefield1}
	D. M. Anderson, G. B. McFadden, and A. A. Wheeler,
	{\it Diffuse-interface methods in fluid mechanics,}
	Annu. Rev. Fluid Mech., 30 (1998), 139--165.
	
	\bibitem{Osgood}
	H. Bahouri, J.-Y. Chemin, and R. Danchin,
	{\it Fourier Analysis and Nonlinear Partial Differential Equations,}
	Springer-Verlag, Heidelberg, 2011.
	
	\bibitem{Benard1901}
	H. B\'enard,
	{\it Les tourbillons cellulaires dans une nappe liquide propageant de la chaleur par convection: en r\'egime permanent,}
	Gauthier-Villars, Paris, 1901.
	
	\bibitem{phasefield3}
	J. L. Boldrini,
	{\it Phase field: a methodology to model complex material behavior,}
	in {\it Advances in Mathematics and Applications}, Springer, Cham, 2018, 67--103.
	
	\bibitem{BF2013}
	F. Boyer and P. Fabrie,
	{\it Mathematical Tools for the Study of the Incompressible Navier--Stokes Equations and Related Models,}
	Appl. Math. Sci., 183, Springer, New York, 2013.
	
	\bibitem{sharp2}
	G. Caginalp and W. Xie,
	{\it Phase-field and sharp-interface alloy models,}
	Phys. Rev. E, 48 (1993), 1897--1909.
	
	\bibitem{Cahn1958}
	J. W. Cahn and J. E. Hilliard,
	{\it Free energy of a nonuniform system. I. Interfacial free energy,}
	J. Chem. Phys., 28 (1958), 258--267.
	
	\bibitem{Charles1960}
	G. E. Charles and S. G. Mason,
	{\it The coalescence of liquid drops with flat liquid/liquid interfaces,}
	J. Colloid Sci., 15 (1960), 236--267.
	
	\bibitem{me}
	L.-X. Chen,
	{\it Global well-posedness for a two-dimensional Navier--Stokes--Cahn--Hilliard--Boussinesq system with singular potential,}
	Commun. Math. Sci., 23 (2025), 509--540.
	
	\bibitem{New_Results_Mobility}
	M. Conti, P. Galimberti, S. Gatti, and A. Giorgini,
	{\it New results for the Cahn--Hilliard equation with non-degenerate mobility: well-posedness and long-time behavior,}
	Calc. Var. Partial Differential Equations, 64 (2025), 87.
	
	\bibitem{Conti2020}
	M. Conti and A. Giorgini,
	{\it Well-posedness for the Brinkman--Cahn--Hilliard system with unmatched viscosities,}
	J. Differential Equations, 268 (2020), 6350--6384.
	
	\bibitem{Liuextend}
	F. De Anna, C. Liu, A. Schl\"omerkemper, and J.-E. Sulzbach,
	{\it Temperature dependent extensions of the Cahn--Hilliard equation,}
	Nonlinear Anal. Real World Appl., 77 (2024), 104056.
	
	\bibitem{phasefield4}
	Q. Du and X.-B. Feng,
	{\it The phase field method for geometric moving interfaces and their numerical approximations,}
	Handb. Numer. Anal., 21 (2020), 425--508.
	
	\bibitem{Eggers1997}
	J. Eggers,
	{\it Nonlinear dynamics and breakup of free-surface flows,}
	Rev. Modern Phys., 69 (1997), 865--939.
	
	\bibitem{phasefield2}
	P. C. Fife,
	{\it Models for phase separation and their mathematics,}
	Electron. J. Differential Equations, 2000 (2000), 1--26.
	
	\bibitem{pressure}
	C. G. Gal, A. Giorgini, M. Grasselli, and A. Poiatti,
	{\it Global well-posedness and convergence to equilibrium for the Abels--Garcke--Gr\"un model with nonlocal free energy,}
	J. Math. Pures Appl. (9), 178 (2023), 46--109.
	
	\bibitem{GGP}
	C. Giorgi, M. Grasselli, and V. Pata,
	{\it Uniform attractors for a phase-field model with memory and quadratic nonlinearity,}
	Indiana Univ. Math. J., 48 (1999), 1395--1445.
	
	\bibitem{AGG2dstrong}
	A. Giorgini,
	{\it Well-posedness of the two-dimensional Abels--Garcke--Gr\"un model for two-phase flows with unmatched densities,}
	Calc. Var. Partial Differential Equations, 60 (2021), 100.
	
	\bibitem{AGG3dstrong}
	A. Giorgini,
	{\it Existence and stability of strong solutions to the Abels--Garcke--Gr\"un model in three dimensions,}
	Interfaces Free Bound., 24 (2022), 565--608.
	
	\bibitem{HwuHeleshaw}
	A. Giorgini, M. Grasselli, and H. Wu,
	{\it The Cahn--Hilliard--Hele--Shaw system with singular potential,}
	Ann. Inst. H. Poincar\'e C Anal. Non Lin\'eaire, 35 (2018), 1079--1118.
	
	\bibitem{Giorgini2019}
	A. Giorgini, A. Miranville, and R. Temam,
	{\it Uniqueness and regularity for the Navier--Stokes--Cahn--Hilliard system,}
	SIAM J. Math. Anal., 51 (2019), 2535--2574.
	
	\bibitem{MBoussinesq}
	M. Grasselli and A. Poiatti,
	{\it The Cahn--Hilliard--Boussinesq system with singular potential,}
	Commun. Math. Sci., 20 (2022), 897--946.
	
	\bibitem{GP2025}
	M. Grasselli and A. Poiatti,
	{\it Convergence to equilibrium of weak solutions to the Cahn--Hilliard equation with non-degenerate mobility and singular potential,}
	preprint, 2025. arXiv:2510.17296.
	
	\bibitem{GL2015}
	Z. Guo and P. Lin,
	{\it A thermodynamically consistent phase-field model for two-phase flows with thermocapillary effects,}
	J. Fluid Mech., 766 (2015), 226--271.
	
	\bibitem{EnVarA2}
	Z. Guo, P. Lin, and Y. Wang,
	{\it Continuous finite element schemes for a phase field model in two-layer fluid B\'enard--Marangoni convection computations,}
	Comput. Phys. Commun., 185 (2014), 63--78.
	
	\bibitem{Sobolev-manifold}
	E. Hebey,
	{\it Sobolev Spaces on Riemannian Manifolds,}
	Lecture Notes in Math., 1635, Springer-Verlag, Berlin, 1996.
	
	\bibitem{Marangoni-coffee}
	H. Hu and R. G. Larson,
	{\it Marangoni effect reverses coffee-ring depositions,}
	J. Phys. Chem. B, 110 (2006), 7090--7094.
	
	\bibitem{Marangoni_review}
	D. Johnson and R. Narayanan,
	{\it A tutorial on the Rayleigh--Marangoni--B\'enard problem with multiple layers and side-wall effects,}
	Chaos, 9 (1999), 124--140.
	
	\bibitem{viscosity-phi}
	J. S. Kim,
	{\it Phase-field models for multi-component fluid flows,}
	Commun. Comput. Phys., 12 (2012), 613--661.
	
	\bibitem{Marangoni_electron}
	P. Lee, P. Quested, and M. McLean,
	{\it Modelling of Marangoni effects in electron beam melting,}
	Philos. Trans. Roy. Soc. A, 356 (1998), 1027--1043.
	
	\bibitem{lopes18}
	J. H. Lopes and G. Planas,
	{\it Well-posedness for a non-isothermal flow of two viscous incompressible fluids,}
	Commun. Pure Appl. Anal., 17 (2018), 2455--2477.
	
	\bibitem{lopes-AC}
	J. H. Lopes and G. Planas,
	{\it On a non-isothermal incompressible Navier--Stokes--Allen--Cahn system,}
	Monatsh. Math., 195 (2021), 687--715.
	
	\bibitem{lopes}
	J. H. Lopes and G. Planas,
	{\it Existence of solutions for a non-isothermal Navier--Stokes--Allen--Cahn system with thermo-induced coefficients,}
	Electron. J. Differential Equations, 2022 (2022), 1--22.
	
	\bibitem{LB96}
	S. A. Lorca and J. L. Boldrini,
	{\it Stationary solutions for generalized Boussinesq models,}
	J. Differential Equations, 124 (1996), 389--406.
	
	\bibitem{LB99}
	S. A. Lorca and J. L. Boldrini,
	{\it The initial value problem for a generalized Boussinesq model,}
	Nonlinear Anal., 36 (1999), 457--480.
	
	\bibitem{Marangoni}
	C. Marangoni,
	{\it Sull'espansione delle goccie d'un liquido galleggianti sulla superfice di altro liquido,}
	Fratelli Fusi, 1865.
	
	\bibitem{Marangoni_welding}
	K. Mills, B. Keene, R. Brooks, and A. Shirali,
	{\it Marangoni effects in welding,}
	Philos. Trans. Roy. Soc. A, 356 (1998), 911--925.
	
	\bibitem{inequality}
	A. Miranville and S. Zelik,
	{\it Robust exponential attractors for Cahn--Hilliard type equations with singular potentials,}
	Math. Methods Appl. Sci., 27 (2004), 545--582.
	
	\bibitem{NB20}
	B. A. Nerger, P.-T. Brun, and C. M. Nelson,
	{\it Marangoni flows drive the alignment of fibrillar cell-laden hydrogels,}
	Sci. Adv., 6 (2020), eaaz7748.
	
	\bibitem{Oberbeck}
	G. Peralta,
	{\it Weak and very weak solutions to the viscous Cahn--Hilliard--Oberbeck--Boussinesq phase-field system on two-dimensional bounded domains,}
	J. Evol. Equ., 22 (2022), 12.
	
	\bibitem{Marangoni_crystal}
	A. Pimpinelli and J. Villain,
	{\it Physics of Crystal Growth,}
	Cambridge Univ. Press, Cambridge, 1999.
	
	\bibitem{Roubicek2005}
	T. Roub\'{\i}$\check{\mathrm{c}}$ek,
	{\it Nonlinear Partial Differential Equations with Applications,}
	Birkh\"auser, Basel, 2005.
	
	\bibitem{sharp1}
	L. Rubinshtein,
	{\it The Stefan Problem,}
	Amer. Math. Soc., Providence, RI, 1971.
	
	\bibitem{Sohr}
	H. Sohr,
	{\it The Navier--Stokes Equations: An Elementary Functional Analytic Approach,}
	Birkh\"auser, Basel, 2012.
	
	\bibitem{Marangoni_turbulence}
	C. V. Sternling and L. Scriven,
	{\it Interfacial turbulence: hydrodynamic instability and the Marangoni effect,}
	AIChE J., 5 (1959), 514--523.
	
	\bibitem{Wang2020}
	S. Sun, J. Li, J. Zhao, and Q. Wang,
	{\it Structure-preserving numerical approximations to a non-isothermal hydrodynamic model of binary fluid flows,}
	J. Sci. Comput., 83 (2020), 50.
	
	\bibitem{EnVarA1}
	P. Sun, C. Liu, and J. Xu,
	{\it Phase field model of thermo-induced Marangoni effects in mixtures and its numerical simulations with a mixed finite element method,}
	Commun. Comput. Phys., 6 (2009), 1095--1119.
	
	\bibitem{Sun2024}
	Y. Sun, J. Wu, M. Jiang, S. M. Wise, and Z. Guo,
	{\it A thermodynamically consistent phase-field model and an entropy stable numerical method for simulating two-phase flows with thermocapillary effects,}
	Appl. Numer. Math., 206 (2024), 161--189.
	
	\bibitem{ZZF2013}
	Y. Sun and Z. Zhang,
	{\it Global regularity for the initial-boundary value problem of the 2D Boussinesq system with variable viscosity and thermal diffusivity,}
	J. Differential Equations, 255 (2013), 1069--1085.
	
	\bibitem{Marangoni_interface}
	M. G. Velarde and R. K. Zeytounian,
	{\it Interfacial Phenomena and the Marangoni Effect,}
	Springer-Verlag, Vienna/New York, 2002.
	
	\bibitem{Wangxiaoming}
	X.-M. Wang and H. Wu,
	{\it Global weak solutions to the Navier--Stokes--Darcy--Boussinesq system for thermal convection in coupled free and porous media flows,}
	Adv. Differential Equations, 26 (2021), 1--44.
	
	\bibitem{HW2017}
	H. Wu,
	{\it Well-posedness of a diffuse-interface model for two-phase incompressible flows with thermo-induced Marangoni effect,}
	European J. Appl. Math., 28 (2017), 380--434.
	
	\bibitem{CHreview}
	H. Wu,
	{\it A review on the Cahn--Hilliard equation: classical results and recent advances in dynamic boundary conditions,}
	Electron. Res. Arch., 30 (2022), 2788--2832.
	
	\bibitem{HWUXX2013}
	H. Wu and X. Xu,
	{\it Analysis of a diffuse-interface model for binary viscous incompressible fluids with thermo-induced Marangoni effects,}
	Commun. Math. Sci., 11 (2013), 603--633.
	
	\bibitem{Ze92}
	E. Zeidler,
	{\it Nonlinear Functional Analysis and Its Applications I,}
	Springer, New York, 1992.
	
	\bibitem{Zhao_regularity}
	K. Zhao,
	{\it Global regularity for a coupled Cahn--Hilliard--Boussinesq system on bounded domains,}
	Quart. Appl. Math., 69 (2011), 331--356.
	
	\bibitem{Zhao_longtime1}
	K. Zhao,
	{\it Large time behavior of a Cahn--Hilliard--Boussinesq system on a bounded domain,}
	Electron. J. Differential Equations, 2011 (2011), 1--21.
	
	\bibitem{Zhao_longtime2}
	K. Zhao,
	{\it Long-time dynamics of a coupled Cahn--Hilliard--Boussinesq system,}
	Commun. Math. Sci., 10 (2012), 735--749.
	
\end{thebibliography}
\end{document}